\numberwithin{equation}{subsection}
\theoremstyle{plain}
\newtheorem{thm}[equation]{Theorem}
\newtheorem{prop}[equation]{Proposition}
\newtheorem{lem}[equation]{Lemma}
\newtheorem{cor}[equation]{Corollary}
\newtheorem{ques}[equation]{Question}
\newtheorem*{cor*}{Corollary}
\newtheorem*{prob*}{Problem}
\newtheorem*{thm*}{Theorem}
\newtheorem*{thma*}{Theorem A}
\newtheorem*{thmb*}{Theorem B}
\newtheorem*{thmc*}{Theorem C}
\theoremstyle{definition}
\newtheorem{defn}[equation]{Definition}
\newtheorem{algm}[equation]{Method}
\theoremstyle{remark}
\newtheorem{exm}[equation]{Example}
\newtheorem{rmk}[equation]{Remark}
\newenvironment{enumalph}
{\begin{enumerate}}
{\end{enumerate}}
\newenvironment{enumalg}
{\begin{enumerate}}
{\end{enumerate}}
\DeclareMathOperator{\Aut}{Aut}
\DeclareMathOperator{\Frac}{Frac}
\DeclareMathOperator{\Gal}{Gal}
\DeclareMathOperator{\Jac}{Jac}
\DeclareMathOperator{\PGL}{PGL}
\DeclareMathOperator{\Spec}{Spec}
\DeclareMathOperator{\Supp}{Supp}
\DeclareMathOperator{\SL}{SL}
\DeclareMathOperator{\GL}{GL}
\newcommand{\C}{\mathbb C}
\newcommand{\F}{\mathbb F}
\newcommand{\G}{\mathbb G}
\def\AA{\mathbb A}
\def\PP{\mathbb P}
\newcommand{\Q}{\mathbb Q}
\newcommand{\R}{\mathbb R}
\newcommand{\Z}{\mathbb Z}
\newcommand{\Qbar}{\overline{\mathbb Q}}
\newcommand{\Belyi}{Bely\u{\i}}
\newcommand{\calL}{\mathcal{L}}
\newcommand{\calR}{\mathcal{R}}
\newcommand{\scrO}{\mathscr{O}}
\newcommand{\scrP}{\mathscr{P}}
\DeclareMathOperator{\opchar}{char}
\DeclareMathOperator{\et}{\mathrm{\acute{e}t}}
\newcommand{\Fbar}{{F}^{\mathrm{sep}}}
\newcommand{\Xbar}{\overline{X}}
\newcommand{\defi}{\textsf}
\def\phi{\varphi}
\def\rho{\varrho}
\def\ext{\!\mid\!}
\newcommand{\lexp}[2]{#1(#2)}
\renewcommand{\email}[2][]{%
  \ifx\emails\@empty\relax\else{\g@addto@macro\emails{,\space}}\fi%
  \g@addto@macro\emails{#2}
  \@ifnotempty{#1}{\g@addto@macro\emails{\textrm{ (#1)}}}
}
\begin{document}

\title{On explicit descent of marked curves and maps}

\author{Jeroen Sijsling}
\address[First author]{Mathematics Institute, Zeeman Building, University of
Warwick, Coventry CV4 7AL, UK; Department of Mathematics, Dartmouth College,
6188 Kemeny Hall, Hanover, NH 03755, USA; Universit\"at Ulm, Helmholtzstrasse
18, 89081 Ulm, Germany}
\email[corresponding author]{sijsling@gmail.com}

\author{John Voight}
\address[Second author]{Department of Mathematics, Dartmouth College, 6188
  Kemeny Hall,
Hanover, NH 03755, USA}
\email{jvoight@gmail.com}
\date{\today}

\begin{abstract}
  We revisit a statement of Birch that the field of moduli for a marked
  three-point ramified cover is a field of definition. Classical criteria due
  to D\`ebes and Emsalem can be used to prove this statement in the presence of
  a smooth point, and in fact these results imply more generally that a marked
  curve descends to its field of moduli. We give a constructive version of
  their results, based on an algebraic version of the notion of branches of a
  morphism and allowing us to extend the aforementioned results to the wildly
  ramified case.  Moreover, we give explicit counterexamples for singular
  curves.
\end{abstract}

\keywords{Descent; field of moduli; field of definition; marked curves; \Belyi\ maps}

\maketitle

\setcounter{tocdepth}{2}
{\small\tableofcontents}

In his \emph{Esquisse d'un Programme} \cite{Grothendieck}, Grothendieck
embarked on a study of $\Gal(\Qbar \ext \Q)$, the absolute Galois group of
$\Q$, via its action on a set with a geometric description: the set of finite
morphisms $f:X \to \PP^1$ of smooth projective curves over $\Qbar$ unramified
away from $\{0,1,\infty\}$, known as \defi{\Belyi\ maps}.
The heart of this program is to understand over what number fields a \Belyi\
map is defined and when two Galois-conjugate \Belyi\ maps are isomorphic.  One
of the more subtle aspects of this investigation is the issue of
\emph{descent}: whether or not a \Belyi\ map is defined over its field of
moduli, which is then necessarily a minimal field of definition. Some concrete
descent problems were studied by Couveignes \cite{CouveignesCRF} and
Couveignes--Granboulan \cite{CouveignesGranboulan}; a more general theoretical
approach to the subject of descent of (maps of) curves
was developed by D\`ebes--Douai \cite{DebesDouai} and D\`ebes--Emsalem
\cite{DebesEmsalem}.

By the classical theory of Weil descent, a \Belyi\ map with trivial
automorphism group descends to its field of moduli. As such, one typically
tries to eliminate descent obstructions by a simple rigidification that
eliminates as many non-trivial automorphisms as possible.
In his article on dessins, Birch claims \cite[Theorem 2]{Birch}
that by equipping a \Belyi\ map $f : X \to \PP^1$ with a point $P \in X(\Qbar)$
satisfying $f(P)=\infty$, the marked tuple $(X,f;P)$ descends to its field of
moduli \cite[Theorem 2]{Birch}. Birch provides several references to more
general descent results, but upon further reflection we could not see how these
implied his particular statement. Obtaining a proof or counterexample was not
merely of theoretical importance: indeed, in order to determine explicit
equations for \Belyi\ maps, one needs a handle on their field of definition
\cite[\S 7]{SV}. Moreover, if the descent obstruction indeed vanishes, then it
is desirable to have general methods to obtain an equation over the field of
moduli. The issue under consideration is therefore also very relevant for
practical purposes.

Fortunately, it turned out that general results of D\`ebes--Emsalem \cite[\S
5]{DebesEmsalem} imply that a marked projective curve $(X;P)$ descends to its
field of moduli as long as the marked point $P$ is \emph{smooth}, something
that holds by definition for a \Belyi\ map $f : X \to \PP^1$ (as $X$ itself is
required to be smooth). Their argument then shows equally well that Birch's
statement holds true: a marked \Belyi\ map $(X,f;P)$ descends to its field of
moduli.


In this paper, we revisit the results of D\`ebes--Emsalem \cite{DebesEmsalem}
from a slightly different point of view that is more explicit from the point of
view of Weil cocycles.  Our main theorem (Theorem \ref{thm:BirchCurve}) is as
follows.  We define a \defi{marked map} $(Y,f; Q_1,\dots,Q_n ; P_1,\dots,P_m)$
over $F$ to be a map $f : Y \to X$ of curves over $F$ equipped with distinct
points $Q_1,\dots,Q_n \in Y(F)$ and $P_1,\dots,P_m \in X(F)$ with $n,m \geq 0$.
(For conventions on curves and maps, see section 1.)

\begin{thma*}
 Let
  \begin{equation*}
    (Y, f : Y \to X; Q_1,\ldots,Q_n; P_1,\dots,P_m) = (Y,f;\calR)
  \end{equation*}
  be a marked map of curves over a separably closed field $F = \Fbar$
  such that $n \geq 1$ and at least one of the points $Q_1 , \ldots Q_n$ is
  smooth. Then $(Y,f;\calR)$ descends to its field of moduli.
\end{thma*}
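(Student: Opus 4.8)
The plan is to deduce the descent from Weil's cocycle criterion after using the smooth marked point to eliminate all automorphisms. Write $M$ for the field of moduli of $(Y,f;\calR)$ and set $\Gamma = \Gal(\Fbar \ext M)$. By the very definition of the field of moduli, for each $\sigma \in \Gamma$ there is an isomorphism of marked maps
\[
  \phi_\sigma \colon (Y,f;\calR)^\sigma \xrightarrow{\ \sim\ } (Y,f;\calR),
\]
and $(Y,f;\calR)$ descends to $M$ exactly when the $\phi_\sigma$ may be chosen so that $\phi_\sigma \circ \phi_\tau^\sigma = \phi_{\sigma\tau}$. Two choices of $\phi_\sigma$ differ by an element of $A = \Aut(Y,f;\calR)$, so this group controls the obstruction; I emphasize that $A$ may well be infinite (already for $Y = X = \PP^1$ with a single mark), so I cannot simply invoke the vanishing of an $H^2$ with finite coefficients.

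This is where the smoothness hypothesis enters. Relabel so that $Q_1$ is smooth; then $\calO_{Y,Q_1}$ is a discrete valuation ring whose completion $\widehat{\calO}_{Y,Q_1}$ is abstractly a power series ring over $F$. Using the algebraic notion of branch developed above, a branch at $Q_1$ is the choice of such a formal coordinate, and the set $\mathcal{B}$ of branches at $Q_1$ is a torsor under the group $U = \Aut_F(F[[t]])$ of coordinate changes $t \mapsto a_1 t + a_2 t^2 + \cdots$ with $a_1 \in \G_m$. The crucial local input is a faithfulness statement: an element of $A$ fixing a branch induces the identity on $\widehat{\calO}_{Y,Q_1}$, hence on the function field $F(Y)$ sitting inside its fraction field, and is therefore trivial. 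Thus $A$ acts \emph{freely} on $\mathcal{B}$, and the branch-rigidified object $(Y,f;\calR;\beta)$ has trivial automorphism group. I would stress that this faithfulness is precisely the point that survives wild ramification: in the tame case it already follows from the faithful linear action of $A$ on the one-dimensional tangent space $T_{Q_1}Y$, but in the wild case the $1$-jet no longer suffices and one must retain the full formal branch, which is exactly why the algebraic branch is introduced.

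Once the automorphism group has been killed, Weil descent applies to $(Y,f;\calR;\beta)$ the moment its field of moduli is again $M$, that is, the moment the branch can be chosen $\Gamma$-compatibly. The isomorphisms $\phi_\sigma$ carry branches to branches and so endow $\mathcal{B}$ with descent data, exhibiting it as a $U$-torsor over $M$. Because $U$ is a successive extension of $\G_m$ and copies of $\G_a$, all of which have vanishing first Galois cohomology over the field $M$ (Hilbert 90 in the multiplicative layer, additive Hilbert 90 in the unipotent layers), this torsor is trivial and possesses an $M$-rational branch $\beta_0$. Then $(Y,f;\calR;\beta_0)$ has field of moduli $M$ and no automorphisms, hence descends to $M$; discarding $\beta_0$ descends $(Y,f;\calR)$ itself, as desired.

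The hard part will be the interaction of the two cohomological layers. Asserting that $\mathcal{B}$ is a $U$-torsor \emph{over $M$} presupposes that the $\phi_\sigma$ give genuine descent data on $\mathcal{B}$, whereas a priori they satisfy the cocycle relation only up to the $A$-action; the argument only closes because the free $A$-action on $\mathcal{B}$ absorbs exactly this ambiguity, so that the obstruction to descending $(Y,f;\calR)$ coincides with the obstruction to finding a rational branch, and the latter vanishes by $H^1(M,U) = 0$. Making this identification precise, together with the explicit local computation behind the faithfulness statement at a wildly ramified branch, is the technical core of the proof; by contrast the marked points $P_1,\dots,P_m$ on $X$ and the map $f$ require no separate treatment, as they are transported by every $\phi_\sigma$ and are untouched by the rigidification.
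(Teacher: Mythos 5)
Your overall strategy---rigidify at the smooth point $Q_1$ by a formal branch, kill the automorphisms by faithfulness of the action on the complete local ring, then invoke Weil---is sound in outline, and your freeness claim is correct (it is the analogue of Proposition \ref{prop:constandfaithful} in the paper). But the step you defer to ``the technical core'' is not a technicality: it is exactly where the argument breaks, and it cannot be repaired by $H^1(M,U)=0$. The set $\mathcal{B}$ of formal coordinates at $Q_1$ carries no canonical semilinear action of $\Gamma = \Gal(\Fbar \ext M)$: the only way to let $\sigma$ act is through a choice of $\phi_\sigma$, and one checks (using your own faithfulness statement) that the resulting maps $\beta \mapsto \lexp{\sigma}{\beta}\circ\phi_\sigma^*$ compose as a group action if and only if the $\phi_\sigma$ already satisfy the Weil cocycle condition $\phi_{\sigma\tau}=\phi_\sigma\lexp{\sigma}{\phi_\tau}$---which is the statement being proved. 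So the assertion that the $\phi_\sigma$ ``endow $\mathcal{B}$ with descent data, exhibiting it as a $U$-torsor over $M$'' is circular. What does carry a canonical $\Gamma$-action is the quotient $A\backslash\mathcal{B}$, since the $A$-ambiguity in $\phi_\sigma$ is absorbed there, and a $\Gamma$-fixed point of $A\backslash\mathcal{B}$ would indeed finish the proof by your uniqueness argument. But $A\backslash\mathcal{B}$ is not a $U$-torsor: it is a homogeneous space $U/A'$ with non-normal stabilizer $A'\simeq A$, and the existence of a fixed point for a twisted semilinear action on such a coset space is not governed by $H^1(M,U)$. The relevant obstruction is a nonabelian $H^2$-type (gerbe) class with band $A$---which is precisely the field-of-moduli-versus-field-of-definition obstruction you set out to kill; Hilbert 90 applied to the $\G_m$ and $\G_a$ layers of $U$ says nothing about it.

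The paper supplies exactly the missing ingredient, and it is genuinely different from your setup rather than a rephrasing of it. Instead of branches at $Q_1$ on $Y$, which form a torsor under the huge group $U$ on which $A$ acts freely but very far from transitively, the paper (Theorem \ref{thm:BirchExp}) uses the \emph{finite} set $B(\pi,S)$ of branches of the quotient map $\pi : Y \to G\backslash Y = W$ over $S = \pi(Q_1)$, where $G=A$ is assumed finite. By Proposition \ref{prop:constandfaithful} together with the Galois property of $\pi$, this set is a torsor under $G$ itself: the action is free \emph{and transitive}. Transitivity is what your configuration lacks: it guarantees that any isomorphism $\phi'_\sigma$ can be corrected by an element of $G$ so that $\lexp{\sigma}{b}$ is sent to one fixed chosen branch $b$, for every $\sigma$; freeness then makes the corrected $\phi_\sigma$ unique, and uniqueness forces the cocycle identity, with no cohomological vanishing invoked and no Galois action on the branch set needed in advance. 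Note finally that the paper must treat $\#A=\infty$ by a separate argument (Theorem \ref{thm:infiniteaut} via Lemma \ref{lem:infg0}: one is then in genus $0$ with at most two distinguished points, handled through the normalization), since $G\backslash Y$ degenerates in that case; your uniform treatment of finite and infinite $A$ would be an advantage only if the core step were valid, and it is not.
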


Theorem A extends the results of D\`ebes--Emsalem \cite{DebesEmsalem}, where
the curve $X$ was assumed to be of genus at least $2$ and where moreover the
order of $\Aut (X)$ was assumed to be coprime to the characteristic of the base
field.

Our approach to proving Theorem A is essentially self-contained, and we recover
the result of Birch on descent of marked \Belyi\ maps (Theorem
\ref{thm:BirchBelyi}). We define the \emph{branches} of a morphism, replacing
the choice of a tangential base point with something canonical.  This approach
has several advantages. First, it provides a more unified treatment, extending
results to the wildly ramified (but separable) case.  Second, the notion is
more conducive to constructive applications: by using branches it becomes
straightforward to read off a Weil cocycle from the given rigidification,
without having to calculate the canonical model of $\Aut (X) \backslash X$.
From the point of view of a computational \emph{Esquisse} \cite{SV}, this gain
is quite important. Finally, our approach leads us to construct some explicit
counterexamples to descent in the case where the marked point $P$ is not
smooth, as follows.

\begin{thmb*} \label{thm:BirchSingular}
  There exists a marked curve $(X;P)$ over $\C$ with $X$ projective whose field
  of moduli for the extension $\C \ext \R$ is equal $\R$ but such that $(X;P)$
  does not descend to $\R$.  
  
  Similarly, there exists a marked map $(X,f:X \to \PP^1; P; 0,1,\infty)$ over
  $\C$ with $X$ projective and $f$ unramified outside $\{0,1,\infty\}$ whose
  field of moduli for the extension $\C \ext \R$ is equal $\R$ but that does
  not descend to $\R$.   
\end{thmb*}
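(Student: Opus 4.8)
The plan is to realize the descent obstruction as the nontrivial class in $H^2(\Gal(\C\ext\R),\Aut)$, i.e.\ as a nonsplit quaternionic real structure, and to geometrize it using a node whose two branches are interchanged by complex conjugation. Fix $\Gamma=\Gal(\C\ext\R)=\langle\sigma\rangle$. If a marked object $(X;\calR)$ has field of moduli $\R$, then there is an isomorphism $\phi\colon X\to{}^\sigma X$ respecting the marking, equivalently an antiholomorphic map $\tau\colon X\to X$, and the composite $\tau^2\in\Aut(X;\calR)$ is a well-defined element measuring the obstruction: $(X;\calR)$ descends to $\R$ precisely when $\phi$ can be chosen so that $\tau^2=\mathrm{id}$. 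When $\Aut(X;\calR)\cong\Z/2=\{1,\iota\}$ is central, replacing $\phi$ by $\iota\phi$ leaves $\tau^2$ unchanged (since $\iota$ is central, defined over $\R$, and $\iota^2=\mathrm{id}$), so $\tau^2$ is an intrinsic invariant; the problem thus reduces to building $(X;P)$ with $\Aut(X;P)=\{1,\iota\}$ and $\tau^2=\iota$.

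Concretely, I would take $X$ to be a nodal rational curve: start from $\PP^1$ with coordinate $z$ and pinch the three pairs $\{0,\infty\}$, $\{a,-a\}$, and $\{i/\bar a,-i/\bar a\}$ for a sufficiently general $a$, marking the node $P$ coming from $\{0,\infty\}$. The two relevant symmetries are the automorphism $\iota\colon z\mapsto -z$ and the antiholomorphic map $\tau\colon z\mapsto i/\bar z$; I would verify that each preserves the pinching partition (hence descends to $X$) and fixes $P$, that $\tau$ interchanges the two branches at $P$, and crucially that $\tau^2\colon z\mapsto -z$ equals $\iota$. Then $\tau$ exhibits $(X;P)\cong{}^\sigma(X;P)$, so the field of moduli is $\R$, while $\tau^2=\iota\neq\mathrm{id}$ produces the obstruction. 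Conceptually this is exactly the phenomenon Theorem A excludes for a smooth marked point: there the canonical branch supplies a rigidifying tangential datum (a trivial $\mathbb{G}_m$-torsor) forcing $\tau^2=\mathrm{id}$, whereas at the node the two branches are swapped with no canonical choice, leaving room for the quaternionic twist.

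The step I expect to be the main obstacle is checking that $\Aut(X;P)$ is exactly $\{1,\iota\}$ and not larger: any extra projective automorphism permuting the six pinched points would enlarge $\Aut$ and could split the cocycle, restoring descent. I would control this by regarding the cross-ratios of the configuration as functions of $a$ and showing that, for generic $a$, the only Möbius transformation preserving the unordered partition into the three pairs is $z\mapsto-z$; genericity is precisely what rules out an accidental symmetry.

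For the map statement I would upgrade $X$ to a marked \Belyi-type map by writing $f=g(z^2)$ for a rational function $g$ in $u=z^2$. Evenness makes $f$ single-valued at all three nodes and makes $\iota$ an automorphism of the map, while the intertwining condition $\overline{f(z)}=f(\tau z)$ translates, under $u=z^2$, into the requirement that $g$ be a morphism of real curves from the anisotropic conic $(\PP^1_u,\,u\mapsto-1/\bar u)$ to $\PP^1_\R$, unramified outside $\{0,1,\infty\}$ and satisfying $g(0)=g(\infty)\in\{0,1,\infty\}$. Marking $0,1,\infty$ on the target and $P$ on the source then yields a marked map with field of moduli $\R$ carrying the same obstruction $\tau^2=\iota$. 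The delicate point here is the existence of such a $g$: one must produce an explicit real \Belyi\ map out of the pointless conic that realizes the correct branch value at the pinched pair while confining ramification to $\{0,1,\infty\}$, after which non-descent follows from the identical cohomological computation.
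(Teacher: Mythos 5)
Your framework is sound: the identification of the obstruction with the class of $\tau^2$ in $\Aut(X;P)$, and the observation that $\tau^2$ is a well-defined invariant when $\Aut(X;P)=\{1,\iota\}$, is correct and is essentially the Weil-criterion computation the paper itself performs for $\C\ext\R$. The idea of pinching conjugate pairs so that conjugation swaps the two branches of a node is also the same mechanism as the paper's second counterexample. However, your specific configuration fails, and not for lack of genericity: the curve you build \emph{always} has automorphism group strictly larger than $\{1,\iota\}$. Consider the M\"obius transformation
\begin{equation*}
  \mu(z) \;=\; \frac{i a/\bar a}{z}.
\end{equation*}
It swaps $0$ and $\infty$ (hence fixes the node $P$), sends $\{a,-a\}$ to $\{i/\bar a,-i/\bar a\}$, and sends $\{i/\bar a,-i/\bar a\}$ back to $\{a,-a\}$; so $\mu$ preserves the pinching partition and descends to an automorphism of $(X;P)$ for \emph{every} $a$. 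Thus $\Aut(X;P)\supseteq\{1,\iota,\mu,\iota\mu\}\simeq(\Z/2\Z)^2$. Worse, the composite
\begin{equation*}
  (\mu\tau)(z) \;=\; \mu\!\left(\frac{i}{\bar z}\right) \;=\; \frac{a}{\bar a}\,\bar z
\end{equation*}
is an antiholomorphic map preserving the partition, fixing both $0$ and $\infty$, and satisfying $(\mu\tau)^2(z) = (a/\bar a)(\bar a/a)z = z$. So $\mu\tau$ is an antiholomorphic involution of $(X;P)$, which by Galois descent for $\C\ext\R$ gives a model of $(X;P)$ over $\R$: your curve is \emph{not} a counterexample. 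The genericity argument you propose as the ``main obstacle'' cannot repair this, because the extra symmetry $\mu$ is forced by the construction itself -- you chose the third pair to be the $\tau$-image of the second, and that choice manufactures $\mu$ no matter what $a$ is. (A repair along your lines would need at least two independent conjugate pairs of pairs, e.g.\ pinching $\{0,\infty\}$, $\{a,-a\}$, $\{i/\bar a,-i/\bar a\}$, $\{b,-b\}$, $\{i/\bar b,-i/\bar b\}$ with $a,b$ generic, so that no map $z\mapsto\lambda\bar z$ or $z\mapsto\lambda/\bar z$ preserves the partition.) Since your second statement (the marked \Belyi-type map) is built on top of the first curve, it inherits the same failure; its additional existence question for $g$ is moot until the curve is fixed.

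For comparison, the paper avoids exactly this pitfall by taking normalizations of positive genus whose automorphism groups can be pinned down unconditionally: its first example is a plane quartic with a node whose genus $2$ normalization has automorphism group $\Z/2\Z$ (verified by invariant theory), so that the curve itself, and a fortiori the marked curve, fails to descend; its second example pinches a genus $4$ hyperelliptic curve with $\Aut\simeq\Z/4\Z$ so that the unmarked curve descends but the marked one does not. In both cases the delicate step you flagged -- certifying that no unexpected automorphism splits the cocycle -- is resolved by explicit computation rather than a genericity heuristic, and your example shows why that caution is necessary.
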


Our paper is organized as follows.  In section \ref{sec:prelim}, we introduce
the relevant background and notions under consideration, and we give precise
statements of the main results of this article.
After a review of the classical Weil descent criterion in section
\ref{sec:Weil}, we present our theory of branches in section \ref{sec:branch}.
We give some concrete examples in section \ref{sec:XP}; these include the
explicit descent of marked hyperelliptic curves to their field of moduli as
well as the descent a marked Klein quartic $(X;P)$ using branches. In section
\ref{sec:counter}, we conclude the paper by presenting our counterexamples in
the singular case.  In an appendix, we consider descent of marked Galois
\Belyi\ maps in genus $0$.

\subsubsection*{Acknowledgments}
We would like to thank Bryan Birch, Pierre D\`ebes, Michel Emsalem, Enric Nart,
Andrew Obus, and the anonymous referee for their valuable comments on previous
versions of this article.  The second author was supported by an NSF CAREER
Award (DMS-1151047).

\subsubsection*{Declaration on competing interests}
The authors confirm that neither of them has any competing interests.

\section{Background, notation, and statements of main results}\label{sec:prelim}

In this section, we set up the basic background and notation we will use in the
rest of the paper.

\subsection{Definitions and notation}

Let $F$ be a field with separable closure $\Fbar$ and absolute Galois group
$\Gamma_F = \Gal (\Fbar \ext F)$.  When $\opchar F = 0$, we will also write
$\Fbar=\overline{F}$.  A \defi{curve} $X$ over $F$ is a geometrically integral,
separated scheme of finite type over $F$ of dimension $1$.
A \defi{map} $f :Y \to X$ of curves over $F$ is a nonconstant morphism defined
over $F$.  If $X,Y$ are projective, this implies that $f$ is finite, hence
proper.  We will assume throughout and without further mention that all maps
are \emph{generically separable}, so the extension of function fields $F(Y)
\ext F(X)$ is separable.

An \defi{isomorphism} of maps $f:Y \to X$ and $f':Y' \to X'$ of curves over $F$
is a pair $(\psi,\phi)$ of isomorphisms of curves $\psi :Y \xrightarrow{\sim}
Y'$ and $\phi :X \xrightarrow{\sim} X'$ over $F$ such that $\phi f = f' \psi$,
i.e., such that the diagram
\begin{equation}\label{diag:aut}
  \begin{split}
    \xymatrix{
      Y \ar[r]^{\psi} \ar[d]^{f} & Y' \ar[d]^{f'} \\
      X \ar[r]^{\phi} & X'
    }
  \end{split}
\end{equation}
commutes; if $f$ is isomorphic to $f'$ we write $f \xrightarrow{\sim} f'$.

The absolute Galois group $\Gamma_F$ acts naturally on the set of curves and
the set of maps between curves defined over $\Fbar$---in both cases, applying
the Galois action of the automorphism $\sigma$ comes down to applying $\sigma$
to the defining equations of an algebraic model of $X$ (resp.\ $f$) over
$\Fbar$. The conjugate of a curve $X$ over $\Fbar$ by an automorphism $\sigma
\in \Gamma_F$ is denoted by $\lexp{\sigma}{X}$, and similarly that of a map $f$
by $\lexp{\sigma}{f}$.

Given a curve $X$ over $F$ and an extension $K \ext F$ of fields, we will
denote by $X_K$ the base extension of $X$ to $K$.  We denote the group of of
automorphisms of $X_K$ over $K$ by $\Aut (X) (K)$. This latter convention
reflects the fact that $\Aut (X)$ is a scheme over $F$, the $K$-rational points
of which are the $K$-rational automorphisms of $X$ (by work of Grothendieck on
the representability of the Hilbert scheme and related functors
\cite[4.c]{GrothendieckIV}). As in the introduction, we occasionally write
$\Aut (X)$ for the group of points $\Aut (X) (\Fbar)$ when no confusion is
possible, and for curves $Y$ and $X$ over $K$, we will often speak of an
isomorphism between $Y$ and $X$ over $K$ when more precisely an isomorphism
between $Y_K$ and $X_K$ over $K$ is meant.

We call a map of curves $f : Y \to X$ over $F$ \defi{geometrically generically
Galois} if the group $G = \Aut_X(Y,f)(\Fbar)$ acts transitively on the fibers
of $f$, or equivalently if the extension of function fields $\Fbar(Y) \ext
\Fbar(X)$ is Galois.  (This terminology reflects the fact that in this version
the usual Galois torsor property is only required to hold on the generic
points, and that the elements of the group $G$ need only be defined over
$\Fbar$.)  In what follows, we abbreviate \emph{geometrically generically
Galois} to simply \defi{Galois}. If $f : Y \to X$ is a Galois cover, then it
can be identified with a quotient $q_H : Y \to H \backslash Y$ of $Y$ by a
finite $F$-rational subgroup $H$ of $G$. (This quotient by $G$ exists by work
of Grothendieck \cite[\S 6]{GrothendieckIII}.)

Given a Galois map $f : Y \to X$ branching over ${P \in X(\Fbar)}$, the
ramification indices of the points $Q \in f^{-1}(P)(\Fbar)$ are all equal, and
we call this common value the \defi{branch index} of the point $x$ on $X$.

We will now define certain rigidifications of the maps and curves considered
above.


\begin{defn}
  Let $n \in \Z_{\geq 0}$. An $n$-\defi{marked curve} (or $n$-\defi{pointed
  curve}) $(X;P_1,\dots,P_n)$ over $F$ is a curve $X$ equipped with distinct
  points $P_1,\dots,P_n \in X(F)$.  An \defi{isomorphism} of $n$-marked curves
  $(X;P_1,\dots,P_n) \xrightarrow{\sim} (X';P_1',\dots,P_n')$ over $F$ is an
  isomorphism $\phi : X \xrightarrow{\sim} X'$ over $F$ such that
  $\phi(P_i)=P_i'$ for $i=1,\dots,n$.
\end{defn}

We will often use the simpler terminology \defi{marked curve} for a $1$-marked
curve.

\begin{defn}
  For $n,m \in \Z_{\geq 0}$, an $n,m$-\defi{marked map} $(Y,f; Q_1,\dots,Q_n ;
  P_1,\dots,P_m)$ over $F$ is a map $f : Y \to X$ of curves over $F$ equipped
  with distinct points $Q_1,\dots,Q_n \in Y(F)$ and $P_1,\dots,P_m \in X(F)$.
  An \defi{isomorphism} between $n,m$-marked maps over $F$ is an isomorphism of
  maps $(\psi,\phi) : f \xrightarrow{\sim} f'$ over $F$ such that $\psi (Q_i) =
  Q_i'$ for $j=1,\dots,n$ and $\phi(P_i) = P_i'$ for $i=1,\dots,m$.
\end{defn}

In the definition of an $n,m$-marked map, no relationship between the points
$Q_j$ and $P_i$ under $f$ is assumed; moreover, the points $Q_j$ may or may not
be ramification points and the points $P_i$ may or may not be branch points.

We recover the case of marked curves from marked maps by taking $Y = X$ and
choosing $f : X \to X$ to be the identity. To avoid cumbersome notation, we
will sometimes abbreviate
\begin{equation*}
  (Y,f;Q_1,\dots,Q_n;P_1,\dots,P_m)=(Y,f;\calR)
\end{equation*}
and refer to $\calR$ as the \defi{rigidification data}. We use similar notation
$(X;\calR)$ for marked curves.

\subsection{\Belyi\ maps}

We will be especially interested in the following special class of maps.

\begin{defn}\label{def:Belyi}
  Suppose $\opchar F = 0$. A \defi{\Belyi\ map} over $F$ is a marked map
  \begin{equation*}
    (X,f : X \to \PP^1; - ; 0,1,\infty)
  \end{equation*}
  over $F$ such that $X$ is projective and $f$ is unramified outside
  $\{0,1,\infty\}$.  A \defi{marked} \Belyi\ map $(X,f:X \to
  \PP^1;P;0,1,\infty)$ over $F$ is a \Belyi\ map that is marked. A
  \defi{\Belyi\ map with a marked cusp} over $F$ is a marked \Belyi\ map $(X,f
  : X \to \PP^1; P ; 0,1,\infty)$ over $F$ such that $f(P) = \infty$; the
  \defi{width} of the marked point $P$ is the ramification index of $P$ over
  $\infty$.
\end{defn}

\begin{rmk}
  Replacing $f$ by $1/f$ or $1/(1-f)$, one may realize any marked \Belyi\ map
  whose marking lies above $0$ or $1$ as a \Belyi\ map with a marked cusp.
\end{rmk}

An isomorphism $(\psi,\phi)$ of \Belyi\ maps fixes the target $\PP^1$: since
the automorphism $\phi$ of $\PP^1$ has to fix each of the points $0,1,\infty$
by definition, $\phi$ is the identity.  When no confusion can result, we will
often abbreviate a (marked) \Belyi\ map by simply $f$.

\begin{rmk}
  Relaxing the notion of isomorphism of \Belyi\ maps by removing the marked
  points $0,1,\infty$ on $\PP^1$ typically leads one to consider covers of
  conics instead of $\PP^1$, as in work of van Hoeij--Vidunas \cite[\S\S
  3.3--3.4]{vHV}; we discuss this setup further in the appendix.
\end{rmk}

Let $f : X \to \PP^1$ be a \Belyi\ map of degree $d$ over $\Qbar \subset \C$.
Numbering the sheets of $f$ by $1,\dots,d$, we define the \defi{monodromy
group} of $f$ to be the group $G \leq S_d$ generated by the monodromy elements
$s_0,s_1,s_\infty$ of loops around $0,1,\infty$; the monodromy group is
well-defined up to conjugation in $S_d$, as are the conjugacy classes
$C_0,C_1,C_\infty$ in $S_d$ of the monodromy elements, specified by partitions
of $d$.  We organize this data as follows.

\begin{defn}
  The \defi{passport} of $f$ is the tuple $(g;G;C_0,C_1,C_\infty)$, where $g$
  is the geometric genus of $X$ and $G \leq S_d$ and $C_0,C_1,C_\infty$ are the
  monodromy group and $S_d$-conjugacy classes associated to $f$. The
  \defi{size} of a passport is the cardinality of the set of
  $\Qbar$-isomorphism classes of \Belyi\ maps with given genus $g$ and
  monodromy group $G$ generated by monodromy elements in the $S_d$-conjugacy
  classes $C_0,C_1,C_\infty$, respectively.
\end{defn}

For all $\sigma \in \Aut(\C)$, the \Belyi\ map $\lexp{\sigma}{f}$ has the same
ramification indices and monodromy group (up to conjugation) as $f$. Therefore
the index of the stabilizer of $f$ in $\Aut(\C)$ is bounded by the size of its
passport. The size of a passport is finite and can be computed by combinatorial
or group-theoretic means \cite[Theorem 7.2.1]{Serre}; thus the field of moduli
of $f$ over $\Qbar$ is a number field of degree at most the size of the
passport.

We analogously define a \defi{marked passport} associated to a \Belyi\ map with
a marked cusp to be the tuple $(g;G;C_0,C_1,C_\infty;\nu)$ where $\nu$ is the
width of the marked cusp, corresponding to a cycle in the partition associated
to $C_\infty$. Once more the field of moduli of a \Belyi\ map with a marked
cusp over $\Qbar$ is a number field of degree at most the size of its marked
passport.

Now let $K\ext F$ be a Galois field extension, let $X$ be a curve over $K$, and
let $\Gamma=\Gal(K \ext F)$. The \defi{field of moduli $M^K_F (X)$ of $X$ with
respect to $K \ext F$} is the subfield of $K$ fixed by the subgroup
\begin{equation}
  \left\{ \sigma \in \Gamma = \Gal(K \ext F) : \lexp{\sigma}{X} \simeq X
  \right\}.
\end{equation}
In a similar way, one defines the field of moduli $M^K_F (Y;f;\calR)$ of a
marked map.  In this paper we will usually only consider the case where $K =
\Fbar$, and we simply call $M(X) = M^K_F (X)$ the \defi{field of moduli} of
$X$, and similarly for maps and marked variants.

\subsection{Statements}

The central question that we consider in this article is the following.

\begin{ques} \label{ques:descentcond}
  What conditions ensure that a curve, map, or one of its marked
  variants is defined over its field of moduli?
\end{ques}

For brevity, when an object is defined over its field of moduli with respect to
a separable extension $K \ext F$, we will say that the object \defi{descends
(to its field of moduli)}.

%
%

We are now in a position to give a precise formulation of the results
motivating this paper.

\begin{thm}\label{thm:BirchCurve}
  Let
  \begin{equation*}
    (Y, f : Y \to X; Q_1,\ldots,Q_m; P_1,\dots,P_n) = (Y,f;\calR)
  \end{equation*}
  be a marked map of curves over a separably closed field $F = \Fbar$ with $m
  \geq 1$ such that at least one of the points $Q_1 , \ldots Q_m$ is smooth.
  Then $(Y,f;\calR)$ descends to its field of moduli.
\end{thm}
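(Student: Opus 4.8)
The plan is to verify the Weil descent criterion of Section~\ref{sec:Weil}. Write $M$ for the field of moduli of $(Y,f;\calR)$ and set $\Gamma = \Gal(\Fbar \ext M)$; by definition of $M$ this is exactly the subgroup of $\sigma$ for which ${}^{\sigma}(Y,f;\calR) \simeq (Y,f;\calR)$. For each such $\sigma$ the definition of the field of moduli furnishes an isomorphism of marked maps $g_\sigma \colon {}^{\sigma}(Y,f;\calR) \xrightarrow{\sim} (Y,f;\calR)$. Descending to $M$ is then equivalent to choosing the $g_\sigma$ so that $\sigma \mapsto g_\sigma$ is locally constant and satisfies the cocycle relation $g_{\sigma\tau} = g_\sigma \circ {}^{\sigma}g_\tau$. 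Since any two admissible choices of $g_\sigma$ differ by an element of $A := \Aut(Y,f;\calR)$, the failure of the cocycle relation is measured by a class in $H^2(\Gamma, A)$ (interpreted via the band when $A$ is nonabelian). The entire problem is to show that this obstruction vanishes, and the only hypothesis not yet used is the smoothness of the marked point.

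To kill the obstruction I would follow D\`ebes--Emsalem, recast through branches, by rigidifying the $g_\sigma$ canonically. First pass to the quotient $\pi \colon Y \to B := G \backslash Y$ by the finite group $G = \Aut(Y,f)$ of automorphisms of the underlying map (acting on $Y$), retaining the images of the branch divisor and of $\calR$. Because this quotient and its decorations are formed functorially from $(Y,f;\calR)$, they are fixed up to canonical isomorphism by every $\sigma \in \Gamma$, so the decorated base descends canonically to a model $B_0$ over $M$ carrying the $M$-rational image point $\bar{Q}_1$ of the smooth marked point $Q_1$. This reduces the descent of $(Y,f;\calR)$ to the descent of the cover $Y \to B_0$ with its extra rational structure at $\bar{Q}_1$.

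The crux is then to use the branch at $Q_1$, in the sense developed in Section~\ref{sec:branch}, to single out each $g_\sigma$ uniquely. Since $Q_1$ is smooth, its completed local ring is a power series ring in one variable, so there is a single canonically defined branch $\vec{Q}_1$ of $Y$ above $\bar{Q}_1$; and an isomorphism of covers of $(B_0,\text{branch locus})$ is determined by its effect on one branch above a chosen point, exactly as a tangential base point rigidifies a topological cover. Normalizing each $g_\sigma$ by the requirement that it carry $\vec{Q}_1$ to the corresponding branch of the conjugate then makes the choice canonical, hence automatically cocyclic, so that $[c]=0$ and a model over $M$ exists.

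I expect the main obstacle to be precisely this last step: proving that a branch at a smooth point is a well-defined datum that is simultaneously $\Gamma$-equivariant and rigidifying, and that the branch-normalized isomorphism is genuinely unique. This is where smoothness is essential---at a singular point several branches meet and no canonical choice survives, which is the very mechanism behind the counterexamples of Theorem~B. A secondary technical point will be to develop the branch formalism purely algebraically, through completed local rings rather than analytic tangential base points, so that the argument also covers the wildly ramified (but separable) case promised in the introduction.
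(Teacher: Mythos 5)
Your overall strategy---normalize the Weil isomorphisms by their action on a branch lying over the image of the smooth marked point---is indeed the paper's strategy (Theorem \ref{thm:BirchExp}), but the step you yourself identify as the crux contains a genuine error, in fact two. First, you form the quotient by $G=\Aut(Y,f)$ rather than by $G=\Aut(Y,f;\calR)$ as the paper does. This matters: admissible modifications of a given $g_\sigma$ are by elements of $\Aut(Y,f;\calR)$, and an element of $\Aut(Y,f)$ carrying one branch at $Q_1$ to another fixes $Q_1$ but need not fix the remaining points of $\calR$; with your choice of quotient, the branch-normalized maps may fail to be isomorphisms of the \emph{marked} object, so the cocycle you produce descends $(Y,f;Q_1)$ but not necessarily the full rigidification. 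Second, and more seriously, your rigidification claim is wrong: there is no ``single canonically defined branch above $\bar{Q}_1$.'' Smoothness of $Q_1$ gives a unique branch of the \emph{curve} $Y$ at $Q_1$, but what is needed is a branch of the \emph{quotient map} $\pi$ over $\pi(Q_1)$, and when $\pi$ ramifies at $Q_1$ (the typical and only interesting case, namely when the stabilizer of $Q_1$ in the automorphism group is nontrivial) there are $e>1$ such branches, where $e$ is the ramification index, permuted simply transitively by that stabilizer---so no canonical choice exists. If instead you mean the curve-branch at $Q_1$, then requiring $g_\sigma$ to preserve it is no stronger than requiring $g_\sigma(\lexp{\sigma}{Q_1})=Q_1$, which every admissible $g_\sigma$ already satisfies; no rigidification is gained and the obstruction is untouched. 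The paper's resolution is different: fix an \emph{arbitrary} branch $b$ of $\pi$ over $S=\pi(Q_1)$; by Proposition \ref{prop:constandfaithful} the set $B(\pi,S)$ is a torsor under $G=\Aut(Y,f;\calR)$, so transitivity gives existence of $\phi_\sigma$ with $\phi_\sigma(\lexp{\sigma}{b})=b$, freeness gives its uniqueness, and uniqueness---not canonicity of $b$---forces the cocycle relation.

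There is also a second, independent gap: you assume throughout that the automorphism group is finite, so that the quotient $G\backslash Y$ exists as a curve. When $\Aut(Y,f;\calR)$ is infinite this construction is unavailable, and the paper needs a separate argument (Lemma \ref{lem:infg0} and Theorem \ref{thm:infiniteaut}): one shows $f$ must then be the identity and $Y$ has geometric genus $0$ with at most two marked or singular points, and one treats the resulting (possibly singular) genus-$0$ cases by hand, via $(\PP^1;\infty)$ in the smooth case and via the normalization and the identification of the automorphism group with $\G_m$ in the singular case. Your proposal says nothing about this case, so even with the torsor argument repaired it would prove only the finite-automorphism-group half of the theorem.
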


We will prove Theorem \ref{thm:BirchCurve} as a consequence of Theorems
\ref{thm:BirchExp} and \ref{thm:infiniteaut} (the latter dealing with the case
of genus zero); it is a special instance of a more general geometric result
that shows how to extract a Weil cocycle from the rigidification provided in
the theorem. By contrast, marked curves can fail to descend if the marked point
is singular: in section~\ref{sec:counter} we construct two explicit
counterexamples, one on a curve $X$ that descends to $\R$ and another on a
curve that does not.

Before proceeding, we deduce several important corollaries.

\begin{cor}
  Let $(X;P)$ be a curve with a smooth marked point over a separably closed
  field $F = \Fbar$. Then $(X;P)$ descends.
\end{cor}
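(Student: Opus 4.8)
The plan is to deduce the corollary directly from Theorem \ref{thm:BirchCurve} by viewing the marked curve $(X;P)$ as a marked map, using the identification $Y = X$ and $f = \mathrm{id}_X$ described above. Concretely, I would consider the marked map with source and target both equal to $X$, structure map the identity $\mathrm{id}_X : X \to X$, a single marked source point $Q_1 = P$, and no marked target points; in the notation of Theorem \ref{thm:BirchCurve} this corresponds to $m = 1$ and $n = 0$. Since $P$ is a smooth point of $X$ by hypothesis, the point $Q_1 = P$ is smooth, so the hypotheses of Theorem \ref{thm:BirchCurve}---namely $m \geq 1$ together with smoothness of at least one source point---are satisfied.

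It remains to check that this translation is compatible with both the notion of field of moduli and that of descent, so that the conclusion for the marked map transfers back to $(X;P)$. For any $\sigma \in \Gamma_F$, conjugating the marked map again produces the identity map on $\lexp{\sigma}{X}$ marked at $\lexp{\sigma}{P}$, so Galois conjugation of the marked map is identified with Galois conjugation of $(X;P)$. Moreover, an isomorphism of marked maps $(\psi,\phi)$ between $(X,\mathrm{id}_X;P)$ and $(X',\mathrm{id}_{X'};P')$ satisfies $\phi\,\mathrm{id}_X = \mathrm{id}_{X'}\,\psi$, forcing $\psi = \phi$, while the marking condition forces $\psi(P) = P'$; hence such isomorphisms are exactly the isomorphisms of marked curves $(X;P) \xrightarrow{\sim} (X';P')$. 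Consequently the two objects have the same stabilizer in $\Gamma_F$, hence the same field of moduli, and a model of one over that field furnishes a model of the other.

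With these compatibilities in hand, Theorem \ref{thm:BirchCurve} applies and shows that the marked map descends to its field of moduli; transporting back along the identification yields that $(X;P)$ descends, as desired. I do not anticipate any genuine difficulty here: all the substantive content lies in Theorem \ref{thm:BirchCurve}, and the only verification needed is the routine compatibility of the field-of-moduli and descent notions under the functor sending a marked curve to its associated marked map with identity structure map.
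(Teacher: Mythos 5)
Your proof is correct and matches the paper's (implicit) argument exactly: the corollary is stated as an immediate consequence of Theorem \ref{thm:BirchCurve} via the identification, made earlier in the paper, of a marked curve $(X;P)$ with the marked map $(X,\mathrm{id}_X;P)$. Your verification that isomorphisms of such marked maps force $\psi = \phi$ and hence coincide with isomorphisms of marked curves is precisely the routine compatibility the paper leaves unstated.
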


Another corollary is the following theorem.

\begin{thm}\label{thm:BirchBelyi}
  A marked \Belyi\ map $(X,f:X \to \PP^1; P; 0,1,\infty)$ over $\Qbar$
  descends.
\end{thm}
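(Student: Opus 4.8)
The plan is to obtain this as an immediate corollary of Theorem \ref{thm:BirchCurve}, by viewing a marked \Belyi\ map as a marked map that satisfies the hypotheses of that theorem. Explicitly, the marked \Belyi\ map $(X, f : X \to \PP^1; P; 0,1,\infty)$ is the marked map whose source curve is $X$, whose target curve is $\PP^1$, whose single source marking is $P$, and whose target markings are $0,1,\infty$; in the notation of Theorem \ref{thm:BirchCurve} this is the case $m = 1$, $n = 3$. First I would check the hypotheses: the base field $\Qbar$ has characteristic $0$ and is algebraically closed, hence separably closed; by Definition \ref{def:Belyi} the curve $X$ is smooth and projective, so its point $P$ is smooth; and there is exactly one source marking, so $m = 1 \geq 1$. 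Theorem \ref{thm:BirchCurve} then applies and shows that the underlying marked map descends to its field of moduli.

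It then remains to identify descent of the marked map with descent of the marked \Belyi\ map, which reduces to checking that the two notions of isomorphism, and hence the two fields of moduli (taken with respect to $\Qbar \ext \Q$), coincide. This is the only point requiring care, and it is settled by the observation recorded after Definition \ref{def:Belyi}: any isomorphism $(\psi,\phi)$ of the underlying maps that preserves the markings $0,1,\infty$ has $\phi$ an automorphism of $\PP^1$ fixing three points, so $\phi = \mathrm{id}$. Thus isomorphisms of marked maps and of marked \Belyi\ maps agree, the two fields of moduli coincide, and descent of the marked map yields descent of the marked \Belyi\ map.

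I do not expect any serious obstacle here: once the bookkeeping identifying $(X,f;P;0,1,\infty)$ with a marked map is in place, Theorem \ref{thm:BirchCurve} applies verbatim. I would note only that the genus-zero case, for instance $X \cong \PP^1$ with $\Aut(X) = \PGL_2$ infinite, is already subsumed by Theorem \ref{thm:BirchCurve} via its genus-zero input (Theorem \ref{thm:infiniteaut}), so it needs no separate treatment.
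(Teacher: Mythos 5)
Your proposal is correct and takes essentially the same route as the paper, which presents Theorem \ref{thm:BirchBelyi} as a direct corollary of Theorem \ref{thm:BirchCurve}: since $X$ is smooth and projective, the marked point $P$ is smooth, so the marked map $(X,f;P;0,1,\infty)$ over the separably closed field $\Qbar$ satisfies the hypotheses of that theorem. The bookkeeping you spell out---that a marked \Belyi\ map is by definition a marked map, and that any isomorphism preserving $0,1,\infty$ has identity component on $\PP^1$---is exactly the implicit identification the paper relies on.
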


Theorem \ref{thm:BirchBelyi} was claimed by Birch \cite[Theorem 2]{Birch} in
the special case of a \Belyi\ map with a marked cusp; but his proof is
incomplete. In work of D\`ebes--Emsalem \cite[\S 5]{DebesEmsalem}, a proof of
Theorem \ref{thm:BirchBelyi} is sketched using a suitable embedding in a field
of Puiseux series.  The following corollary of Theorem \ref{thm:BirchBelyi}
then follows immediately.

\begin{cor}\label{cor:markedmoduli}
  A \Belyi\ map $f:X \to \PP^1$ is defined over a number field of degree at
  most the minimum of the sizes of the marked passports $(X,f; P; 0,1,\infty)$,
  where $P \in f^{-1}(\{0,1,\infty\})$.
\end{cor}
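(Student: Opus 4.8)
The plan is to assemble this statement from two facts already in hand: the descent of a marked \Belyi\ map (Theorem~\ref{thm:BirchBelyi}) and the bound on the degree of its field of moduli in terms of the size of its marked passport. Fix a point $P \in f^{-1}(\{0,1,\infty\})$. If $P$ lies above $0$ or $1$, I would first replace $f$ by $1/f$ or $1/(1-f)$, as in the remark following Definition~\ref{def:Belyi}, so as to arrange $f(P) = \infty$ and thereby view $(X,f;P;0,1,\infty)$ as a \Belyi\ map with a marked cusp. These substitutions are defined over $\Q$, so they change neither the field of moduli nor the field of definition; they merely permute the triple $\{0,1,\infty\}$ together with the associated conjugacy classes, and hence preserve the size of the marked passport.

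With $P$ a marked cusp, I would invoke the bound recorded above: every $\sigma \in \Gal(\Qbar \ext \Q)$ carries $(X,f;P;0,1,\infty)$ to a marked \Belyi\ map with the same marked passport, so the Galois orbit of its isomorphism class is finite and the field of moduli $M = M(X,f;P;0,1,\infty)$ is a number field with $[M : \Q]$ at most the size of that marked passport. By Theorem~\ref{thm:BirchBelyi} the marked map descends, i.e.\ admits a model over $M$; forgetting the marked point $P$, this is in particular a model of the bare \Belyi\ map $f : X \to \PP^1$ over $M$. Thus $f$ is defined over a number field whose degree is bounded by the size of the marked passport attached to this $P$.

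Finally, since this conclusion holds for each $P \in f^{-1}(\{0,1,\infty\})$, I would simply select the marking minimizing the bound, which yields a field of definition for $f$ of degree at most the minimum of the marked passport sizes, as asserted in Corollary~\ref{cor:markedmoduli}. The argument is little more than bookkeeping once Theorem~\ref{thm:BirchBelyi} is granted; the only step demanding genuine care is the reduction to the cusp case, where one must confirm that the $\Q$-rational change of coordinates on $\PP^1$ leaves the passport count intact, which is immediate from the fact that it only relabels the branch points and their ramification data.
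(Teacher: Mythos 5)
Your proposal is correct and follows exactly the route the paper intends: the paper leaves this corollary as an immediate consequence of Theorem~\ref{thm:BirchBelyi} together with the stated bound that the field of moduli of a marked \Belyi\ map is a number field of degree at most the size of its marked passport, which is precisely the argument you spell out (including the harmless $\Q$-rational reduction via $1/f$ or $1/(1-f)$ to the marked-cusp case and the final minimization over $P$).
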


Corollary \ref{cor:markedmoduli} often enables one to conclude that the \Belyi\
map itself descends, as the following result shows.

\begin{cor}\label{cor:fombound}
  Let $f:X \to \PP^1$ be a \Belyi\ map over $\Qbar$ with a marked cusp $P$
  whose ramification index is unique in its fiber $f^{-1}(\infty)$. Then the
  \Belyi\ map $(X,f;-;0,1,\infty)$ descends.
\end{cor}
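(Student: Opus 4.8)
The plan is to reduce to Theorem \ref{thm:BirchBelyi} on descent of marked \Belyi\ maps by showing that, under the uniqueness hypothesis, the cusp $P$ is canonically determined by the underlying unmarked \Belyi\ map; passing from $(X,f;-;0,1,\infty)$ to $(X,f;P;0,1,\infty)$ will then not change the field of moduli.

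First I would compare the two relevant Galois stabilizers. Write $S$ for the set of $\sigma \in \Gal(\Qbar \ext \Q)$ with $\lexp{\sigma}{(X,f)} \simeq (X,f)$ as unmarked \Belyi\ maps, and $S_P$ for the analogous set for the marked \Belyi\ map $(X,f;P)$; the two fields of moduli are the respective fixed fields. Forgetting the marking gives $S_P \subseteq S$ at once, so the marked field of moduli contains the unmarked one, and it remains to prove the reverse inclusion $S \subseteq S_P$.

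The key step uses the uniqueness of the ramification index of $P$. Fix $\sigma \in S$ and an isomorphism $\psi : \lexp{\sigma}{X} \xrightarrow{\sim} X$ with $f \circ \psi = \lexp{\sigma}{f}$, recalling that for \Belyi\ maps the induced automorphism of the target $\PP^1$ is forced to be the identity. Then $\psi$ restricts to a bijection $(\lexp{\sigma}{f})^{-1}(\infty) \to f^{-1}(\infty)$ that preserves ramification indices. Now $\lexp{\sigma}{P}$ lies over $\infty$ with ramification index equal to the width of $P$, since Galois conjugation preserves ramification indices, and this width is unique in $(\lexp{\sigma}{f})^{-1}(\infty)$ because it is unique in $f^{-1}(\infty)$. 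Hence $\psi(\lexp{\sigma}{P})$ is the unique point of $f^{-1}(\infty)$ with that index, namely $P$ itself, so $\psi$ respects the markings and $\sigma \in S_P$. This yields $S = S_P$ and therefore $M(X,f;-;0,1,\infty) = M(X,f;P;0,1,\infty)$.

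Finally I would invoke Theorem \ref{thm:BirchBelyi}: the marked \Belyi\ map descends, so there is a model $(X_0,f_0;P_0)$ over the common field of moduli $M$ whose base change to $\Qbar$ recovers $(X,f;P)$. Forgetting $P_0$ produces a model $(X_0,f_0)$ over $M$ of the unmarked \Belyi\ map, exhibiting its descent to $M = M(X,f;-;0,1,\infty)$, as claimed. The only point requiring care is the uniqueness argument that forces every isomorphism of unmarked \Belyi\ maps to preserve the distinguished cusp; the rest is formal.
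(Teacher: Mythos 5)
Your proposal is correct and follows essentially the same route as the paper's proof: both show that any isomorphism $\lexp{\sigma}{(X,f)} \xrightarrow{\sim} (X,f)$ must carry $\lexp{\sigma}{P}$ to $P$, because such an isomorphism fixes $\infty$ on the target $\PP^1$ and preserves ramification indices, and $P$ is the unique point in $f^{-1}(\infty)$ with its index; hence the marked and unmarked fields of moduli coincide and Theorem \ref{thm:BirchBelyi} applies. Your phrasing in terms of the stabilizer subgroups $S$ and $S_P$ is just a mild repackaging of the paper's argument, not a different method.
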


\begin{proof}
  Let $F$ be the field of moduli of the marked map $(X,f;-;0,1,\infty)$ with
  respect to the extension $\Qbar \ext \Q$.  Let $\sigma \in \Gamma_F$ and
  consider the conjugate \Belyi\ map $(\lexp{\sigma}{f} ;-; 0,1,\infty)$ with
  source $\lexp{\sigma}{X}$. Since $F$ is the field of moduli, there exists an
  isomorphism
  \begin{equation*}
    (\psi_{\sigma},\phi_{\sigma}) :
    (\lexp{\sigma}{f} ;-; 0,1,\infty)
    \xrightarrow{\sim} (X,f ;-; 0,1,\infty)
  \end{equation*}
  with $\phi_{\sigma}$ the identity map (as noted after Definition
  \ref{def:Belyi}).  We see that
  \begin{equation}
    f (\psi_\sigma (\lexp{\sigma}{P})) = \lexp{\sigma}{f} (\lexp{\sigma}{P}) =
    \lexp{\sigma}{\infty} = \infty = f(P)
  \end{equation}
  so $\psi_\sigma (\lexp{\sigma}{P}) \in f^{-1}(\infty)$. The ramification
  index of $f = \lexp{\sigma}{f} \psi_{\sigma}^{-1}$ at $\psi_{\sigma}
  (\lexp{\sigma}{P})$ is equal to the ramification of $\lexp{\sigma}{f}$ at
  $\lexp{\sigma}{P}$, which in turn equals that of $f$ at $P$. By the
  uniqueness hypothesis, we must have that $(\psi_{\sigma},\phi_{\sigma})$
  sends $\lexp{\sigma}{P}$ to $P$.

  Since $\sigma$ was arbitrary, this means that the field of moduli of the
  marked \Belyi\ map $(X,f; P; 0,1,\infty)$ coincides with that of $(X,f;-;
  0,1,\infty)$. By Theorem \ref{thm:BirchBelyi}, $(X,f; P; 0,1,\infty)$
  descends to this common field of moduli and thus so does $(X,f;-;
  0,1,\infty)$.
\end{proof}

\begin{rmk}\label{rmk:fombound}
  The hypothesis of Corollary \ref{cor:fombound} is very often satisfied. When
  it is not, one can still try to obtain a model of a \Belyi\ map over a small
  degree extension of its field of moduli by ensuring that marking a point does
  not make the size of the passport grow too much; for example, if $P$ is a
  point of maximal ramification index then the automorphism group of the marked
  tuple $(X,f;P;0,1,\infty)$ may be of small index in that of
  $(X,f;-;0,1,\infty)$.
\end{rmk}

\section{Weil cocycles}\label{sec:Weil}

Our main tool for the construction of examples and counterexamples is the
\emph{Weil cocycle criterion}, which we will give in Theorem \ref{thm:Weil}.
For more details, we refer to Serre \cite[Ch.\ V, 20, Cor.2]{SerreAlg} and to
Huggins's thesis \cite{Huggins}, which is an excellent exposition on descent of
curves.


\subsection{Weil cocycle criterion}

Throughout this section, we let $K \ext F$ be a (possibly infinite) Galois
extension, let $\Gamma = \Gal(K \ext F)$, and let $X$ be a curve over $K$ whose
field of moduli with respect to the extension $K \ext F$ equals $F$.

\begin{thm}[Weil cocycle criterion] \label{thm:Weil}
  The curve $X$ descends if and only if there exist isomorphisms
  \begin{equation} \label{eq:isoms}
    \{ \phi_{\sigma}:\lexp{\sigma}{X} \xrightarrow{\sim} X \}_{\sigma \in
    \Gamma}
  \end{equation}
  over $K$ such that
  the cocycle condition
  \begin{equation}\label{eq:WeilCoc}
    \text{$\phi_{\sigma \tau} = \phi_{\sigma} \lexp{\sigma}{\phi_{\tau}}$ for
    all $\sigma, \tau \in \Gamma$}
  \end{equation}
  holds.

  More precisely, if the isomorphisms \eqref{eq:isoms} satisfy
  \eqref{eq:WeilCoc}, then there exists a descent $X_0$ of $X$ to $F$ and an
  isomorphism $\phi_0 : X \xrightarrow{\sim} X_0$ over $K$ such that
  $\phi_{\sigma}$ is given as the coboundary
  \begin{equation}\label{eq:WeilCob}
    \phi_{\sigma} = \phi_0^{-1}\lexp{\sigma}{\phi_0}  .
  \end{equation}
\end{thm}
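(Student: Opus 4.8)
The plan is to prove the two implications separately, treating the ``more precisely'' statement as the substance of the reverse direction. Begin with the forward implication, which is a formal computation. Suppose $X$ descends, so there is a curve $X_0$ over $F$ and an isomorphism $\phi_0 : X \xrightarrow{\sim} X_0$ over $K$. Since $X_0$ is defined over $F$ we have $\lexp{\sigma}{X_0} = X_0$, so $\lexp{\sigma}{\phi_0} : \lexp{\sigma}{X} \xrightarrow{\sim} X_0$, and the composite $\phi_\sigma := \phi_0^{-1}\,\lexp{\sigma}{\phi_0} : \lexp{\sigma}{X} \to X$ is an isomorphism over $K$. Using the compatibility $\lexp{\sigma}{(\lexp{\tau}{\phi_0})} = \lexp{\sigma\tau}{\phi_0}$ of the Galois action, together with the fact that conjugation commutes with inversion, one computes
\[
\phi_\sigma \,\lexp{\sigma}{\phi_\tau} = \phi_0^{-1}(\lexp{\sigma}{\phi_0})(\lexp{\sigma}{\phi_0})^{-1}\,\lexp{\sigma\tau}{\phi_0} = \phi_0^{-1}\,\lexp{\sigma\tau}{\phi_0} = \phi_{\sigma\tau},
\]
which is \eqref{eq:WeilCoc}. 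This settles the forward direction and exhibits the isomorphisms coming from a descent as the coboundaries \eqref{eq:WeilCob}.

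For the reverse direction I would realize the descent on function fields by Galois descent of vector spaces (Speiser's theorem). Set $L = K(X)$. For each $\sigma \in \Gamma$ there is a canonical $\sigma$-semilinear isomorphism $c_\sigma : K(X) \xrightarrow{\sim} K(\lexp{\sigma}{X})$ arising from the definition of the conjugate curve as a base change along $\sigma$, and the hypothesized isomorphism $\phi_\sigma$ induces a $K$-linear isomorphism $\phi_\sigma^* : K(X) \xrightarrow{\sim} K(\lexp{\sigma}{X})$. The plan is to set
\[
f_\sigma := (\phi_\sigma^*)^{-1} \circ c_\sigma : L \to L,
\]
a $\sigma$-semilinear field automorphism fixing $F$, and to show that $\sigma \mapsto f_\sigma$ is a semilinear $\Gamma$-action.

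The key step, and the main obstacle, is to verify that $\sigma \mapsto f_\sigma$ is a genuine action, i.e. $f_\sigma f_\tau = f_{\sigma\tau}$, and that it is continuous. Multiplicativity is precisely where \eqref{eq:WeilCoc} enters: the conjugation maps are functorial and transitive, so that $c_\sigma (\phi_\tau^*)^{-1} = (\lexp{\sigma}{\phi_\tau}^*)^{-1} c_\sigma$ and $c_\sigma c_\tau = c_{\sigma\tau}$; unwinding $f_\sigma f_\tau$ then produces the contravariant pullback of $\phi_\sigma \,\lexp{\sigma}{\phi_\tau}$, which the cocycle condition rewrites as $\phi_{\sigma\tau}^*$, yielding $f_\sigma f_\tau = f_{\sigma\tau}$. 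For continuity I would use that $X$ is of finite type and hence defined over some finite subextension $K' \ext F$, and that $\phi_{\mathrm{id}} = \mathrm{id}$ together with the cocycle relation forces $\{f_\sigma\}$ to be locally constant, reducing the descent to a finite Galois subextension where Speiser's theorem applies directly.

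Granting the semilinear action, Galois descent gives that $L_0 := L^{\Gamma} = \{x \in L : f_\sigma(x) = x \text{ for all } \sigma \in \Gamma\}$ is a field with $F \subseteq L_0$ and with the natural map $L_0 \otimes_F K \to L$ an isomorphism. Hence $L_0$ is finitely generated of transcendence degree $1$ over $F$ and geometrically integral, since its base change to $K$ is the field $L = K(X)$; thus $L_0$ is the function field of a curve $X_0$ over $F$. The inclusion $L_0 \hookrightarrow L$ corresponds to an isomorphism $\phi_0 : X \xrightarrow{\sim} X_0$ over $K$, and tracing through the identifications the defining relation $f_\sigma = (\phi_\sigma^*)^{-1} c_\sigma$ translates exactly into $\phi_\sigma = \phi_0^{-1}\,\lexp{\sigma}{\phi_0}$. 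This produces both the descent $X_0$ and the coboundary formula \eqref{eq:WeilCob}, completing the proof.
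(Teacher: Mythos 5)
Your forward implication is correct, and it is the standard computation; note that the paper itself gives no proof of Theorem \ref{thm:Weil} (it defers to Serre and to Huggins's thesis), so the benchmark is the classical argument, which descends $X$ \emph{as a scheme} --- via a Galois-stable affine cover or a quasi-projective embedding --- rather than via its function field. That difference is exactly where your reverse implication has a genuine gap. Galois descent of $L = K(X)$ can only ever recover $X$ up to birational equivalence, whereas the theorem is stated, and used in the paper, for curves in the sense of Section \ref{sec:prelim}: geometrically integral, separated, of finite type, hence possibly affine and possibly singular. Your step ``the inclusion $L_0 \hookrightarrow L$ corresponds to an isomorphism $\phi_0 : X \xrightarrow{\sim} X_0$ over $K$'' is unjustified: an inclusion of function fields with $L_0 \otimes_F K \simeq L$ yields only a dominant birational map, and it is an isomorphism of curves only when $X$ is the distinguished (smooth projective) model of $L$. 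For the nodal quartic \eqref{eqn:hex1_eq} of Section \ref{sec:counter}, or for the singular marked curves allowed in Theorem \ref{thm:BirchCurve}, the curve $X_0$ your construction produces is birational to $X$ but not isomorphic to it, so it is not a descent. Repairing this requires descending the scheme structure itself --- for instance, covering $X$ by Galois-stable affine opens, applying Speiser's theorem to their coordinate rings, and gluing, which is what Weil, Serre, and Huggins do. As written, your argument proves the theorem only under the extra hypothesis that $X$ is smooth and projective.

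The continuity step is a second genuine gap, not a formality. For infinite $K \ext F$, the assertion that ``$\phi_{\mathrm{id}} = \mathrm{id}$ together with the cocycle relation forces $\{f_\sigma\}$ to be locally constant'' is false. If $K'$ is a finite subextension over which $X$ is defined, then on $\Gal(K \ext K')$ the family $\sigma \mapsto \phi_\sigma$ is a crossed homomorphism into $\Aut(X)(K)$, and nothing in \eqref{eq:WeilCoc} makes it vanish on an open subgroup. Concretely, take $F = \Q$, $K = \Qbar$, and $X$ a curve over $\Q$ with $\Aut(X)(\Qbar) = \{1, \iota\}$ where $\iota$ is defined over $\Q$; then $\phi_\sigma = \iota^{\epsilon(\sigma)}$ satisfies \eqref{eq:WeilCoc} for \emph{any} homomorphism $\epsilon : \Gamma \to \Z/2\Z$. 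Non-continuous such $\epsilon$ exist, because $\Gamma$ surjects continuously onto $\Gal\bigl(\Q(\sqrt{2},\sqrt{3},\sqrt{5},\dots) \ext \Q\bigr)$, an infinite product of copies of $\Z/2\Z$, which as an $\F_2$-vector space of uncountable dimension has index-two subgroups that are not open. For such an $\epsilon$, the family $\{\phi_\sigma\}$ is a coboundary \eqref{eq:WeilCob} for \emph{no} $\phi_0$: any $\phi_0$ is defined over a finite subextension $K_0$, so its coboundary is trivial on the open subgroup $\Gal(\Qbar \ext K_0)$, while $\epsilon$ is not. So local constancy is genuinely additional data (openness, not merely finite index, of the subgroup where the cocycle is trivial); it cannot be extracted from \eqref{eq:WeilCoc} and $\phi_{\mathrm{id}} = \mathrm{id}$ as your proof asserts, and indeed the ``more precisely'' clause of the theorem, read literally for infinite $K \ext F$, requires such a continuity hypothesis.
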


\begin{rmk}
  The Weil cocycle criterion can also be formulated for arbitrary (possibly transcendental) normal and
  separable extensions of $F$; in this case one has to add the condition that
  the subgroup $\{\sigma \in \Gamma : \lexp{\sigma}{X} = X \; \text{and} \;
  \phi_{\sigma} = \mathrm{id}_X \}$ has finite index in $\Gamma$.
\end{rmk}

\begin{rmk}
  We warn the reader not to confuse the descent $\phi_0$ with the isomorphisms
  $\phi_\sigma$ for $\sigma \in \Gamma$.
\end{rmk}

An important corollary of Theorem \ref{thm:Weil}, obtained by an immediate
uniqueness argument, is the following.

\begin{cor}\label{cor:trivautX}
  If $\Aut (X) (K)$ is trivial, then $X$ descends.
\end{cor}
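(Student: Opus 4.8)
The plan is to apply the Weil cocycle criterion (Theorem \ref{thm:Weil}) and exploit the fact that triviality of $\Aut(X)(K)$ makes the descent data unique, so that the cocycle condition holds for free rather than having to be arranged.

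First I would record that, because $F$ is the field of moduli of $X$ with respect to $K \ext F$, we have $\lexp{\sigma}{X} \simeq X$ over $K$ for every $\sigma \in \Gamma$; otherwise the stabilizer $\{\sigma \in \Gamma : \lexp{\sigma}{X} \simeq X\}$ would be a proper subgroup whose fixed field strictly contains $F$. Hence for each $\sigma$ we may choose an isomorphism $\phi_{\sigma} : \lexp{\sigma}{X} \xrightarrow{\sim} X$ over $K$ as in \eqref{eq:isoms}. The crucial point is that this choice is in fact forced: if $\phi_{\sigma}, \phi_{\sigma}'$ are two isomorphisms $\lexp{\sigma}{X} \xrightarrow{\sim} X$ over $K$, then $\phi_{\sigma}' \phi_{\sigma}^{-1} \in \Aut(X)(K)$, which is trivial by hypothesis, so $\phi_{\sigma}' = \phi_{\sigma}$. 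Thus there is a unique isomorphism $\lexp{\sigma}{X} \xrightarrow{\sim} X$ over $K$ for each $\sigma$.

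Next I would check the cocycle condition \eqref{eq:WeilCoc}. For $\sigma, \tau \in \Gamma$, both $\phi_{\sigma\tau}$ and the composite $\phi_{\sigma} \lexp{\sigma}{\phi_{\tau}}$ are isomorphisms $\lexp{\sigma\tau}{X} \xrightarrow{\sim} X$ over $K$: indeed $\lexp{\sigma}{\phi_{\tau}}$ is an isomorphism $\lexp{\sigma\tau}{X} \xrightarrow{\sim} \lexp{\sigma}{X}$ (using $\lexp{\sigma}{(\lexp{\tau}{X})} = \lexp{\sigma\tau}{X}$), followed by $\phi_{\sigma} : \lexp{\sigma}{X} \xrightarrow{\sim} X$. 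By the uniqueness just established, these two isomorphisms must coincide, which is exactly \eqref{eq:WeilCoc}. Theorem \ref{thm:Weil} then produces a descent of $X$ to $F$, proving the corollary.

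I expect no serious obstacle in this argument, since uniqueness turns the cocycle condition into a tautology rather than something to be verified by computation; this is the sense in which it is "an immediate uniqueness argument." The only point demanding any care is bookkeeping with the Galois action on conjugates, namely the identification $\lexp{\sigma}{(\lexp{\tau}{X})} = \lexp{\sigma\tau}{X}$ and the compatibility of $\lexp{\sigma}{(\,\cdot\,)}$ with composition of isomorphisms; in the infinite Galois setting one would additionally note that the canonical cocycle $(\phi_{\sigma})$ satisfies the local constancy required in the remark following Theorem \ref{thm:Weil}.
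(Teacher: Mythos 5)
Your proof is correct and is precisely the ``immediate uniqueness argument'' the paper invokes without spelling out: triviality of $\Aut(X)(K)$ makes each isomorphism $\phi_{\sigma}:\lexp{\sigma}{X} \xrightarrow{\sim} X$ unique, so both sides of \eqref{eq:WeilCoc} must coincide and Theorem \ref{thm:Weil} applies. Your closing remarks on the identification $\lexp{\sigma}{(\lexp{\tau}{X})} = \lexp{\sigma\tau}{X}$ and on local constancy of the canonical cocycle in the infinite case are exactly the right bookkeeping points, and nothing further is needed.
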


%

The Weil cocycle criterion is especially concrete when the base field $F$
equals $\R$. In this case we only have to find an isomorphism $\phi : \Xbar
\xrightarrow{\sim} X$ between $X$ and its complex conjugate $\Xbar$ and test
the single cocycle relation
\begin{equation}\label{eq:WeilCocR}
  \phi \overline{\phi} = 1
\end{equation}
that corresponds to the complex conjugation being an involution. Given any
$\phi$ as above, all other are of the form $\alpha \phi$, where $\alpha \in
\Aut (X)(\C)$.

\begin{rmk}
  More generally, if the extension $K \ext F$ is finite, there is a method to
  find all possible descents of $X$ with respect to the extension $K \ext F$;
  see Method \ref{algm:computedescent}.
\end{rmk}

In general, using Theorem \ref{thm:Weil} requires some finesse; when the
extension $K \ext F$ is infinite, it is not immediately clear through which
finite subextensions a descent cocycle could factor. However, in Corollary
\ref{cor:splitting} below we shall see that for marked curves or marked \Belyi\
maps it is possible to reduce considerations to an explicitly computable finite
subextension of $K \ext F$.

\subsection{Consequences}

Now let $(Y;\calR)$ be a marked curve or map, with $\calR$ the rigidification
data, and suppose again that the field of moduli of $(Y;\calR)$ with respect to
the extension $K \ext F$ equals $F$. In this case (as mentioned by
D\`ebes--Emsalem \cite[\S 5]{DebesEmsalem}), Theorem \ref{thm:Weil} still
applies to the marked curve $(Y;\calR)$ after replacing $\Aut(Y)$ by the
subgroup $\Aut(Y;\calR)$. Moreover, in case a Weil cocycle exists the marked
data then descend along with $Y$ to give a model $(Y_0;\calR_0)$ of $(Y;\calR)$
over $F$. We give a simple example of this general principle.

\begin{exm}\label{exm:P0gal}
  Let $(X;P)$ be a marked curve over $K$.  Suppose that $X$ has field of moduli
  $F$, with $\phi_{\sigma}:\sigma(X) \xrightarrow{\sim} X$ in
  \eqref{eq:WeilCoc} having the additional property that $\phi_{\sigma}
  (\lexp{\sigma}{P}) = P$.  Let $X_0$ be a descent of $X$ to $F$, with
  isomorphisms $\phi_0:X \xrightarrow{\sim} (X_0)_K$ over $K$. Let $P_0 =
  \phi_0(P) \in X_0(K)$. Then by \eqref{eq:WeilCob}, for all $\sigma \in
  \Gamma$ we have
  \begin{equation}\label{eq:PRat}
    \lexp{\sigma}{P_0} = \lexp{\sigma}{\phi_0} (\lexp{\sigma}{P}) = \phi_0
    \phi_{\sigma} (\sigma(P)) = \phi_0 (P) = P_0  .
  \end{equation}
  By Galois invariance, we see that $P_0 \in X_0(F)$. Therefore the marked
  curve $(X;P)$ descends.
\end{exm}

The following special case of Theorem \ref{thm:BirchCurve}, an analogue of
Corollary \ref{cor:trivautX}, is then clear from Example \ref{exm:P0gal}.

\begin{cor}\label{cor:trivaut}
  If the group $\Aut (Y,f;\calR)(K)$ is trivial, then $(Y,f;\calR)$ descends.
\end{cor}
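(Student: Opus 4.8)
The plan is to reproduce, in the marked setting, the uniqueness argument that yields Corollary \ref{cor:trivautX}. Since $F$ is the field of moduli of $(Y,f;\calR)$ with respect to $K \ext F$, for every $\sigma \in \Gamma$ the conjugate $(\lexp{\sigma}{Y},\lexp{\sigma}{f};\lexp{\sigma}{\calR})$ is isomorphic to $(Y,f;\calR)$, so there exists an isomorphism of marked maps
\[
  \phi_\sigma : (\lexp{\sigma}{Y},\lexp{\sigma}{f};\lexp{\sigma}{\calR}) \xrightarrow{\sim} (Y,f;\calR)
\]
over $K$ (a pair $(\psi_\sigma,\phi_\sigma)$, which we abbreviate to $\phi_\sigma$ as in the discussion preceding Example \ref{exm:P0gal}). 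The first step is to observe that $\phi_\sigma$ is \emph{unique}: any two choices differ by an automorphism of $(Y,f;\calR)$, that is, by an element of $\Aut(Y,f;\calR)(K)$, which is trivial by hypothesis.

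Given this uniqueness, the second step is to verify the cocycle condition \eqref{eq:WeilCoc} purely formally. For $\sigma,\tau \in \Gamma$, both $\phi_{\sigma\tau}$ and the composite $\phi_\sigma\,\lexp{\sigma}{\phi_\tau}$ are isomorphisms of marked maps from $(\lexp{\sigma\tau}{Y},\lexp{\sigma\tau}{f};\lexp{\sigma\tau}{\calR})$ to $(Y,f;\calR)$ over $K$; by uniqueness they must coincide, so $\phi_{\sigma\tau} = \phi_\sigma\,\lexp{\sigma}{\phi_\tau}$.

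The final step is to apply the Weil cocycle criterion (Theorem \ref{thm:Weil}) in the marked form recalled at the start of this subsection, with $\Aut(Y)$ replaced by $\Aut(Y,f;\calR)$: the family $\{\phi_\sigma\}$ is then a Weil cocycle, so $Y$ descends to a model $Y_0$ over $F$, and, exactly as in Example \ref{exm:P0gal}, the rigidification data descend along with it to give a model $(Y_0;\calR_0)$ over $F$. I do not expect a genuine obstacle here: once the triviality of $\Aut(Y,f;\calR)(K)$ furnishes uniqueness, the argument is entirely formal. The only point that merits attention is that, when $K \ext F$ is infinite, one should confirm that the cocycle is admissible for Theorem \ref{thm:Weil}; but since each $\phi_\sigma$ is canonically determined by $\sigma$, the assignment $\sigma \mapsto \phi_\sigma$ is compatible with the Galois action, and the criterion applies as stated.
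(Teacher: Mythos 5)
Your proposal is correct and follows essentially the same route as the paper: the paper deduces this corollary as the marked analogue of Corollary \ref{cor:trivautX} (the ``immediate uniqueness argument''), combined with the observation preceding Example \ref{exm:P0gal} that Theorem \ref{thm:Weil} applies with $\Aut(Y)$ replaced by $\Aut(Y,f;\calR)$ and that the rigidification data descend along with $Y$. Your spelled-out version—uniqueness of each $\phi_\sigma$, hence the cocycle condition, hence descent of $(Y_0;\calR_0)$ as in Example \ref{exm:P0gal}—is exactly the intended argument.
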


\begin{cor}\label{cor:trivautfP}
  Suppose that $(Y,f;\calR)$ is given by
  \begin{equation*}
   (Y,f;Q_1,\dots,Q_n;P_1,\dots,P_m)=(Y,f;\calR)
  \end{equation*}
  with $n \geq 1$, and moreover suppose that $Q_1$ is \emph{not} a ramification
  point of $f$.  Then $(Y,f;\calR)$ descends.
\end{cor}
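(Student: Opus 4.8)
The plan is to deduce this from Theorem~\ref{thm:BirchCurve}: once I know that $Q_1$ is a \emph{smooth} point of $Y$, the two standing hypotheses of that theorem (namely $m\geq 1$, here $n\geq 1$, together with the presence of a smooth marked point on the source) are met, and descent of $(Y,f;\calR)$ to its field of moduli follows at once. So the whole content of the corollary is to extract smoothness of $Q_1$ from the assumption that it is not a ramification point.

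To see that the hypothesis really does force $Q_1$ to be smooth, recall how a ramification point is detected: the ramification index at a point $Q$ is $\ord_Q(f^{*}\pi)$ for a uniformizer $\pi$ at $f(Q)$, and this makes sense only when $\mathcal{O}_{Y,Q}$ is a discrete valuation ring, i.e. when $Q$ is a regular—hence (as $F=\Fbar$) smooth—point of $Y$. Thus ``$Q_1$ is not a ramification point of $f$'' is to be read as ``$Q_1$ is a smooth point at which $f$ is unramified,'' and in particular $Q_1$ is smooth. It is worth noting that smoothness is genuinely part of this hypothesis and not a free consequence of being unramified: taking $f=\mathrm{id}$ and $Q_1$ a singular point, the identity map is unramified everywhere, so the corollary would otherwise assert descent of an arbitrary marked curve with a singular marked point, contradicting the counterexamples of Theorem~B. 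With $Q_1$ smooth established, Theorem~\ref{thm:BirchCurve} applies and $(Y,f;\calR)$ descends.

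Alternatively—and more in keeping with the placement of this corollary among the direct Weil-cocycle criteria, before the branch machinery is developed—one can argue without the full force of Theorem~\ref{thm:BirchCurve}. Since the field of moduli equals $F$, for each $\sigma\in\Gamma$ I would choose an isomorphism of marked maps $(\psi_\sigma,\phi_\sigma)\colon {}^{\sigma}(Y,f;\calR)\xrightarrow{\sim}(Y,f;\calR)$; by definition these already send ${}^{\sigma}Q_1$ to $Q_1$, hence ${}^{\sigma}(f(Q_1))$ to $f(Q_1)$. The obstruction to adjusting the $(\psi_\sigma,\phi_\sigma)$ by elements of $\Aut(Y,f;\calR)$ into a genuine Weil cocycle in the sense of Theorem~\ref{thm:Weil} lies in $\Aut(Y,f;\calR)(K)$, and once a cocycle fixing the marking is produced, the marked data descend by the marked analogue of Example~\ref{exm:P0gal}. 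The role of unramifiedness is that near $Q_1$ the map $f$ is a local isomorphism onto its image, so the cotangent direction at $Q_1$ is transported from the base; this canonical local datum is Galois-stable and serves to pin the $(\psi_\sigma,\phi_\sigma)$ down enough to enforce the cocycle relation.

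The main obstacle is exactly this trivialization—showing that the coboundary obstruction in $\Aut(Y,f;\calR)(K)$ vanishes, equivalently (in the reduction of the first two paragraphs) that ``not a ramification point'' genuinely supplies the smooth marked point that Theorem~\ref{thm:BirchCurve} requires. Everything else is formal: the existence of the isomorphisms $(\psi_\sigma,\phi_\sigma)$ is immediate from the field-of-moduli hypothesis, and the descent of the marking once a cocycle exists is already recorded in Theorem~\ref{thm:Weil} and Example~\ref{exm:P0gal}.
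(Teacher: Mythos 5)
Your main argument is correct, but it takes a genuinely different route from the paper's. The paper disposes of this corollary in one line, entirely inside the Weil-cocycle framework of Section \ref{sec:Weil}: it observes that $\Aut(Y,f;\calR)(K) \leq \Aut(Y,f)(K)$, that the latter group acts freely on those orbits that contain a non-ramification point (the standard fact that an automorphism of a cover fixing a point where the cover is unramified acts trivially on the complete local ring, hence on the function field, hence is trivial), and that every element of $\Aut(Y,f;\calR)(K)$ fixes $Q_1$; therefore $\Aut(Y,f;\calR)(K)$ is trivial, and descent follows from Corollary \ref{cor:trivaut}, where the cocycle condition is automatic because the isomorphisms are unique. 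You instead read smoothness of $Q_1$ out of the hypothesis and invoke Theorem \ref{thm:BirchCurve}. That deduction is sound: Theorem \ref{thm:BirchCurve} is proved via Theorems \ref{thm:BirchExp} and \ref{thm:infiniteaut} with no reference to this corollary, so there is no circularity, and your remark that smoothness must be built into the hypothesis (otherwise $f=\mathrm{id}$ with a singular marked point would contradict Theorem B) is a correct and worthwhile observation that the paper leaves implicit. What your route costs is proportion: it invokes the full branch machinery of Section \ref{sec:branch}, it discards the non-ramification hypothesis entirely (Theorem \ref{thm:BirchCurve} needs only smoothness), and it inverts the expository purpose of the corollary, which is to record---before branches are ever introduced---the elementary principle that a rigidification which kills all automorphisms kills the descent obstruction. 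What it buys is robustness: you need no free-action claim at all, only smoothness.

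Your alternative sketch in the third paragraph is the argument closest in spirit to the paper's, but as written it has a gap: asserting that the transported cotangent direction at $Q_1$ is Galois-stable and ``pins down'' the $(\psi_\sigma,\phi_\sigma)$ enough to enforce the cocycle relation is a heuristic, not a proof. The precise statement that does the work is the triviality of $\Aut(Y,f;\calR)(K)$ described above: once that group is trivial, the isomorphism of marked maps from each conjugate is \emph{unique}, so the cocycle condition holds automatically and Corollary \ref{cor:trivaut} finishes the proof. No tangential, cotangential, or branch data are needed at this stage; supplying such data canonically is exactly what Section \ref{sec:branch} does later, in order to handle the general smooth (possibly ramified) marked point.
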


\begin{proof}
  We have $\Aut(Y,f;\calR)(K) \leq \Aut(Y,f)(K)$ and the latter group acts
  freely on those orbits that contains a non-ramifying point, so indeed
  $\Aut(Y,f;\calR)(K)$ is trivial.
\end{proof}

The moral is that when rigidification trivializes the automorphism group, then
the obstruction to descent vanishes.  At the other extreme, when the original
curve descends and the marked curve has the same automorphism group, then the
marked curve descends as well.

\begin{prop}\label{prop:auteq}
  Suppose that $\Aut(Y,f;\calR)(K) = \Aut (Y)(K)$. Let $F$ be the field of
  moduli of $(Y,f;\calR)$. Then $(Y,f;\calR)$ descends if $Y$ descends, and any
  descent datum for $Y$ gives rise to one for $(Y,f;\calR)$.
\end{prop}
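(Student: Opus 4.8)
The plan is to apply the Weil cocycle criterion (Theorem~\ref{thm:Weil}) in the form for marked maps discussed before Example~\ref{exm:P0gal}: it suffices to produce isomorphisms of marked maps $\lexp{\sigma}{(Y,f;\calR)} \xrightarrow{\sim} (Y,f;\calR)$, one for each $\sigma \in \Gamma = \Gal(K \ext F)$, satisfying the cocycle condition \eqref{eq:WeilCoc}. The input is a descent datum for the bare curve $Y$. Since the stabilizer of $(Y,f;\calR)$ is contained in that of $Y$, we have $M(Y) \subseteq F$, so such a datum restricts to a family of curve isomorphisms $\{\phi_\sigma : \lexp{\sigma}{Y} \xrightarrow{\sim} Y\}_{\sigma \in \Gamma}$ satisfying \eqref{eq:WeilCoc}. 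The whole point will be to promote this family, for free, to a Weil cocycle of marked maps.

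The key observation is that under the hypothesis $\Aut(Y,f;\calR)(K) = \Aut(Y)(K)$, every curve isomorphism $\phi_\sigma : \lexp{\sigma}{Y} \xrightarrow{\sim} Y$ is the underlying map of a (unique) isomorphism of marked maps. Indeed, because $F$ is the field of moduli of $(Y,f;\calR)$, for each $\sigma \in \Gamma$ there is at least one isomorphism of marked maps $(\psi_\sigma, \chi_\sigma) : \lexp{\sigma}{(Y,f;\calR)} \xrightarrow{\sim} (Y,f;\calR)$; then $\phi_\sigma \psi_\sigma^{-1} \in \Aut(Y)(K) = \Aut(Y,f;\calR)(K)$, so it underlies an automorphism of the marked map, and composing shows that $\phi_\sigma$ itself underlies an isomorphism of marked maps $(\phi_\sigma, \chi_\sigma)$. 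Moreover the target component $\chi_\sigma$ is uniquely determined by $\phi_\sigma$: it is forced on the image of $f$ by the commuting square $\chi_\sigma \lexp{\sigma}{f} = f \phi_\sigma$, and $f$ is dominant. Thus, via the underlying-map functor, the curve isomorphisms $\lexp{\sigma}{Y} \xrightarrow{\sim} Y$ and the marked-map isomorphisms $\lexp{\sigma}{(Y,f;\calR)} \xrightarrow{\sim} (Y,f;\calR)$ are in bijection.

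First I would use this to lift the given family $\{\phi_\sigma\}$ to pairs $\{(\phi_\sigma, \chi_\sigma)\}$, and then check that they form a Weil cocycle. The $\phi$-component of \eqref{eq:WeilCoc} holds by assumption, and the $\chi$-component follows formally from uniqueness: both $\chi_{\sigma\tau}$ and $\chi_\sigma \lexp{\sigma}{\chi_\tau}$ satisfy the defining relation $(-) \circ \lexp{\sigma\tau}{f} = f \circ \phi_{\sigma\tau}$, the second by a short computation using $\lexp{\sigma\tau}{f} = \lexp{\sigma}{(\lexp{\tau}{f})}$, the $\sigma$-equivariance of composition, and the relation $\phi_{\sigma\tau} = \phi_\sigma \lexp{\sigma}{\phi_\tau}$. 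By uniqueness the two target maps agree, so $\chi_{\sigma\tau} = \chi_\sigma \lexp{\sigma}{\chi_\tau}$ and $\{(\phi_\sigma,\chi_\sigma)\}$ is a cocycle of marked maps.

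Finally I would invoke the marked version of Theorem~\ref{thm:Weil} to conclude that $(Y,f;\calR)$ descends to $F$, with the resulting descent produced directly from the given one for $Y$, as asserted. The only genuinely delicate point is the bookkeeping in the third paragraph: passing from a cocycle of curve isomorphisms to a cocycle of marked-map isomorphisms hinges on the uniqueness of the induced target maps $\chi_\sigma$, which is exactly what lets the target components inherit the cocycle relation automatically. The automorphism hypothesis does all of the conceptual work, collapsing the distinction between isomorphisms of $Y$ and of $(Y,f;\calR)$; no assumption on the genus or the characteristic is needed.
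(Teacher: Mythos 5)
Your proof is correct, and it turns on the same key mechanism as the paper's: the hypothesis $\Aut(Y,f;\calR)(K)=\Aut(Y)(K)$ collapses the distinction between isomorphisms of $Y$ and isomorphisms of $(Y,f;\calR)$, because any curve isomorphism differs from a field-of-moduli isomorphism by an automorphism, which by hypothesis respects the rigidification. However, your bookkeeping runs in the opposite direction from the paper's, so a comparison is worthwhile. You work upstream, on the cocycle side: each $\phi_\sigma$ in the given curve cocycle is promoted to a marked-map isomorphism $(\phi_\sigma,\chi_\sigma)$ via the automorphism $\phi_\sigma\psi_\sigma^{-1}\in\Aut(Y)(K)=\Aut(Y,f;\calR)(K)$; the target components $\chi_\sigma$ are unique because $f$ is dominant, hence inherit the cocycle relation formally; and you then cite the marked version of Theorem \ref{thm:Weil}. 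The paper works downstream, on a chosen model: it fixes a descent $Y_0$ of $Y$ with coboundary $\psi_0$, treats $f$ as part of the rigidification $\calR'$, transports it to $\calR'_0=\psi_0(\calR')$, and verifies $\lexp{\sigma}{\calR'_0}=\calR'_0$ directly -- the discrepancy $\psi'_\sigma=\psi_0\,\psi_\sigma\,\lexp{\sigma}{\psi_0^{-1}}$ lies in $\Aut(Y_0)(K)=\Aut(Y_0;\calR'_0)(K)$, carries $\lexp{\sigma}{\calR'_0}$ to $\calR'_0$, and also fixes $\calR'_0$, forcing equality. (When your cocycle is the coboundary of $\psi_0$, the paper's $\psi'_\sigma$ is exactly the $\psi_0$-conjugate of the inverse of your discrepancy $\phi_\sigma\psi_\sigma^{-1}$.) The trade-off: your route handles the target curve $X$ and the induced maps $\chi_\sigma$ explicitly -- a point the paper glosses over by absorbing $f$ into the rigidification data -- and it literally produces the descent datum for $(Y,f;\calR)$ from the one for $Y$, as the statement promises; but it leans on the marked Weil criterion, which the paper only asserts in section 2.2, whereas the paper's proof avoids that citation by checking rationality of the transported data on the already-descended $Y_0$ by hand. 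One cosmetic remark: in the relative setting of the proposition, the field of moduli of $Y$ with respect to $K \ext F$ is exactly $F$ (the stabilizer of $(Y,f;\calR)$ is all of $\Gal(K \ext F)$ and is contained in that of $Y$), so a descent datum for $Y$ is already indexed by $\Gal(K \ext F)$ and your restriction step, while harmless, is not needed.
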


\begin{proof}
  Let $Y_0$ be a descent of $Y$ to $F$ with isomorphism $\psi_0 : Y
  \xrightarrow{\sim} Y_0$ over $K$, and let $\sigma \in \Gal (K \ext F)$. We
  now interpret the map $f$ as a further rigidification of the curve $Y$, and
  correspondingly write $(Y;\calR')$ for $(Y,f;\calR)$. By hypothesis, there
  exists an isomorphism $\psi_{\sigma} : \lexp{\sigma}{Y} \xrightarrow{\sim} Y$
  over $K$ that sends the conjugate rigidification $\lexp{\sigma}{\calR'}$ to
  $\calR'$. Composing, we get an isomorphism
  \begin{equation}
    \psi'_{\sigma} : \lexp{\sigma}{Y_0}
    \xrightarrow{\lexp{\sigma}{\psi_0^{-1}}} \lexp{\sigma}{Y}
    \stackrel{\psi_{\sigma}}{\longrightarrow} Y
    \stackrel{\psi_0}{\longrightarrow} Y_0 .
  \end{equation}
  Moreover, if we let $\calR'_0 = \psi_0 (\calR')$, then the cocycle relation
  $\psi_{\sigma}=\psi_0^{-1} \lexp{\sigma}{\psi_0}$ shows that $\psi'_{\sigma}$
  sends $\sigma (\calR'_0)$ to $\calR'_0$: we have
  \begin{equation}
    \psi'_{\sigma} (\calR'_0) = \psi_0 (\psi_{\sigma}
    (\lexp{\sigma}{\psi_0^{-1}} (\psi_0 (\calR')))) = \psi_0 (\psi_{\sigma}
    (\psi_{\sigma}^{-1} (\calR'))) = \psi_0 (\calR') = \calR'_0 .
  \end{equation}
  Now we have assumed that $Y_0$ is defined over $F$. Therefore
  $\lexp{\sigma}{Y_0} = Y_0$, which shows that $\psi'_{\sigma}$ actually
  belongs to $\Aut(Y_0)(K)$. Since $\Aut(Y;\calR')(K) = \Aut (Y)(K)$ we equally
  well have $\Aut(Y_0;\calR'_0)(K) = \Aut (Y_0)(K)$. Hence in fact
  $\lexp{\sigma}{\calR'_0} = (\psi'_{\sigma})^{-1} (\calR'_0) = \calR'_0$.
  Since $\sigma$ was arbitrary, we see that the rigidification $\calR'_0$ is
  rational on $Y_0$, as desired.
\end{proof}

\begin{rmk}
  In case of proper inclusions $\{1\} \subsetneq \Aut (X;\calR)(K) \subsetneq
  \Aut (X)(K)$, it is possible that $X$ may descend to $F$ while $(X;\calR)$
  does not: we will see an example of this in Section \ref{sec:counter} which
  is minimal in the sense that the chain of inclusions
  \begin{equation*}
    \{1\} \subsetneq \Aut (X;\calR)(\C) \simeq \Z / 2 \Z \subsetneq \Aut (X)
    (\C) \simeq \Z / 4 \Z
  \end{equation*}
  is as small as possible.
\end{rmk}

\section{Branches}\label{sec:branch}

The geometric equivalent of a fundamental tool of D\`ebes--Emsalem
\cite{DebesEmsalem} is the consideration of what we will define as the
\emph{branches} of a morphism of curves over a point $P$. For a map tamely
ramified at $P$, branches can be interpreted by embeddings into certain rings
of Puiseux series. We revisit this definition in a more general geometric
context, which will extend to the wildly ramified case.

\subsection{Definitions}

Let $X$ be a curve over $F$, and let $P \in X (\Fbar)$ be a geometric point of
$X$. Let $\scrO_{X_{\et},P}$ be the local ring of $X$ at $P$ for the \'etale
topology. Using idempotents, we have a canonical decomposition
\begin{equation}\label{eq:odec}
  \scrO_{X_{\et},P} = \prod_i \scrO_{X_{\et},P,i}
\end{equation}
into a finite product of domains. Let $\scrP_{X_{\et},P,i}$ be the integral
closure of the domain $\scrO_{X_{\et},P,i}$ in
$\Frac(\scrO_{X_{\et},P,i})^{\mathrm{sep}}$, the separable closure of its
quotient field. We get a ring
\begin{equation}\label{eq:pdec}
  \scrP_{X_{\et},P} = \prod_i \scrP_{X_{\et},P,i}
\end{equation}
along with a canonical morphism $\Spec \scrP_{X_{\et},P} \to X$.

\begin{defn}\label{def:branch}
  Let $f : Y \to X$ be a map of curves over $F$. A \defi{branch} of $f$ over
  $P$ is a morphism $b:\Spec \scrP_{X_{\et},P} \to Y$ such that the following
  diagram commutes:
  \begin{equation} \label{diag:branch}
    \begin{aligned}
      \xymatrix
      {
        & Y \ar[d]^{f} \\
        \Spec \scrP_{X_{\et},P} \ar[r] \ar@{-->}[ur]^{\quad b} & X
      }
    \end{aligned}
  \end{equation}
  The set of branches of $f$ over $P$ is denoted $B (f,P)$.
\end{defn}

\begin{rmk}
  Geometrically, a branch of $f$ can be seen as an equivalence class of
  sections $V \to Y \stackrel{f}{\to} X$, where the composition $V \to X$ is a
  ``separable neighborhood'' of $P$ in the sense that it factors as $V \to U
  \to X$, where $V \to U$ is separable and where $U \to X$ is an étale
  neighborhood of $P$. Note that we could also have defined branches by using
  the total quotient ring of $\scrO_{X_{\et},x}$ instead of explicitly
  employing the factorization in the current definition. Another alternative at
  a smooth point is to use the completion of the usual local ring $\scrO_{X,
  x}$.
\end{rmk}

Definition \ref{def:branch} is most concrete when $P$ is a smooth point of $X$.
In this case $\scrP_{X_{\et},P}$ is the closure of $\scrO_{X_{\et},P}$ in
$\Frac(\scrO_{X,P})^{\mathrm{sep}}$. The ring $\scrO_{X_{\et},P}$ is the
subring of $\Fbar [[ t ]]$ consisting of power series that are algebraic over
the field of rational functions $\Fbar (t)$ in $t$ \cite[Prop.\ 4.10]{Milne}.
Therefore, if $\opchar F = 0$, then the elements of $\scrP_{X_{\et},P}$ can be
seen as elements of the ring of Puiseux series $\Fbar [[t^{1/\infty}]]$, or in
other words as elements of the field of Puiseux series $\Fbar ((t^{1/\infty}))$
whose monomials all have non-negative exponent. (This isomorphism explains our
notation for the ring $\scrP_{X_{\et},P}$.)

If $\opchar F = p$, then wild ramification can occur, as for example when
considering Artin--Schreier extensions; in this case the ring
$\scrP_{X_{\et},P}$ is no longer simply a subring of a field of Puiseux series,
and we have to use the field of \defi{generalized power series} $F ((t^{\Q}))$
instead. These are the formal linear combinations $\sum_{i \in \Q} c_i t^i$
whose support is a well-ordered subset of $\Q$. Kedlaya \cite[Theorem
10.4]{Kedlaya} has characterized the integral closure of $F (t)$ inside $F
((t^{\Q}))$ as those generalized power series that are \defi{$p$-automatic}; in
particular, branches will give rise to such $p$-automatic power series. A
practical illustration of how these generalized power series are obtained is
given in Example \ref{exm:genpow}.

\begin{rmk}
  By taking the fiber over the closed point of $\Spec \scrP_{X_{\et},P}$ (in
  the case of a valuation ring in a field of Puiseux series, we are setting
  $t=0$), we recover a lift of the point $P \in X(\Fbar)$. We can see $b$ as an
  infinitesimal thickening of this lift; the fact that we cannot use the local
  ring for the \'etale topology reflects that we need slightly thinner
  thickenings than that in this topology to obtain enough sections.
\end{rmk}

In general, the decomposition in \eqref{eq:odec} has more than one factor.
Using the categorical properties of products we see that giving a branch
amounts to specifying, for every $i$, a morphism $b_i$ making the following
diagram commute:
\begin{equation} \label{diag:branchi}
  \begin{aligned}
    \xymatrix
    {
      \Spec \scrP_{X_{\et},P,i} \ar@{-->}[r]^(.7){b_i} \ar[d] & Y \ar[d]^{f} \\
      \Spec \scrP_{X_{\et},P} \ar[r] & X
    }
  \end{aligned}
\end{equation}
In this way, we have effectively passed to the normalization of $X$. For the
identity map $X \to X$, the $b_i$ give what is classically called the set of
branches of $X$ at $P$.

\begin{rmk}
  Alternatively, one can first define branches in the case of a smooth point
  and then for a general morphism $f : Y \to X$ at a point $P$ to be a branch
  of the induced map $\widetilde{f} : \widetilde{Y} \to \widetilde{X}$ of
  normalizations at some point $\widetilde{P} \in \widetilde{X} (\Fbar)$ over
  $P$.
\end{rmk}

From here on, we will restrict to the case where $P \in X (\Fbar)$ is a smooth
point.

\subsection{Galois action and descent}

Given an element $\sigma$ of the absolute Galois group $\Gal (\Fbar \ext F)$,
there is a canonical map of sets
\begin{equation}\label{eq:Galact}
  \begin{split}
    B(f,P) & \to B(\lexp{\sigma}{f},\lexp{\sigma}{P}) \\
    b & \mapsto \lexp{\sigma}{b}
  \end{split}
\end{equation}
Here $\lexp{\sigma}{f}$ is the morphism $\lexp{\sigma}{Y} \to \lexp{\sigma}{X}$
induced by the action of $\sigma$ on the coefficients of $f$. The map on
branches is defined as follows. Given a branch $b : \Spec \scrP_{X_{\et},P} \to
Y$ of $f$ over $P$, we can conjugate it to obtain a morphism $\lexp{\sigma}{b}
: \Spec \lexp{\sigma}{\scrP_{X_{\et},P}} \to \lexp{\sigma}{Y}$.  We compose
with the canonical isomorphism
\begin{equation}
  \lexp{\sigma}{\scrP_{X_{\et},P}} \simeq
  \scrP_{\lexp{\sigma}{X}_{\et},\lexp{\sigma}{P}}
\end{equation}
where the $\Fbar$-algebra structure on this ring has been changed by
conjugation with $\sigma$.

In the upcoming proposition, we show that the set $B (f,P)$ has two very
pleasant properties, which should be thought of as generalizing the properties
of usual fibers over non-branch points (cf.\ Corollary \ref{cor:trivautfP}).

\begin{prop}\label{prop:constandfaithful}
  Let $f : Y \to X$ be a proper separable map of curves of degree $d$, and let
  $P \in X(\Fbar)$ be a smooth geometric point of $X$. Then the set $B (f, P)$
  has cardinality $d$, and the action of $\Aut (Y,f)$ on $B (f, P)$ is
  faithful.
\end{prop}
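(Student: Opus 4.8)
The plan is to reduce the statement to a computation with the étale-local algebra at $P$ and then to pure field theory. Write $\scrO = \scrO_{X_{\et},P}$ and $\scrP = \scrP_{X_{\et},P}$, and set $K = \Frac(\scrO)$. Since $P$ is smooth the decomposition \eqref{eq:odec} has a single factor, so $\scrO$ is a (henselian) discrete valuation ring, $K$ is a field, and $\scrP$ is by construction the integral closure of $\scrO$ in $K^{\mathrm{sep}}$, a normal domain with $\Frac(\scrP) = K^{\mathrm{sep}}$. Because $f$ is proper and quasi-finite it is finite, so base change along the canonical morphism $\Spec \scrO \to X$ presents $Y \times_X \Spec \scrO$ as $\Spec B$ for a finite $\scrO$-algebra $B$, every element of which is integral over $\scrO$. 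Inverting $\scrO \setminus \{0\}$ and using that $f$ is separable, $B \otimes_{\scrO} K \cong \Fbar(Y) \otimes_{\Fbar(X)} K$ is an étale $K$-algebra of rank $d = [\Fbar(Y):\Fbar(X)]$.

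For the cardinality I would identify branches with algebra homomorphisms. A branch $b$ is exactly a section of $Y \times_X \Spec \scrP \to \Spec \scrP$, equivalently an $\scrO$-algebra homomorphism $B \to \scrP$. Since $\scrP \subseteq K^{\mathrm{sep}}$ and, conversely, every element of $B$ is integral over $\scrO$ and hence maps into the integral closure $\scrP$ under any $\scrO$-algebra map $B \to K^{\mathrm{sep}}$, one has
\[
  B(f,P) = \operatorname{Hom}_{\scrO\textup{-alg}}(B,\scrP)
         = \operatorname{Hom}_{\scrO\textup{-alg}}(B,K^{\mathrm{sep}})
         = \operatorname{Hom}_{K\textup{-alg}}(B \otimes_{\scrO} K, K^{\mathrm{sep}}).
\]
An étale $K$-algebra of rank $d$ admits exactly $d$ homomorphisms into the separably closed field $K^{\mathrm{sep}}$, so $\# B(f,P) = d$.

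For faithfulness, note that an element $\psi \in \Aut(Y,f)$ (an automorphism of $Y$ over $X$) induces an automorphism $\psi^{\sharp}$ of $\Fbar(Y)$ fixing $\Fbar(X)$, and acts on branches by $b \mapsto \psi \circ b$. Under the identification above each branch corresponds to an embedding $\iota_b : \Fbar(Y) \hookrightarrow K^{\mathrm{sep}}$, and this action sends $\iota_b$ to $\iota_b \circ \psi^{\sharp}$. If $\psi$ fixes even a single branch, then $\iota_b \circ \psi^{\sharp} = \iota_b$; as $\iota_b$ is a homomorphism of fields it is injective, forcing $\psi^{\sharp} = \mathrm{id}$, and since $Y$ is integral and separated an automorphism is determined by its effect on $\Fbar(Y)$, so $\psi = \mathrm{id}$. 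Thus every stabilizer is trivial and the action is in fact free, a fortiori faithful. The same argument handles a general automorphism $(\psi,\phi)$ fixing $P$ after conjugating the source ring $\scrP$ by $\phi$ as in \eqref{eq:Galact}.

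The main obstacle I anticipate is the bookkeeping in the first two steps: justifying that a branch is precisely an $\scrO$-algebra homomorphism $B \to \scrP$ (finiteness of $f$ together with the universal property of the fiber product), and that every such homomorphism automatically lands in $\scrP$ by integrality, while checking that smoothness of $P$ is exactly what makes $\scrO$ a domain so that $K^{\mathrm{sep}}$ is defined and $B \otimes_{\scrO} K$ is étale of rank $d$. Once these identifications are in place, both the count and the freeness of the action follow immediately from separability.
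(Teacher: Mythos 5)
Your proof is correct, but it reaches the conclusion by a different mechanism than the paper. The paper stays geometric: it passes to the generic fiber over $K = \Frac(\scrO_{X_{\et},P})$, uses separability (``Galois theory'') to produce $d$ points of $Y$ over $L = K^{\mathrm{sep}}$, and then invokes the valuative criterion of properness --- applied to the discrete valuation rings obtained as integral closures of $\scrO_{X_{\et},P}$ in the finite subextensions of $L$ over which those points are defined --- to extend each point uniquely to a branch. You instead first convert properness into finiteness (proper plus quasi-finite implies finite, via Zariski's main theorem), which makes the pullback $Y \times_X \Spec \scrO_{X_{\et},P}$ affine, and then let integrality do the work: any $\scrO$-algebra homomorphism from the finite algebra $B$ into $K^{\mathrm{sep}}$ automatically lands in the integral closure $\scrP$, so branches are exactly the $K$-algebra homomorphisms from the rank-$d$ \'etale algebra $B \otimes_{\scrO} K$ into $K^{\mathrm{sep}}$, of which there are $d$. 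The two extension arguments are cousins --- the valuative criterion for a finite morphism is essentially the statement about integral closure --- but yours trades the geometric criterion for pure commutative algebra, at the mild cost of invoking Zariski's main theorem and of needing quasi-finiteness, which you assert rather than prove (it is standard: a fiber of a nonconstant map of curves is a proper closed subset of an integral curve, hence finite; the paper makes the analogous remark for projective curves in section 1). Your treatment of the second assertion is also sharper than the paper's one-line remark that the action on branches is a specialization of the action on the generic fiber: you show the stabilizer of a \emph{single} branch is trivial, i.e.\ the action is free, not merely faithful. That stronger conclusion is in fact what the torsor statement in Theorem \ref{thm:BirchExp}(a) ultimately requires, since for a Galois cover freeness together with $\#B(\pi,S) = \#G$ yields transitivity, whereas faithfulness alone would not.
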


\begin{proof}
  Let $R = \scrO_{X_{\et},P}$ and $S=\scrP_{X_{\et},P}$ be as above. Since $P$
  is smooth, the rings $R$ and $S$ are integral domains; we denote their
  quotient fields by $K$ and $L$.  By definition, $L$ is a separable closure of
  $K$, and the point $P$ yields canonically a point $P_K \in X(K)$. Pulling
  back the map $f$ by $P_K$, we obtain another separable map of degree $d$,
  corresponding to a field extension of $K$.  By Galois theory, there are $d$
  points $Q_1 , \dots , Q_d$ in $Y (L)$ that lie over $P$.

  We claim that the points $Q_1,\dots,Q_d$ are the generic points of unique
  points in $Y (S)$. To see this, note that every point $Q_i$ is in fact
  defined over a finite subextension $K_i$ of $K$ contained in $L$. Let $R_i$
  be the integral closure of the strictly henselian discrete valuation ring $R$
  in $K_i$. Then $R_i$ is again a discrete valuation ring.  The morphism $f$ is
  proper because $Y$ and $X$ both are.  Therefore, by the valuative criterion
  of properness applied to the morphism $f$, the points $Q_i \in Y (K_i)$
  extend uniquely to points in $Y (R_i)$, proving our claim.

  To conclude, we show that the action of $\Aut(Y,f)$ is faithful: this follows
  since it is a specialization of the action of the automorphism group on the
  generic fiber.
\end{proof}

The following theorem can then be applied to give a constructive proof of
Theorem A (Theorem \ref{thm:BirchCurve}) in the case where the automorphism
group of the marked map is finite.  We finish the proof below.

\begin{thm}\label{thm:BirchExp}
  Let $(Y,f;\calR)$ be a marked curve over $\Fbar$ with finite automorphism
  group $G = \Aut (Y,f;\calR)(\Fbar)$. Let $\pi : Y \to G \backslash Y = W$ be
  the quotient map.
  \begin{enumalph}
    \item Let $Q \in Y (\Fbar)$ be smooth. Then $S = \pi (Q)$ is a smooth point
      of $W$, and the set of branches $B(\pi,S)$ is a torsor under $G$.
    \item Suppose that $\calR$ contains at least one smooth point $Q$ on $Y$.
      Then $(Y,f;\calR)$ descends.

      More precisely, let $S = \pi (Q)$, and suppose that $F$ is the field of
      moduli of $(Y,f;\calR)$.  Then given $b \in B(\pi,S)$, for all $\sigma
      \in \Gamma=\Gal(\Fbar \ext F)$ there exists a unique morphism
      $\phi_{\sigma}:\lexp{\sigma}{Y} \xrightarrow{\sim} Y$ such that
      $\lexp{\sigma}{b}$ is sent to $b$, and the collection $\left\{
      \phi_{\sigma} \right\}_{\sigma \in \Gamma}$ defines a Weil cocycle.
  \end{enumalph}
\end{thm}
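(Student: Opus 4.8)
The plan is to prove (a) first, since the torsor structure it produces is exactly what drives the cocycle argument in (b). For the smoothness of $S = \pi(Q)$, I would work in the local ring: the \'etale-local ring of $W$ at $S$ is the ring of invariants of the discrete valuation ring $\scrO_{Y_{\et},Q}$ under the stabilizer $G_Q \leq G$, and the ring of invariants of a discrete valuation ring under a finite group of automorphisms is again a discrete valuation ring; as the residue field at $S$ is $\Fbar$, regularity upgrades to smoothness. To see that $B(\pi,S)$ is a $G$-torsor, I would combine a cardinality count with transitivity. The quotient map $\pi$ is finite (hence proper) and separable of degree $|G|$, so Proposition \ref{prop:constandfaithful} gives $|B(\pi,S)| = |G|$; moreover $G$ acts on $B(\pi,S)$ because every $g \in G$ satisfies $\pi g = \pi$, so post-composition $b \mapsto g b$ preserves the defining diagram \eqref{diag:branch}. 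Finally, since $\pi$ is Galois the extension $\Fbar(Y)/\Fbar(W)$ is Galois with group $G$, and in the notation of the proof of Proposition \ref{prop:constandfaithful} the branches correspond bijectively to the $\Fbar(W)$-embeddings of $\Fbar(Y)$ into the separable closure $L$, on which $G$ acts simply transitively by Galois theory; a transitive action on a set of cardinality $|G|$ is automatically simply transitive, so $B(\pi,S)$ is a torsor.

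For (b), I would use the torsor to rigidify the choice of descent isomorphisms. Fix $b \in B(\pi,S)$. Because $F$ is the field of moduli, each $\sigma \in \Gamma$ admits some isomorphism of marked maps $\psi_\sigma : (\lexp{\sigma}{Y},\lexp{\sigma}{f};\lexp{\sigma}{\calR}) \xrightarrow{\sim} (Y,f;\calR)$. As $G = \Aut(Y,f;\calR)$ is intrinsic, $\psi_\sigma$ is equivariant for the conjugated and original $G$-actions and so descends to an isomorphism $\lexp{\sigma}{W} \xrightarrow{\sim} W$; since $Q$ is a marked point we have $\psi_\sigma(\lexp{\sigma}{Q}) = Q$, hence this descended map carries $\lexp{\sigma}{S}$ to $S$. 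Consequently $\psi_\sigma$ induces a bijection $B(\lexp{\sigma}{\pi},\lexp{\sigma}{S}) \to B(\pi,S)$, and precomposing with the conjugation map \eqref{eq:Galact} sends $\lexp{\sigma}{b}$ to an element of $B(\pi,S)$. Every isomorphism of marked maps $\lexp{\sigma}{Y} \to Y$ has the form $g\psi_\sigma$ with $g \in G$, and because $B(\pi,S)$ is a $G$-torsor there is a unique $g = g_\sigma$ making $\phi_\sigma := g_\sigma\psi_\sigma$ send $\lexp{\sigma}{b}$ to $b$; freeness of the torsor action gives the uniqueness asserted in the statement.

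It then remains to verify the Weil cocycle relation $\phi_{\sigma\tau} = \phi_\sigma\lexp{\sigma}{\phi_\tau}$, which I would prove by the uniqueness just established. Both sides are isomorphisms of marked maps $\lexp{\sigma\tau}{Y} \to Y$ (the right-hand side as a composite of such), so it suffices to check that each sends $\lexp{\sigma\tau}{b}$ to $b$. The left-hand side does so by definition. For the right-hand side, applying $\sigma$ to the identity $\phi_\tau(\lexp{\tau}{b}) = b$ and invoking the functoriality $\lexp{\sigma}{\lexp{\tau}{b}} = \lexp{\sigma\tau}{b}$ of the action \eqref{eq:Galact} yields $\lexp{\sigma}{\phi_\tau}(\lexp{\sigma\tau}{b}) = \lexp{\sigma}{b}$, and then $\phi_\sigma(\lexp{\sigma}{b}) = b$ gives the claim. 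Hence $\{\phi_\sigma\}_\sigma$ is a Weil cocycle, and Theorem \ref{thm:Weil}, in the marked form discussed after its statement, yields a descent of $(Y,f;\calR)$ to $F$. I expect the main obstacle to be precisely this last functoriality: one must check that \eqref{eq:Galact} is a genuine left action and that it is natural with respect to conjugating an isomorphism by $\sigma$, which requires tracking the canonical identifications $\lexp{\sigma}{\scrP_{X_{\et},P}} \simeq \scrP_{\lexp{\sigma}{X}_{\et},\lexp{\sigma}{P}}$ carefully. Once this naturality is secured, the cocycle identity is purely formal.
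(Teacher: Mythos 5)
Your proof is correct and takes essentially the same approach as the paper's: smoothness of $S$ via a local-ring argument, the torsor structure on $B(\pi,S)$ from Proposition \ref{prop:constandfaithful} combined with the (generically) Galois property of $\pi$, existence of $\phi_\sigma$ from transitivity, and the cocycle relation from the uniqueness (freeness) part of the torsor property. The only cosmetic difference is in part (a), where the paper completes and invokes the structure theorem for complete discrete valuation rings while you take invariants of the henselian local ring under the stabilizer; both routes are standard and your more detailed verification of the torsor property and of the functoriality of the Galois action on branches simply makes explicit what the paper leaves implicit.
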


\begin{proof}
  First, part (a).  The smoothness of the point $S$ follows from the fact that
  a quotient of the power series ring $\Fbar [[x]]$ by a finite group action is
  isomorphic to this same ring by the structure theorem for complete discrete
  valuation rings \cite[II, Th.\ 2]{SerreLF}. That the set $B(\pi,S)$ is a
  $G$-torsor then follows from Proposition \ref{prop:constandfaithful}, since
  the map $Y \to W$ is (generically) Galois. Note that the quotient morphism
  $\pi$ is separable and finite, hence proper.

  Now we prove (b). The existence of $\phi_{\sigma}$ follows because choosing
  just \emph{any} $\phi'_{\sigma} : \lexp{\sigma}{Y} \to Y$ maps
  $\lexp{\sigma}{b}$ to \emph{some} branch over $S$, which we can then modify
  to be $b$ by invoking the transitivity part of being a torsor. The
  $\phi_{\sigma}$ give a Weil cocycle because of the uniqueness part of being a
  torsor; both $\phi_{\sigma \tau}$ and $\phi_{\sigma}
  \lexp{\sigma}{\phi_{\tau}}$ send $\lexp{\sigma \tau}{b}$ to $b$ via pullback.
\end{proof}

\begin{rmk}\label{rmk:ratbranch}
  Theorem \ref{thm:BirchExp} in fact shows that there exists a descent
  $(Y_0;f_0;\calR_0)$ with a branch at the image $S_0$ of $S$ that is defined
  over $F$. Indeed, by a similar argument as that in \eqref{eq:PRat}, the
  coboundary corresponding to the Weil cocycle $\phi_{\sigma}$ maps $b$ to a
  branch defined over $F$ (in the sense that $b$ is sent to itself under the
  action \eqref{eq:Galact}).
\end{rmk}

The proof of Theorem \ref{thm:BirchExp} also shows how to obtain a finite
extension of $F$ over which a Weil cocycle can be constructed.

\begin{cor}\label{cor:splitting}
  Let $(Y,f;\calR)$ be a marked map with a smooth point $P$ as part of the
  rigidification data. Let $G$ be the automorphism group of $(Y,f;\calR)$. Then
  there exists a finite Galois extension $K \ext F$ over which a Weil cocycle
  for $(Y,f;\calR)$ can be constructed.
\end{cor}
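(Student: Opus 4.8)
The plan is to take the Weil cocycle furnished by Theorem \ref{thm:BirchExp}(b) and show that it factors through a finite quotient of $\Gamma = \Gal(\Fbar \ext F)$. Recall that this cocycle is pinned down by a single choice of branch $b \in B(\pi, S)$, where $\pi : Y \to G \backslash Y = W$ is the quotient map and $S = \pi(P)$ is the image of the smooth marked point; concretely, $\phi_\sigma : \lexp{\sigma}{Y} \xrightarrow{\sim} Y$ is the unique isomorphism carrying $\lexp{\sigma}{b}$ to $b$. It therefore suffices to produce a finite Galois extension $K \ext F$ with $\phi_\sigma = \mathrm{id}$ for every $\sigma \in \Gal(\Fbar \ext K)$, since the cocycle relation \eqref{eq:WeilCoc} then forces $\phi_\sigma$ to depend only on $\sigma|_K$.

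First I would descend the finite data. The curve $Y$, the map $f$, the rigidification $\calR$, the quotient $W$, the morphism $\pi$, and the point $S$ are all cut out by finitely many algebraic equations over $\Fbar$; since $\Fbar$ is a separable algebraic extension of $F$, all of these descend to some finite subextension $K_0 \ext F$. Over $K_0$ the set $B(\pi, S)$ is a finite $G$-torsor of cardinality $d = |G|$ by Proposition \ref{prop:constandfaithful}, and $\Gal(\Fbar \ext K_0)$ acts on it through the maps \eqref{eq:Galact}, which now land back in $B(\pi, S)$ because $\lexp{\sigma}{\pi} = \pi$ and $\lexp{\sigma}{S} = S$.

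The key point is that this action on the finite set $B(\pi, S)$ is continuous. This rests on the algebraicity built into Proposition \ref{prop:constandfaithful}: each branch is identified there with a point of $Y$ lying over the generic point of $S$, defined over a finite extension of the function field, and these finitely many points are permuted by $\Gamma$ through their coordinates. Granting continuity, the stabilizer of $b$ is an open subgroup of $\Gal(\Fbar \ext K_0)$, hence equals $\Gal(\Fbar \ext K_1)$ for some finite extension $K_1 \ext K_0$. For $\sigma \in \Gal(\Fbar \ext K_1)$ we then have $\lexp{\sigma}{Y} = Y$ and $\lexp{\sigma}{b} = b$, so by the uniqueness half of the torsor property in Theorem \ref{thm:BirchExp}(a) the isomorphism $\phi_\sigma$ fixing $b$ must be the identity.

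Finally I would take $K$ to be the Galois closure of $K_1$ over $F$, which is again finite. For $\rho \in \Gal(\Fbar \ext K)$ we have $\phi_\rho = \mathrm{id}$, and since $\Gal(\Fbar \ext K)$ is normal in $\Gamma$, the relation $\phi_{\sigma \rho} = \phi_\sigma \lexp{\sigma}{\phi_\rho} = \phi_\sigma$ shows that $\sigma \mapsto \phi_\sigma$ is constant on cosets and descends to a Weil cocycle indexed by $\Gal(K \ext F)$ for the model of $(Y, f; \calR)$ over $K$. The only real obstacle is establishing the continuity of the Galois action on branches used above---equivalently, that the branch $b$ is defined over a finite extension of $F$---which I expect to follow cleanly from the finiteness and algebraicity already recorded in Proposition \ref{prop:constandfaithful}.
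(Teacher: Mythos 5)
Your proposal is correct and is essentially the paper's own argument: both hinge on the fact that the finitely many branches in $B(\pi,S)$ are defined over a finite extension of $F$ (which the paper asserts directly in its proof, and which you phrase as continuity of the Galois action on branches), after which the uniqueness half of the torsor property pins down the cocycle. The only cosmetic difference is that the paper simply re-runs Theorem \ref{thm:BirchExp} over the finite Galois extension $K$ containing the fields of definition of $(Y,f;\calR)$, the elements of $G$, and the branches, whereas you build the cocycle over $\Fbar$ and then factor it through $\Gal(K \ext F)$ via the stabilizer/Galois-closure argument.
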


\begin{proof}
  Let $K \ext F$ be a finite Galois extension over which all of $(Y,f;\calR)$,
  the elements of the group $G$, and the branches of the canonical morphism
  $\pi : Y \to G \backslash Y$ at $\pi (P)$ are defined.  Such an extension
  exists: defining expressions for $Y,f,\calR$ and the elements of the finite
  group $G$ have finitely many coefficients, and the finitely many branches are
  defined over a finite extension of $F$. Working over $K$ instead of $\Fbar$
  in Theorem \ref{thm:BirchExp} gives a descent relative to the extension $K
  \ext F$.
\end{proof}

\begin{rmk}
  As we will see explicitly later, computing the splitting field of the
  branches at a given point requires no more than determining the leading term
  of certain (generalized) power series whose order is known explicitly.
\end{rmk}

\begin{rmk}
  The counterexamples in Section \ref{sec:counter} all share the property that
  the number of branches over the points of the base curve is not constant,
  because of the merging of these branches over the singular points, which are
  (crucially) the only rational points of the base curve. This makes it
  impossible to read off a uniquely determined cocycle as in Theorem
  \ref{thm:BirchExp}.
\end{rmk}

\begin{rmk}
  As in Birch's original article \cite{Birch} we can consider the case where
  $X$ is a modular curve associated with a subgroup $\SL_2 (\Z)$, which we
  suppose to be defined over some number field $F$. Using a cusp of $X$ then
  allows one to use $q$-expansions with respect to a uniformizer $q$ of the
  appropriate width with respect to the cusp.

  Echelonizing a basis of modular forms gives a defining equation for $X$ over
  $F$; this does indeed turn out to lead to $F$-rational $q$-expansions, with
  the slight subtlety that one may need to twist $X$ to have a rational branch
  over $F$.
\end{rmk}

\subsection{Descent under infinite automorphism group}

To finish the proof of Theorem \ref{thm:BirchCurve}, it remains to deal with
the case where the automorphism group of the marked curve $(Y,f;\calR)$ is
infinite. In this case the map $f$ has to be the identity map, so we are
reduced to considering marked curves $(X;\calR) = (X;P_1, \ldots P_n)$ with
$n \geq 1$ for which one of the $P_i$, say $P_1$, is smooth.

\begin{lem}\label{lem:infg0}
  Let $(X;\calR) = (X;P_1, \ldots P_n)$ be a marked projective curve with $P_1$
  smooth and $\#\Aut(X;\calR)=\infty$. Let $\Sigma=X_{\textup{sing}}(\Fbar)$ be
  the set of singular points of $X$.  Then the following statements hold.
  \begin{enumalph}
    \item The geometric genus $g$ of $X$ is equal to $0$.
    \item We have $n + \#\Sigma \leq 2$, and the inverse image of $\calR \cup
      \Sigma$ with respect to the normalization $\widetilde{X} \to X$ has
      cardinality at most $2$.
  \end{enumalph}
\end{lem}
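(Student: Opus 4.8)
The plan is to transport the whole problem to the normalization $\nu : \widetilde{X} \to X$, using that automorphisms of $X$ lift canonically to it. Since $\widetilde{X}$ is the unique smooth projective model of the function field $F(X)$, every $\phi \in \Aut(X;\calR)$ induces a unique $\widetilde{\phi} \in \Aut(\widetilde{X})$ with $\nu \circ \widetilde{\phi} = \phi \circ \nu$, and the assignment $\phi \mapsto \widetilde{\phi}$ is injective; write $\widetilde{G}$ for its image, an infinite subgroup of $\Aut(\widetilde{X})$. Because the singular locus is intrinsic, the relation $\nu \circ \widetilde{\phi} = \phi \circ \nu$ shows that $\widetilde{G}$ stabilizes $\nu^{-1}(\Sigma)$, and because each marked point is fixed by $\Aut(X;\calR)$, the same relation shows that $\widetilde{G}$ stabilizes each fiber $\nu^{-1}(P_i)$. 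In particular, since $P_1$ is smooth its unique preimage $\widetilde{P}_1$ is a fixed point of all of $\widetilde{G}$.

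For part (a) I would argue by contradiction on the geometric genus $g$ of $X$, which is the genus of $\widetilde{X}$. If $g \geq 2$, then $\Aut(\widetilde{X})$ is finite in every characteristic, so $\widetilde{G}$ is finite, contradicting $\#\widetilde{G} = \#\Aut(X;\calR) = \infty$. If $g = 1$, then the subgroup of $\Aut(\widetilde{X})$ fixing $\widetilde{P}_1$ is exactly the automorphism group of the elliptic curve $(\widetilde{X},\widetilde{P}_1)$, which is again finite; as $\widetilde{G}$ fixes $\widetilde{P}_1$ it lies in this finite group, once more a contradiction. Hence $g = 0$.

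For part (b) we now have $\widetilde{X} \cong \PP^1$ and $\widetilde{G} \leq \PGL_2(\Fbar)$ infinite. Put $S = \nu^{-1}(\calR \cup \Sigma)$, a finite $\widetilde{G}$-stable subset of $\PP^1(\Fbar)$. The permutation action $\widetilde{G} \to \Sym(S)$ has finite image, so its kernel $K$, the pointwise stabilizer of $S$, has finite index and is therefore infinite; pick a nonidentity $\gamma \in K$. A nonidentity element of $\PGL_2(\Fbar)$ has at most two fixed points on $\PP^1$, and $S \subseteq \operatorname{Fix}(\gamma)$, whence $\#S \leq 2$. This is precisely the second assertion, that $\nu^{-1}(\calR \cup \Sigma)$ has cardinality at most $2$. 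The bound $n + \#\Sigma \leq 2$ then follows because $\nu$ restricts to a surjection $S \twoheadrightarrow \calR \cup \Sigma$, so $\#(\calR \cup \Sigma) \leq \#S \leq 2$, together with the fact that the marked points are disjoint from the singular locus, which gives $n + \#\Sigma = \#(\calR \cup \Sigma)$.

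The deep inputs here are light; the main obstacle is bookkeeping around the normalization. One must verify carefully, via functoriality $\nu \circ \widetilde{\phi} = \phi \circ \nu$, that the lift preserves the preimages of $\Sigma$ and of each $P_i$, and one must cite the correct characteristic-free finiteness facts in part (a): finiteness of $\Aut(\widetilde{X})$ for $g \geq 2$ and of the point-stabilizer for $g = 1$. The one genuinely delicate point is the passage from the upstairs bound $\#S \leq 2$ to the downstairs bound $n + \#\Sigma \leq 2$: this equality of counts relies on the marked points being disjoint from $\Sigma$ (so that no point of $S$ is charged twice), and if a marked point were allowed to coincide with a singular point the additive form of the bound would require amending to $\#(\calR \cup \Sigma) \leq 2$.
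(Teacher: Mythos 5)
Your proof of part (a) is exactly the paper's argument (lift to the normalization, finiteness of $\Aut(\widetilde{X})$ for $g \geq 2$, finiteness of $\Aut(\widetilde{X};\widetilde{P}_1)$ for $g=1$), and your proof of the preimage bound in part (b) is a rephrasing of the paper's one-line observation that an automorphism of a genus-$0$ curve is determined by the images of three distinct points: your "kernel of $\widetilde{G} \to \Sym(S)$ is infinite, and a nontrivial element of $\PGL_2(\Fbar)$ fixes at most two points" is the same fact run in the contrapositive direction.

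The point you flag at the end, however, is not a mere bookkeeping worry — it is a genuine defect, and your proposed amendment is the correct resolution. The lemma only assumes $P_1$ smooth; the other marked points may be singular, and the paper itself applies the lemma in exactly that situation (in the proof of Theorem \ref{thm:infiniteaut}, the case $n = 2$ with $P_2$ singular and unibranch). So your step "the marked points are disjoint from the singular locus" is not available, and the additive bound $n + \#\Sigma \leq 2$ as literally stated can fail: take $X$ a cuspidal cubic, $P_1$ a smooth point and $P_2$ the cusp. Every automorphism of $\widetilde{X} = \PP^1$ fixing the preimage $\widetilde{P}_2$ of the cusp preserves the subring $\Fbar + \mathfrak{m}^2_{\widetilde{P}_2}$ of the local ring and hence descends to $X$, so $\Aut(X;P_1,P_2)$ contains a copy of $\G_m(\Fbar)$ and is infinite, while $n + \#\Sigma = 2 + 1 = 3$. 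What your argument actually proves — and what the paper's one-sentence proof of (b) proves, and what the application in Theorem \ref{thm:infiniteaut} actually uses — is the union form $\#(\calR \cup \Sigma) \leq 2$ together with the bound on its preimage under the normalization. So your proof is complete and correct for the statement in the form the paper needs; the flaw you identified lies in the lemma's phrasing, not in your argument.
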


\begin{proof}
   An automorphism of $X$ extends to a unique automorphism of the normalization
   $\widetilde{X}$, so $\Aut(X;\calR) \leq \Aut(X) \leq \Aut(\widetilde{X})$.
   If $g \geq 2$ then $\Aut(\widetilde{X})$ is finite, so immediately we have
   $g=0,1$.

  Since $P_1$ is smooth, it lifts to a unique point $\widetilde{P}_1$ on
  $\widetilde{X}$, and any automorphism of $X$ fixing $P_1$ lifts to a unique
  automorphism of $\widetilde{X}$ fixing $\widetilde{P}_1$.  If $g=1$, then
  $(\widetilde{X},\widetilde{P}_1)$ has the structure of an elliptic curve so
  $\Aut(\widetilde{X},\widetilde{P}_1)$ is finite, and so $\Aut(X;\calR) \leq
  \Aut(X;P_1) \leq \Aut(\widetilde{X},\widetilde{P}_1)$.  So $g=0$, proving
  (a).

  Statement (b) then follows, since an automorphism of a curve of genus $0$ is
  determined by the image of three distinct points on this curve.
\end{proof}

\begin{thm} \label{thm:infiniteaut}
  Let $(X;\calR)$ be a marked curve over $\Fbar$ such that $\calR$ contains at
  least one smooth point on $X$.  Then $(X;\calR)$ descends.
\end{thm}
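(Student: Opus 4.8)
The plan is to dichotomize on the cardinality of $G = \Aut(X;\calR)(\Fbar)$. When $G$ is finite, I would simply invoke Theorem~\ref{thm:BirchExp}(b) with $f$ taken to be the identity map $X \to X$: the rigidification $\calR$ contains a smooth point by hypothesis, so that result directly produces a Weil cocycle and hence the descent. The entire difficulty is therefore concentrated in the case $\#G = \infty$, and here I would lean on the structural Lemma~\ref{lem:infg0}, which tells us that the geometric genus of $X$ equals $0$ and that the special points are extremely sparse.

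Next I would exploit that over the separably closed field $\Fbar$ the normalization is $\widetilde{X} \cong \PP^1$, and that by Lemma~\ref{lem:infg0}(b) the preimage of $\calR \cup \Sigma$ (with $\Sigma = X_{\textup{sing}}(\Fbar)$) consists of at most two points, the smooth marked point $P_1$ contributing its unique lift $\widetilde{P}_1$. A short case analysis then organizes the situation: either $X$ is smooth, carrying one or two marked points that can be normalized to the standard configurations $(\PP^1;\infty)$ and $(\PP^1;\infty,0)$; or $X$ has a single singular point $Q$, which the cardinality bound forces to be unibranch with unique preimage $\widetilde{Q}$, and after a coordinate change one may arrange $\widetilde{P}_1 = \infty$ and $\widetilde{Q} = 0$.

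The hard part will be the singular case, where one must descend not merely the normalization and its marked points but the pinching data encoded by $\scrO_{X,Q} \subset \scrO_{\widetilde{X},\widetilde{Q}}$. My key observation would be that the hypothesis $\#G = \infty$ forces infinitely many of the scalings $z \mapsto az$ fixing $0$ and $\infty$ to preserve the complete local ring $\widehat{\scrO}_{X,Q} \subset \Fbar[[z]]$. Writing a general element as $\sum_i c_i z^i$ and using that $\widehat{\scrO}_{X,Q}$ is a complete $\Fbar$-subalgebra, a Vandermonde argument over the infinitely many admissible scalars $a$ then shows that each monomial $c_i z^i$ already belongs to $\widehat{\scrO}_{X,Q}$. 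Consequently $\widehat{\scrO}_{X,Q}$ is the monomial subring determined by the numerical semigroup $S = \{\, i \in \Z_{\geq 0} : z^i \in \widehat{\scrO}_{X,Q} \,\}$, a purely combinatorial invariant that is insensitive to the field.

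To conclude, I would write down the explicit model: take $\PP^1$ with coordinate $z$ and marked point $\infty$; in the two-marked-point case also mark $0$; and in the singular case pinch the point $z = 0$ into the monomial singularity of type $S$. This marked curve $X_0$ is defined over the prime field, and by construction $(X_0)_{\Fbar}$ is isomorphic to $(X;\calR)$. Since the same recipe applied to any Galois conjugate returns the identical $X_0$---the semigroup $S$ depending only on $X$ and not on its field of definition---every conjugate of $(X;\calR)$ is isomorphic to it, so the field of moduli equals the prime field and $X_0$ is a model over it. This yields the descent, completing Theorem~\ref{thm:infiniteaut} and, together with Theorem~\ref{thm:BirchExp}, also Theorem~\ref{thm:BirchCurve}.
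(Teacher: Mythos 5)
Your proposal is correct, and in the essential case it takes a genuinely different route from the paper. Both arguments dispose of finite $G$ by invoking Theorem \ref{thm:BirchExp}, and both reduce the infinite case via Lemma \ref{lem:infg0} to geometric genus $0$ with at most two special points; the divergence is in how the pinching data at the unibranch singular point (your $Q$, the paper's $P_2$) is descended. The paper never analyzes the singularity itself: it lifts isomorphisms $\phi_\sigma : \lexp{\sigma}{X} \xrightarrow{\sim} X$ to the normalization, corrects the lifts into a Weil cocycle for $(\widetilde{X};\widetilde{P}_1,\widetilde{P}_2)$ (which descends, being smooth of genus $0$), and then uses the identification $\Aut(X;\calR) \simeq \Aut(\widetilde{X};\widetilde{P}_1,\widetilde{P}_2) \simeq \G_m$ --- valid precisely because the automorphism group is infinite --- to transport the correcting automorphisms back down, obtaining a Weil cocycle for $(X;P_1,P_2)$ and concluding by Theorem \ref{thm:Weil}. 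You instead use the infinitude of the admissible scalings to classify the singularity outright: the Vandermonde argument shows that $\widehat{\scrO}_{X,Q} \subset \Fbar[[z]]$ is the completed semigroup algebra of a numerical semigroup $S$, and you then exhibit an explicit model over the prime field by pinching $(\PP^1;\infty)$ at $z=0$ according to $S$. Your route is more constructive and yields a stronger conclusion (any such marked curve is automatically defined over the prime field, its isomorphism class pinned down by $S$ and the configuration type), whereas the paper's route avoids all structure theory of the singularity and stays inside the Weil-cocycle formalism used throughout. Note that your argument leans on the standard reconstruction of a reduced curve from its normalization together with the local rings at singular points (as in Serre \cite{SerreAlg}), both to identify automorphisms of $X$ with scalings preserving $\widehat{\scrO}_{X,Q}$ and to pass from the complete local ring back to $X$ itself via the conductor; this should be said explicitly.

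Three points to tighten. First, your argument tacitly assumes $X$ projective (normalization $\PP^1$); the theorem allows arbitrary curves, and you need the paper's final one-line reduction --- pass to the completion, whose boundary points join the count of special points --- to cover that case. Second, in the Vandermonde step you must choose a scalar $a$ with $a^0,\dots,a^{c-1}$ pairwise distinct, where $c$ is the conductor exponent; in positive characteristic the admissible scalars could all be roots of unity, but infinitude of the set together with finiteness of the roots of unity of order less than $c$ saves you, and the containment $z^c\Fbar[[z]] \subseteq \widehat{\scrO}_{X,Q}$ is what upgrades membership modulo high powers of $z$ to exact membership of each monomial. Third, a definitional nicety: in the paper's convention the field of moduli with respect to $\Fbar \mid F$ is then $F$ itself (the full Galois group fixes the isomorphism class once a prime-field model exists), and the descent is the base change of your model to $F$; saying the field of moduli ``equals the prime field'' is not quite consistent with that convention, though the mathematics is unaffected.
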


\begin{proof}
  Let $G=\Aut(X;\calR)$.  If $\#G<\infty$, we can apply Theorem
  \ref{thm:BirchExp}. If $\#G=\infty$, we apply Lemma \ref{lem:infg0} and have
  two cases.

  First suppose $X$ is projective and that $(X;\calR) = (X;P_1)$ for a smooth
  point $P_1$. By Lemma \ref{lem:infg0}(b), $X$ has at most one singular point.
  If $X$ is smooth, we can then take $(\PP^1; \infty)$ as a descent (cf.\ Lemma
  \ref{lem:g0caseXP}). This leaves the case where $X$ has a unique singular
  point $P_2$, and again by Lemma \ref{lem:infg0}(b), $P_2$ has a unique
  inverse image $\widetilde{P}_2$ on $\widetilde{X}$. The automorphism group
  $\Aut (X;\calR)$ is contained in the group of automorphisms $\Aut
  (\widetilde{X}; \widetilde{P}_1, \widetilde{P}_2) \simeq \G_m$ since $P_2$ is
  the unique singular point on $X$. Moreover, this inclusion is functorial.
  Therefore $\Aut (X;\calR)$ is isomorphic to a subvariety of $\G_m$, and since
  we assumed that it was infinite, it coincides with $\Aut (\widetilde{X};
  \widetilde{P}_1, \widetilde{P}_2)$.  Now $(\widetilde{X}; \widetilde{P}_1,
  \widetilde{P}_2)$ descends because $\widetilde{X}$ is smooth. Let $\left\{
  \phi_{\sigma} \right\}_{\sigma \in \Gamma}$ be a set of isomorphisms of $(X ;
  P_1, P_2)$ with its conjugates. By the property of the normalization, these
  lift to yield a set of isomorphisms $\left\{ \widetilde{\phi}_{\sigma}
  \right\}_{\sigma \in \Gamma}$ of $(\widetilde{X} ; \widetilde{P}_1,
  \widetilde{P}_2)$. Because $(\widetilde{X}; \widetilde{P}_1,
  \widetilde{P}_2)$ descends we can modify this latter set by automorphisms
  $\left\{ \widetilde{\alpha}_{\sigma} \right\}_{\sigma \in \Gamma}$, to obtain
  a Weil cocycle $\left\{ \widetilde{\phi}_{\sigma} \widetilde{\alpha}_{\sigma}
  \right\}_{\sigma \in \Gamma}$ for $(\widetilde{X}; \widetilde{P}_1,
  \widetilde{P}_2)$. Our statement on automorphism groups above means that the
  set of automorphisms $\left\{ \widetilde{\alpha}_{\sigma} \right\}_{\sigma
  \in \Gamma}$ yields a set of automorphisms $\left\{ \alpha_{\sigma}
  \right\}_{\sigma \in \Gamma}$, and by construction $\left\{ \phi_{\sigma}
  \alpha_{\sigma} \right\}_{\sigma \in \Gamma}$ is then a Weil cocycle for
  $(X;P_1,P_2)$.

  Now suppose that $X$ is projective with two marked points $(X;\calR) =
  (X;P_1,P_2)$. Again we are done if $X$ is smooth, since then a descent is
  given by $(\PP^1;\infty,0)$.  In the case where we admit singular points, we
  are reduced to the same case as that in the previous paragraph: $P_2$ is
  singular and has a unique inverse image $\widetilde{P}_2$ on $\widetilde{X}$.

  The case where $X$ is not projective follows by taking its smooth completion
  and applying the results above.
\end{proof}

\begin{rmk}
  It is indeed possible for the automorphism group $\Aut(X;P_1,P_2)$ to be
  finite and non-trivial when $X$ is singular and of (geometric) genus $0$.
  Take $\PP^1$ with affine coordinate $t$, pinch together the points $t = -1,1$
  to create a node, and let $X$ be the resulting singular curve.  Then
  $\Aut(X;0,\infty)$ is the subgroup of $\Aut(\PP^1;0,\infty)$ that preserves
  $\{-1,1\}$, and is therefore the subgroup $t \mapsto \pm t$. Examples where
  $P_1$ and $P_2$ are singular on $X$ can be constructed by constructing
  morphisms from suitable sublinear systems on $\PP^1$.
\end{rmk}

\subsection{Descent and the canonical model}

To conclude this section, we phrase our results in terms of a notion from
D\`ebes--Emsalem \cite{DebesEmsalem}. Let $(Y,f;\calR)$ be a marked map over
$\Fbar$ with field of moduli $F$ and \emph{finite} automorphism group $G = \Aut
(Y,f;\calR)$, and let $W = G \backslash Y$. Then by construction any choice of
isomorphisms \eqref{eq:isoms} gives rise to a Weil cocycle on $W$.

\begin{defn}\label{def:canmod}
  The \defi{canonical model} $W_0$ is the model of $W$ defined over $F$
  determined by the cocycle on $W$ induced by any choice of isomorphisms
  \eqref{eq:isoms}.
\end{defn}

The canonical model depends only on the isomorphism class of $(Y,f;\calR)$ over
$\Fbar$. In the canonical model, we denote the quotient morphism $\pi:Y \to G
\backslash Y$ and let $\psi_0 : W \xrightarrow{\sim} W_0$ be a coboundary
corresponding to the uniquely determined collection of isomorphisms $\{
\psi_{\sigma} \}_{\sigma \in \Gamma}$ induced by \eqref{eq:isoms}. Set $\pi_0
= \psi_0 \pi$.

\begin{prop}\label{prop:fomvsfod}
  Let $Q \in Y(\Fbar)$ be smooth. Then the field of moduli of $(Y,f;\calR;Q)$
  is the field of definition of the point $\pi_0 (Q)$ on $W_0$.
\end{prop}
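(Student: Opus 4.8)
The plan is to realize both quantities as fixed fields of explicit subgroups of $\Gamma = \Gal(\Fbar \ext F)$ and then to show that these subgroups coincide. Since $(Y,f;\calR)$ has field of moduli $F$, the field of moduli of the further rigidified object $(Y,f;\calR;Q)$ is the fixed field of
\begin{equation*}
  H = \{ \sigma \in \Gamma : (\lexp{\sigma}{Y},\lexp{\sigma}{f};\lexp{\sigma}{\calR};\lexp{\sigma}{Q}) \simeq (Y,f;\calR;Q) \},
\end{equation*}
while, because $W_0$ is defined over $F$, the geometric point $\pi_0(Q) \in W_0(\Fbar)$ has field of definition equal to the fixed field of its stabilizer $\mathrm{Stab}_\Gamma(\pi_0(Q)) = \{ \sigma \in \Gamma : \lexp{\sigma}{\pi_0(Q)} = \pi_0(Q) \}$. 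By Galois theory it therefore suffices to establish the equality $H = \mathrm{Stab}_\Gamma(\pi_0(Q))$ of subgroups of $\Gamma$.

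First I would describe $H$ concretely. Fix the isomorphisms $\phi_{\sigma} : (\lexp{\sigma}{Y},\lexp{\sigma}{f};\lexp{\sigma}{\calR}) \xrightarrow{\sim} (Y,f;\calR)$ used to build the canonical model. Any isomorphism of marked maps with this source and target differs from $\phi_{\sigma}$ by postcomposition with an element of $G = \Aut(Y,f;\calR)$, so $\sigma \in H$ precisely when $g\,\phi_{\sigma}(\lexp{\sigma}{Q}) = Q$ for some $g \in G$; that is, when $\phi_{\sigma}(\lexp{\sigma}{Q})$ and $Q$ lie in the same $G$-orbit, equivalently when $\pi(\phi_{\sigma}(\lexp{\sigma}{Q})) = \pi(Q)$. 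In particular this condition is independent of the choice of $\phi_{\sigma}$, as it must be.

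Next I would compute the Galois action on $\pi_0(Q)$. The isomorphism $\phi_{\sigma}$ conjugates $\lexp{\sigma}{G}$ to $G$ and hence descends through the quotient maps to the induced cocycle $\psi_{\sigma} : \lexp{\sigma}{W} \xrightarrow{\sim} W$ defining $W_0$, yielding the compatibility $\psi_{\sigma} \lexp{\sigma}{\pi} = \pi \phi_{\sigma}$. Using $\pi_0 = \psi_0 \pi$, the coboundary relation $\lexp{\sigma}{\psi_0} = \psi_0 \psi_{\sigma}$, and the fact that $\lexp{\sigma}{W_0} = W_0$, I would obtain
\begin{equation*}
  \lexp{\sigma}{\pi_0(Q)} = \lexp{\sigma}{\psi_0}(\lexp{\sigma}{\pi}(\lexp{\sigma}{Q})) = \psi_0 \psi_{\sigma}(\lexp{\sigma}{\pi}(\lexp{\sigma}{Q})) = \psi_0 \pi(\phi_{\sigma}(\lexp{\sigma}{Q})) = \pi_0(\phi_{\sigma}(\lexp{\sigma}{Q})).
\end{equation*}
Since $\psi_0$ is an isomorphism, $\lexp{\sigma}{\pi_0(Q)} = \pi_0(Q)$ holds if and only if $\pi(\phi_{\sigma}(\lexp{\sigma}{Q})) = \pi(Q)$, which is exactly the description of $H$ obtained above. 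Hence $H = \mathrm{Stab}_\Gamma(\pi_0(Q))$, and the two fields agree.

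The main obstacle I anticipate is bookkeeping rather than conceptual: one must track the Galois twist carefully through the $\Fbar$-morphism $\psi_0$ (which is emphatically \emph{not} defined over $F$) and verify the naturality $\psi_{\sigma} \lexp{\sigma}{\pi} = \pi \phi_{\sigma}$, i.e.\ that the chosen $\phi_{\sigma}$ really induce the very coboundary cocycle on $W$ that defines $\pi_0$. Once this compatibility of the quotient with conjugation is in place, the displayed computation collapses both stabilizer conditions onto the single orbit condition $\pi(\phi_{\sigma}(\lexp{\sigma}{Q})) = \pi(Q)$, and the equality of the field of moduli with the field of definition follows formally.
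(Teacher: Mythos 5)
Your proof is correct, and it takes a route that is partly but not entirely the one in the paper. The computation at the heart of your argument --- the compatibility $\psi_{\sigma}\lexp{\sigma}{\pi} = \pi\phi_{\sigma}$, the coboundary relation $\lexp{\sigma}{\psi_0} = \psi_0\psi_{\sigma}$, the resulting identity $\lexp{\sigma}{\pi_0(Q)} = \pi_0(\phi_{\sigma}(\lexp{\sigma}{Q}))$, and the appeal to transitivity of $G = \Aut(Y,f;\calR)$ on the fibers of $\pi$ --- is exactly the paper's argument for the inclusion $M \subseteq F(\pi_0(Q))$, where $M$ is the field of moduli of $(Y,f;\calR;Q)$. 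The difference is in the reverse inclusion $F(\pi_0(Q)) \subseteq M$: the paper proves it by invoking Theorem \ref{thm:BirchCurve} to replace $(Y,f;\calR;Q)$ by a model defined over $M$, so that $Q \in Y(M)$ and hence $\pi_0(Q) \in W_0(M)$, whereas you get it for free by reading your chain of equivalences in the other direction, since you have identified both fields as the fixed fields of one and the same subgroup $H = \mathrm{Stab}_\Gamma(\pi_0(Q))$ of $\Gamma$. What your approach buys is logical economy: the proposition becomes a purely formal statement about the canonical cocycle and orbits, independent of the descent theorem, and in fact your argument never uses smoothness of $Q$ (the quotient map's geometric fibers are $G$-orbits regardless), so it proves a slightly more general statement. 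What the paper's approach buys is brevity at that step, since Theorem \ref{thm:BirchCurve} is already established at that point in the text and neither route creates a circularity (Theorem \ref{thm:BirchCurve} is proved via Theorems \ref{thm:BirchExp} and \ref{thm:infiniteaut}, not via this proposition). One small point you flagged yourself and should make explicit in a final write-up: the naturality $\psi_{\sigma}\lexp{\sigma}{\pi} = \pi\phi_{\sigma}$ and the independence of $\psi_{\sigma}$ from the choice of $\phi_{\sigma}$ hold because any two choices of $\phi_{\sigma}$ differ by an element of $G$, which acts trivially on $W = G \backslash Y$; this is precisely what the paper means by the ``uniquely determined'' cocycle in Definition \ref{def:canmod}.
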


\begin{proof}
  Let $F'=F(\pi_0(Q))$ be the field of definition of $\pi_0(Q)$ on $W_0$.

  First, we claim that $F'$ is contained in the field of moduli $M$ of
  $(Y,f;\calR;Q)$.  By Theorem \ref{thm:BirchCurve}, we may assume without loss
  of generality that $(Y,f;\calR;Q)=(Y_0;f_0;\calR_0;Q_0)$ is defined over $M$.
  Thus $Q \in Y(M)$.  This implies $\pi_0 (Q) \in W_0(M)$: we have $W_0 = G
  \backslash Y$ where $G=\Aut(Y,f;\calR)$ so that $\pi_0$ coincides with the
  natural projection $Y \to W = G \backslash Y$ defined over $M$. Thus $M
  \supseteq F'=F(\pi_0(Q))$.

  To conclude, we show $M \subseteq F'$. We then have to show that for all
  $\sigma \in \Gamma' = \Gal (\Fbar \ext F')$, there exist isomorphisms
  \begin{equation}\label{eq:isostronger}
    \phi_{\sigma} :
    (\lexp{\sigma}{Y};\lexp{\sigma}{f};\lexp{\sigma}{\calR};\lexp{\sigma}{Q})
    \xrightarrow{\sim} (Y,f;\calR;Q)
  \end{equation}
  so that $M$ is contained in $F'$. We are given that $(Y,f;\calR)$ has field
  of moduli $F$, so we have isomorphisms $\phi_{\sigma} : (\lexp{\sigma}{Y} ;
  \lexp{\sigma}{f} ; \lexp{\sigma}{\calR}) \xrightarrow{\sim} (Y,f;\calR)$ for
  all $\sigma \in \Gal(\Fbar \ext F)$.

  Let $\sigma \in \Gamma'$.  We will show that $\phi_{\sigma}$ can be chosen in
  such a way that additionally $\phi_{\sigma} (\lexp{\sigma}{Q}) = Q$.  Let
  $\psi_{\sigma} = \psi_0^{-1} \lexp{\sigma}{\psi_0}$. Then by construction of
  the canonical Weil cocycle for $W$ we know that $\pi \phi_{\sigma} =
  \psi_{\sigma} \lexp{\sigma}{\pi}$.  We obtain the following commutative
  diagram:
  \begin{equation} \label{eq:big_comm_diag}
    \xymatrix{
    \sigma(Y) \ar@/_4.5pc/[ddr]_{\sigma(\pi_0)} \ar[rr]^{\phi_\sigma} \ar[d]^{\sigma(\pi)} & & Y \ar[d]_{\pi} \ar@/^4.0pc/[ddl]^{\pi_0} \\
    \sigma(W) \ar[rr]^{\psi_\sigma} \ar[dr]_{\sigma(\psi_0)} & & W \ar[dl]^{\psi_0} \\
    & W_0
    }
  \end{equation}
  The rationality of $\pi_0(Q)$ over $F'$ implies that
  \begin{equation}\label{eq:Q0}
      \pi_0 (Q)
    = \lexp{\sigma}{\pi_0(Q)}
    = \lexp{\sigma}{\pi_0} (\lexp{\sigma}{Q}).
  \end{equation}
  We claim that $Q,\phi_{\sigma}(\sigma(Q)) \in Y(\Fbar)$ are in the same fiber
  of the map $\pi_0$.  Indeed, from \eqref{eq:Q0} and tracing through the
  diagram \eqref{eq:big_comm_diag}, we obtain
  \begin{equation}
      \pi_0(Q)
    = \lexp{\sigma}{\pi_0} (\lexp{\sigma}{Q})
    = \pi_0(\phi_{\sigma} (\lexp{\sigma}{Q}))  .
  \end{equation}
  Because $\Aut (Y,f;\calR)$ acts transitively on the fibers of $\pi$ and thus
  on those of $\pi_0$, we see that we can compose the chosen $\phi_{\sigma}$
  with an element of this group to obtain an isomorphism as in
  \eqref{eq:isostronger}.
\end{proof}

\begin{thm}\label{thm:twists}
  Let $Q \in Y(K)$ be smooth. Then $Q$ is $F$-rational on \emph{some} descent
  of $(Y,f;\calR)$ to $F$ if and only if $\pi_0 (Q)$ is $F$-rational.
\end{thm}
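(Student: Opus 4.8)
The plan is to deduce the statement from Proposition \ref{prop:fomvsfod}, which already identifies the field of moduli of the further-rigidified object $(Y,f;\calR;Q)$ with the field of definition $F(\pi_0(Q))$ of $\pi_0(Q)$ on the canonical model $W_0$. The crucial preliminary step is a \emph{dictionary}: to give a descent $(Y_0,f_0;\calR_0)$ of $(Y,f;\calR)$ to $F$, together with an isomorphism $h : (Y,f;\calR)_{\Fbar} \xrightarrow{\sim} (Y_0,f_0;\calR_0)_{\Fbar}$ under which $Q$ is $F$-rational (that is, $h(Q) \in Y_0(F)$), is the same as to give a descent $(Y_0,f_0;\calR_0;h(Q))$ of $(Y,f;\calR;Q)$ to $F$. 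First I would record this equivalence carefully, observing that the condition $h(Q) \in Y_0(F)$ is exactly what makes the enlarged tuple an object over $F$ and that $h$ is then an isomorphism of the rigidified objects over $\Fbar$. In short, ``$Q$ is $F$-rational on some descent of $(Y,f;\calR)$'' is equivalent to ``$(Y,f;\calR;Q)$ descends to $F$.''

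Granting this equivalence, the forward implication is immediate. If $(Y,f;\calR;Q)$ descends to $F$, then $F$ is a field of definition of $(Y,f;\calR;Q)$, so its field of moduli is contained in $F$. By Proposition \ref{prop:fomvsfod} this field of moduli equals $F(\pi_0(Q))$, which contains $F$ since $\pi_0(Q)$ is a point of $W_0$ and $W_0$ is defined over $F$. Hence $F(\pi_0(Q)) = F$, i.e.\ $\pi_0(Q)$ is $F$-rational.

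For the converse, suppose $\pi_0(Q)$ is $F$-rational, so that $F(\pi_0(Q)) = F$. By Proposition \ref{prop:fomvsfod} the field of moduli of $(Y,f;\calR;Q)$ is then $F$. Since $Q$ is a smooth point of $Y$, the tuple $(Y,f;\calR;Q)$ is a marked map over the separably closed field $\Fbar$ with at least one smooth marked point on the source, so Theorem \ref{thm:BirchCurve} applies and $(Y,f;\calR;Q)$ descends to its field of moduli $F$. Reading this back through the dictionary, a descent $(Y_0,f_0;\calR_0;Q_0)$ exhibits $(Y_0,f_0;\calR_0)$ as a descent of $(Y,f;\calR)$ on which $Q$, namely $Q_0 \in Y_0(F)$, is $F$-rational, as desired.

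The genuine geometric content has already been carried out in Proposition \ref{prop:fomvsfod}, so once that identification is in place the theorem is a formal consequence of it together with Theorem \ref{thm:BirchCurve}. The only point that requires care, and which I expect to be the main (if minor) obstacle, is the dictionary of the first paragraph: one must verify that $h$ respects all of the rigidification data and that the $F$-rationality of $h(Q)$ is precisely the condition needed to regard the datum as a descent of the rigidified tuple $(Y,f;\calR;Q)$, rather than any new input of a geometric nature.
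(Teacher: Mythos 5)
Your proof is correct and follows essentially the same route the paper takes: the paper states Theorem \ref{thm:twists} without a separate proof precisely because, as you observe, it is a formal consequence of Proposition \ref{prop:fomvsfod} together with Theorem \ref{thm:BirchCurve}, via the tautological dictionary between descents of $(Y,f;\calR)$ on which $Q$ becomes $F$-rational and descents of $(Y,f;\calR;Q)$. Indeed, your converse direction is word-for-word the argument the paper gives for the subsequent Theorem \ref{thm:DebEms}, and your forward direction is the standard inclusion of the field of moduli in any field of definition.
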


As a corollary, we obtain the following theorem, alluded to by D\`ebes--Emsalem
\cite[\S 5]{DebesEmsalem}.

\begin{thm}\label{thm:DebEms}
  Suppose that $W_0$ admits a smooth $F$-rational point. Then $(Y,f;\calR)$
  descends.
\end{thm}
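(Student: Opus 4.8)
The plan is to deduce Theorem \ref{thm:DebEms} directly from Theorem \ref{thm:twists}, which is the key structural result just established. The hypothesis gives us a smooth $F$-rational point $S_0 \in W_0(F)$. My strategy is to pull this point back through the quotient map to produce a suitable smooth point $Q$ on $Y$ to which I can apply the previous theorem.

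First I would choose a smooth geometric point $Q \in Y(\Fbar)$ lying in the fiber $\pi_0^{-1}(S_0)$. Such a smooth $Q$ exists: the map $\pi_0 = \psi_0 \pi$ is finite and separable, and since smoothness of the total space $Y$ is generic (the singular locus is a proper closed subset, hence finite, as $Y$ is a curve), all but finitely many points of $Y$ are smooth; in particular the fiber over $S_0$, being the image of a smooth locus under the quotient by the finite group $G = \Aut(Y,f;\calR)$, contains a smooth point by part (a) of Theorem \ref{thm:BirchExp}, where it is shown that quotient maps send smooth points to smooth points and that the branches form a $G$-torsor. With this choice, $\pi_0(Q) = S_0$ is $F$-rational by hypothesis.

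Next I would invoke Theorem \ref{thm:twists} with this $Q$: since $\pi_0(Q) = S_0$ is $F$-rational, the theorem guarantees that $Q$ is $F$-rational on \emph{some} descent of $(Y,f;\calR)$ to $F$. In particular, such a descent exists, which is exactly the assertion that $(Y,f;\calR)$ descends to $F$. This completes the argument, since the existence of any model over $F$ is what ``descends'' means.

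The step I expect to require the most care is confirming that the fiber $\pi_0^{-1}(S_0)$ genuinely contains a smooth point of $Y$, rather than consisting entirely of singular points. For a marked \emph{curve} $(X;\calR)$ the worry is real, because the very counterexamples of Theorem B exploit fibers concentrated over singular points. Here, however, the hypothesis that $W_0$ has a smooth rational point, combined with part (a) of Theorem \ref{thm:BirchExp} (which asserts that a smooth point downstairs lifts to a $G$-torsor of branches, forcing its preimage to meet the smooth locus of $Y$), should rule this out: a smooth point of $W_0$ cannot be the image only of singular points under a finite separable quotient by $G$. I would make this precise by noting that $\pi_0$ restricted to the smooth locus of $Y$ is surjective onto the smooth locus of $W_0$, so that any smooth $S_0$ admits a smooth preimage.
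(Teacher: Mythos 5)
Your core deduction is the same as the paper's: choose $Q \in \pi_0^{-1}(S_0)$, observe that $\pi_0(Q)$ is $F$-rational, and conclude descent via the field-of-moduli/field-of-definition correspondence (you route this through Theorem \ref{thm:twists}; the paper invokes Proposition \ref{prop:fomvsfod} together with Theorem \ref{thm:BirchCurve}, which is what Theorem \ref{thm:twists} packages). The problem lies precisely in the step you single out as delicate. Your claim that $\pi_0$ restricted to the smooth locus of $Y$ surjects onto the smooth locus of $W_0$ --- equivalently, that a smooth point of the quotient always admits a smooth preimage --- is false, and Theorem \ref{thm:BirchExp}(a) cannot be cited for it: that statement gives only the forward implication (smooth upstairs implies smooth downstairs), not its converse. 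Concretely, let $Y'$ be a hyperelliptic curve with involution $\iota$, let $R$ be a non-Weierstrass point, and let $Y$ be the curve obtained by pinching $R$ and $\iota(R)$ to a node; then $\iota$ descends to $Y$ and \emph{swaps} the two branches at the node, so the invariant local ring $\{(f,g) \in \Fbar[[t_1]] \times \Fbar[[t_2]] : f(0)=g(0)\}^{\iota} \simeq \Fbar[[t]]$ is regular: the image of the node is a \emph{smooth} point of $\langle \iota \rangle \backslash Y$ whose set-theoretic fiber consists of the node alone. (The simplest instance is the nodal cubic $y^2 = x^2(x+1)$ with $(x,y) \mapsto (x,-y)$: the quotient is $\PP^1$, and the fiber over the image of the node is just the node.) Since this paper explicitly allows singular $Y$ --- that is the whole point of its departure from D\`ebes--Emsalem, and of the counterexamples of Theorem B --- your existence argument for a smooth $Q$ genuinely breaks down.

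In fairness, the paper's own proof makes the same silent assumption: it picks $Q$ over $S_0$ and applies Proposition \ref{prop:fomvsfod}, which requires $Q$ smooth, without comment. The robust repair is to avoid marking a point upstairs altogether and to run the cocycle argument with branches over $S_0$ directly. Proposition \ref{prop:constandfaithful} requires smoothness only of the point in the \emph{target}, so $B(\pi_0, S_0)$ has cardinality $\deg \pi_0 = \#G$ and carries a faithful, hence simply transitive, action of $G = \Aut(Y,f;\calR)$ even when every point of $Y$ above $S_0$ is singular (the branches factor through the normalization of $Y$). Since $\pi_0 \phi_{\sigma} = \lexp{\sigma}{\pi_0}$ and $\lexp{\sigma}{S_0} = S_0$, each isomorphism $\phi_{\sigma}$ carries $B(\lexp{\sigma}{\pi_0},S_0)$ to $B(\pi_0,S_0)$; fixing $b \in B(\pi_0,S_0)$ and normalizing $\phi_{\sigma}$ by the condition $\phi_{\sigma}(\lexp{\sigma}{b}) = b$, exactly as in the proof of Theorem \ref{thm:BirchExp}(b), yields a Weil cocycle for $(Y,f;\calR)$ and hence the desired descent, with no smoothness hypothesis on the fiber.
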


\begin{proof}
  Let $Q \in Y(\Fbar)$ be such that $\pi_0(Q) \in V_0 (F)$.  Then the field of
  moduli of $(Y,f;\calR;Q)$ equals $F$ by Proposition \ref{prop:fomvsfod}. We
  get a descent $(Y_0;f_0;\calR_0;Q_0)$ of $(Y,f;\calR;Q)$ to $F$ by Theorem
  \ref{thm:BirchCurve}, which also gives a descent of $(Y,f;\calR)$ to $F$.
\end{proof}

\begin{rmk}
  Theorem \ref{thm:DebEms} applies in particular when $Y$ is not equipped with
  a rigidification. Consider for example the marked curve case $Y = X$ where
  $X$ is hyperelliptic, with hyperelliptic involution $\iota$; one can then
  show \cite{LRS} that a descent to $F$ induced by a point $P$ as in Theorem
  \ref{thm:twists} always exists, except possibly if $g(X)$ and $\# \Aut (X) /
  \langle \iota \rangle $ are both odd. That is, except in these special cases
  any descent can be obtained by marking a point on $X$. Note that the descents
  obtained will be of a rather special kind; indeed, Remark \ref{rmk:ratbranch}
  shows that these all have a rational branch over $Q$, and hence in particular
  a rational point.
\end{rmk}

\begin{rmk}
  The results in this section can also be obtained by using a \defi{tangential
  base point} (a germ of a regular function at a given point) as in Deligne
  \cite[\S 15]{Deligne} by splitting an exact sequence of fundamental groups,
  as explained by D\`ebes--Emsalem \cite{DebesEmsalem}. We prefer our new
  approach for two reasons. First, a choice of tangential base point is
  noncanonical, so it is cleaner (and more intuitive) to instead refine points
  into branches, which does not require such a choice. Second, the formalism of
  tangential base points has not been extended to the wildly ramified case to
  our knowledge, whereas our definition immediately covers this case. Phrased
  differently, we do not need to split the exact sequence of fundamental groups
  \cite{DebesEmsalem}, which is obtained only after restricting attention to
  prime-to-$p$ parts.
\end{rmk}

\section{Descent of marked curves}\label{sec:XP}

This section focuses on the explicit descent on marked curves, notably
hyperelliptic curves and plane quartics. Before treating these examples, we
indicate a general method for descending marked curves.

\subsection{Determining a Weil descent}

Let $K \ext F$ be a Galois extension, and let $(Y,f;\calR)$ be a marked map
over $K$. Suppose that the extension $K \ext F$ and the automorphism group
$\Aut (Y,f;\calR)$ are both finite. Then the following method finds all
possible descents of $(Y,f;\calR)$ with respect to the extension $K \ext F$.

\begin{algm}\label{algm:computedescent}
  This method takes as input a marked map $(Y,f;\calR)$ over $K$ and produces
  as output all descents $(Y_0,f_0;\calR_0)$ of $(Y,f;\calR)$ to $F$ up to
  isomorphism over $F$.

  \begin{enumalg}
    \item Compute a presentation $\Gamma = \langle \Sigma \mid \Pi \rangle$ of
      the Galois group $\Gamma = \Gal(K \ext F)$ in finitely many generators
      $\Sigma$ and relations $\Pi$.
    \item Compute $G = \Aut (Y,f;\calR)(K)$.
    \item For all $\sigma\in \Sigma$, compute an isomorphism $\psi_{\sigma} :
      (\lexp{\sigma}{Y},\lexp{\sigma}{f};\lexp{\sigma}{\calR})
      \xrightarrow{\sim} (Y,f;\calR)$ over $K$. If for one of these generators
      no such isomorphism exists, output the empty set and terminate.
    \item For all tuples $(\phi_{\sigma})_{\sigma \in \Sigma}$ in the product
      $\prod_{\sigma \in \Sigma} G \psi_{\sigma}$, use the relation
      \eqref{eq:WeilCoc} to determine the value of the corresponding cocycle on
      the relations in $\Pi$; retain a list $\calL$ of those tuples for which
      the corresponding cocycle is trivial on all elements of $\Pi$.
    \item For the tuples $(\phi_{\sigma})_{\sigma \in \Sigma}$ in $\calL$,
      construct a coboundary morphism $\phi_0 : (Y,f;\calR) \xrightarrow{\sim}
      (Y_0,f_0;\calR_0)$ to obtain a descent of $(Y,f;\calR)$.  Output this set
      of descents.
  \end{enumalg}
\end{algm}

\begin{rmk}
  Method \ref{algm:computedescent} applies to smoothly marked maps
  $(Y,f;\calR)$ over the separable closure $\Fbar$ to detect the existence of a
  descent: by Corollary \ref{cor:splitting}, we can always reduce the existence
  of a descent from $\Fbar$ to a finite extension $K \ext F$.  (There may be
  infinitely many descents of $(Y,f;\calR)$ over $\Fbar$, but only finitely
  many coming from a given finite extension $K \ext F$.)
\end{rmk}

\begin{rmk}
  In step 1 of the above method, one can work instead with the full group
  $\Gamma$ itself rather than generators and relations; then one loops over the
  elements of the group and checks compatibility with products.
\end{rmk}

The correctness of this method follows by the general approach in the previous
section. For now, we present only a `method' rather than a true algorithm, as
each of these steps may involve a rather difficult computer algebra problem in
general; however, efficient algorithms for finding isomorphisms exist for
smooth hyperelliptic or plane curves, making all but the final step of the
method algorithmic. The construction of a coboundary is somewhat more involved
but can be obtained in the aforementioned cases as well, essentially by finding
points on a certain variety over the ground field. We do not go into these
general problems here, as they form a substantial challenge of their own, but
we do indicate how to proceed in special cases below.

\begin{rmk}
  If $(Y,f;\calR)$ is already defined over $F$, the method above will find the
  \defi{twists} of $(Y,f;\calR)$ over $K$, that is, the set of $F$-isomorphism
  classes of marked maps curves over $F$ that become isomorphic to
  $(Y,f;\calR)$ over $K$.  In this case the isomorphisms $\phi_{\sigma}$ are
  themselves already in $G$, so this computation is slightly easier.
\end{rmk}

\subsection{Descent of marked smooth curves of genus at most
one}\label{subsec:lowgenus}

We begin by disposing of two easy cases in low genus.

\begin{lem}\label{lem:g0caseXP}
  Let $(X;P)$ be marked curve over $\Fbar$ with $X$ smooth of genus $\leq 1$.
  Then $(X;P)$ has field of moduli $F$.  If $g=0$, a descent to $F$ is given by
  $(X_0;P_0) = (\PP^1; \infty)$; if $g=1$, then the field of moduli is equal to
  $F(j(\Jac(X)))$.
\end{lem}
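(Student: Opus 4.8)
The plan is to prove Lemma~\ref{lem:g0caseXP} by separately treating the genus $0$ and genus $1$ cases, in each case exhibiting an explicit isomorphism witnessing that the conjugates $(\lexp{\sigma}{X};\lexp{\sigma}{P})$ are isomorphic to $(X;P)$, and then computing the fixed field of the Galois elements that admit such an isomorphism. First I would dispose of the genus $0$ case. Since $X$ is smooth of genus $0$ over the separably closed field $\Fbar$, it is isomorphic to $\PP^1_{\Fbar}$, and the single marked point $P$ can be sent to $\infty$ by an automorphism of $\PP^1$. Concretely, because $\Aut(\PP^1;\infty)$ acts transitively on the remaining points, the pair $(X;P)$ is isomorphic over $\Fbar$ to $(\PP^1;\infty)$. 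For any $\sigma \in \Gamma$, the conjugate $(\lexp{\sigma}{X};\lexp{\sigma}{P})$ is again a smooth genus $0$ curve with a marked point, hence also isomorphic to $(\PP^1;\infty)$, so that $\lexp{\sigma}{X} \simeq X$ as marked curves for \emph{all} $\sigma$. This shows the field of moduli is $F$, and the model $(\PP^1;\infty)$, being defined over $F$, exhibits the descent explicitly.

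Next I would handle the genus $1$ case. As observed in the proof of Lemma~\ref{lem:infg0}, marking the smooth point $P$ equips $(\widetilde{X},\widetilde{P})=(X,P)$ with the structure of an elliptic curve $E$ (with $P$ as origin). The key classical fact is that two elliptic curves over a separably closed field are isomorphic if and only if they have the same $j$-invariant, and that $j$ is a rational function in the coefficients, hence compatible with the Galois action: $j(\lexp{\sigma}{E}) = \sigma(j(E))$. Therefore $(\lexp{\sigma}{X};\lexp{\sigma}{P}) \simeq (X;P)$ as pointed curves precisely when $\sigma$ fixes $j(\Jac(X)) = j(E)$. Taking the fixed field of this stabilizer gives that the field of moduli equals $F(j(\Jac(X)))$.

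The main subtlety will be justifying the \emph{pointed} isomorphism in genus $1$ rather than a mere isomorphism of curves: an isomorphism $\lexp{\sigma}{E} \xrightarrow{\sim} E$ of elliptic curves is required to respect the origin, which is exactly the marking $\widetilde{P}_1 = P$, so the identification of $j(E)$ with $j(\Jac(X))$ must go through the fact that the Jacobian of a genus $1$ curve with a rational point is that curve itself as an elliptic curve. I would make this precise by noting that $\Jac(X) \simeq (X,P)$ canonically once $P$ is chosen as origin, so that the moduli of the pair $(X;P)$ is genuinely controlled by the single invariant $j$. The genus $0$ case carries no such obstruction since all smooth pointed genus $0$ curves are isomorphic, making the field of moduli unconditionally equal to $F$.
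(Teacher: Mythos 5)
Your proof is correct and follows essentially the same route as the paper: the genus $0$ case is handled by the model $(\PP^1;\infty)$, and the genus $1$ case by viewing $(X;P)$ as the elliptic curve $\Jac(X)$ with origin $P$ and invoking the $j$-invariant. The only cosmetic difference is that you compute the field of moduli directly as the fixed field of the stabilizer $\{\sigma : \sigma(j)=j\}$, whereas the paper exhibits the descent model $(J;\infty)$ over $F(j)$; both arguments rest on the same fact that the $j$-invariant classifies elliptic curves over the separably closed field $\Fbar$ (which, as you implicitly use, requires that twists split already over the separable closure, a point that holds but deserves a word of care in characteristics $2$ and $3$).
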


\begin{proof}
  The case $g=0$ is immediate.  So suppose $g=1$.  By translation on the genus
  $1$ curve $X$, we can take $(X_0;P_0) = (J ; \infty)$, where $J=\Jac(X)$ is
  the Jacobian of $X$ with origin $\infty$, which is defined over $F(j)$ where
  $j$ is the $j$-invariant of $J$.
\end{proof}

\subsection{Hyperelliptic curves}\label{subsec:hypcase}

We now pass to a class of examples that can be treated in some generality,
namely that of smooth hyperelliptic curves of genus at least $2$. Throughout,
we suppose that the base field $F$ does not have characteristic $2$.

Let $X$ be a smooth hyperelliptic curve over $\Fbar$ and let $\iota:X \to X$ be
the hyperelliptic involution.  Let $W=\langle \iota \rangle \backslash X$ and
$V = \Aut (X) \backslash X$. We get a sequence of natural projection maps
\begin{equation}\label{eqn:quotseq}
  X \stackrel{q}{\longrightarrow} W \stackrel{p}{\longrightarrow} V;
\end{equation}
the map $p$ is the quotient of $W$ by the \defi{reduced automorphism group} $G
= \Aut (X)(\Fbar) / \langle \iota \rangle$ of $W$ (or of $X$, by abuse).

We can identify $G$ with a subgroup of $\PGL_2 (\Fbar)=\Aut(W)(\Fbar)$. For
simplicity, we also suppose that \emph{$G$ does not contain any non-trivial
unipotent elements}.  This hypothesis implies that $G$ is isomorphic to one of
the finite groups $C_n, D_n, A_4, S_4$ and $A_5$.

The reduced automorphism group $G$ is also described as $G = \Aut(W;D)(\Fbar)$,
where $D$ is the branch divisor of $q$, or equivalently, the image on $W$ of
the divisor of Weierstrass points on $X$; thus $X$ can be recovered from
$(W;D)$ over $\Fbar$ by taking a degree $2$ cover of $W$ ramified over $D$.

\begin{rmk}
  Even if $(W;D) = (W_0;D_0)$ is defined over $F$, it is \emph{not} always
  possible to construct a cover $X_0$ of $(W_0;D_0)$ over $F$ whose branch
  divisor equals $D_0$.  Indeed, let $X$ be a hyperelliptic curve with field of
  moduli $F$ and $\Aut(X)(\Fbar)=\langle \iota \rangle$.  Then $W = V$, and
  there exists a canonical descent $W_0$ of $W$.  Moreover, by the same
  argument that we used to obtain (\ref{eq:PRat}), $D$ transforms to a
  $F$-rational divisor $D_0$. If the aforementioned cover could be constructed
  over $F$, then $X_0$ would be a descent of $X$.  But as is known classically
  (see also Remark \ref{rmk:ShimEarle}), there exist hyperelliptic curves of
  genus $2$ with generic automorphism group that do not descend.
\end{rmk}

Let $R \in X(\Fbar)$ and consider the marked curve $(X;R)$ over $\Fbar$. Let $Q
= q(R)$ be the image of $R$ on $W$, and similarly let $P = p(q(R))$ be its
image on $V$. Then we can also reconstruct $(X;R)$ from the datum $(W;Q;D)$
obtained by rigidifying $(W;Q)$ with the branch divisor $D$ of $q$.

\begin{prop}
  With the above notation, let $X'$ be a curve over $\Fbar$ obtained as a
  degree $2$ cover of $W$ ramifying over $D$, and let $R'$ be any preimage of
  $Q$ under the corresponding covering map. Then $(X';R')$ is isomorphic to
  $(X;R)$ over $\Fbar$.
\end{prop}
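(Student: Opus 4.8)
The plan is to exploit the fact that, because $X$ is hyperelliptic, the quotient $W = \langle \iota \rangle \backslash X$ is isomorphic to $\PP^1$ over $\Fbar$, so the $2$-torsion of its Picard group $\operatorname{Pic}(W)[2]$ vanishes. Degree $2$ covers of $W$ with a fixed reduced branch divisor $D$ of even degree form a torsor under $\operatorname{Pic}(W)[2]$ (two covers $y^2 = h_1$ and $y^2 = h_2$ with the same branch divisor have $h_1/h_2$ of even order at every point, hence on $\PP^1$ equal to a constant times a square, hence a square since $\opchar F \neq 2$ and $F = \Fbar$). Thus up to isomorphism over $\Fbar$ there is exactly one such cover. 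First I would therefore produce an isomorphism $\theta : X \xrightarrow{\sim} X'$ over $\Fbar$ compatible with the two covering maps $q : X \to W$ and $q' : X' \to W$, i.e.\ with $q' \theta = q$; concretely both $X$ and $X'$ are realized as the unique double cover $y^2 = f$ of $W \cong \PP^1$, where $f$ is the function whose divisor of odd-order zeros and poles equals $D$.

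Next I would track the marked point. Since $q' \theta = q$, we have $q'(\theta(R)) = q(R) = Q$, so $\theta(R)$ lies in the fiber $q'^{-1}(Q)$; by hypothesis $R'$ lies in the same fiber. If $Q$ lies on $D$, this fiber consists of a single ramification point, forcing $\theta(R) = R'$, and $\theta$ is already an isomorphism of marked curves $(X;R) \xrightarrow{\sim} (X';R')$. If $Q \notin D$, then $q'^{-1}(Q) = \{R', \iota'(R')\}$ consists of two distinct points, where $\iota'$ is the hyperelliptic involution of $X'$, i.e.\ the nontrivial deck transformation of $q'$. If $\theta(R) = R'$ we are done; otherwise $\theta(R) = \iota'(R')$, and I would replace $\theta$ by $\iota' \circ \theta$. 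Because $q' \iota' = q'$ this remains a cover isomorphism, and now $(\iota' \circ \theta)(R) = \iota'(\iota'(R')) = R'$, yielding the desired isomorphism of marked curves.

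The one genuinely nontrivial input is the uniqueness of the double cover over $\Fbar$, and the point to handle with care is the identification $W \cong \PP^1$ together with the resulting vanishing of $\operatorname{Pic}(W)[2]$; everything afterwards is bookkeeping with the deck group $\{1, \iota'\}$, which acts transitively on each fiber of $q'$ over a non-branch point and trivially over the branch points. I expect the only minor subtlety in a full write-up to be the clean treatment of $D$ at the point at infinity when exhibiting the cover explicitly as $y^2 = f$, but this does not affect the abstract uniqueness statement and can be avoided by arguing divisor-theoretically throughout.
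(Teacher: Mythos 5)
Your proof is correct and takes essentially the same route as the paper: the paper's own (two-line) proof invokes exactly your two ingredients, namely that a hyperelliptic curve over $\Fbar$ is uniquely determined by the branch locus of its hyperelliptic involution, and that the involution acts transitively on the fibers of $X \to W$, so the cover isomorphism can be composed with a deck transformation to match the marked points. Your square-class (Kummer-theoretic) argument on $W \cong \PP^1$ simply supplies a proof of the uniqueness fact that the paper cites as known.
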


\begin{proof}
  This follows from the fact that a hyperelliptic curve over $\Fbar$ is
  uniquely determined by the branch locus of its hyperelliptic involution
  $\iota$, combined with the fact that $\iota$ acts transitively on the fibers
  of the quotient $X \to W$.
\end{proof}

We now suppose that $(X;R)$ has field of moduli $F$, which we can do without
loss of generality. By Theorem~\ref{thm:BirchCurve}, we know that $(X;R)$
descends to a marked curve $(X_0;R_0)$ over $F$, which upon taking quotients by
normal subgroups gives rise to a sequence of marked curves and morphisms
\begin{equation}\label{eqn:quotseq0}
  (X_0; R_0) \stackrel{p_0}{\longrightarrow} (W_0; Q_0)
  \stackrel{q_0}{\longrightarrow} (V_0; P_0)  .
\end{equation}
over $F$. In what follows, we will show how to calculate the descent $(X_0;
R_0)$ explicitly. Our methods do not yet need branches; for pointed
hyperelliptic curves, it suffices to use more elementary techniques involving
morphisms between curves of genus $0$.

We will first construct the descent $(V_0; P_0)$ of $(V, P)$. This is done as
in D\`ebes--Emsalem \cite{DebesEmsalem}; we start with any collection of
isomorphisms $\{ \phi_{\sigma} : \lexp{\sigma}{X} \to X \}_{\sigma \in
\Gamma_F}$ and consider the induced maps $\chi_{\sigma} : \lexp{\sigma}{V}
\xrightarrow{\sim} V$. By construction, the collection $\{ \chi_{\sigma}
\}_{\sigma \in \Gamma_F}$ satisfies the Weil cocycle relation. Therefore there
exists a map $\chi_0 : V \xrightarrow{\sim} V_0$ to a descent $V_0$ of $V$ over
$F$ such that $\chi_{\sigma} = \chi_0^{-1}\lexp{\sigma}{\chi_0}$. By the same
argument that we used to obtain (\ref{eq:PRat}), we see that $P_0 = \chi_0 (P)$
becomes an $F$-rational point on $V_0$.

The isomorphisms $\phi_{\sigma} : \lexp{\sigma}{X} \xrightarrow{\sim} X$ can be
explicitly calculated \cite{ANTS}. After determining the induced maps
$\chi_{\sigma} : \lexp{\sigma}{V} \xrightarrow{\sim} V$ between genus $0$
curves, we can explicitly calculate the map $\chi_0$, and with it the point
$P_0$ \cite{HidalgoReyes,Sadi}. We therefore assume the descent $\chi_0 : (V;
P) \xrightarrow{\sim} (V_0; P_0)$ to be given, and will now discuss how to
reconstruct a descent $(X_0; R_0)$ of $(X; R)$ from it.

In one case, determining $(X_0; R_0)$ turns out to be particularly easy.
Suppose that the reduced automorphism group of the pair $(X, R)$ is trivial, so
that $(W_0; Q_0) = (V_0; P_0)$. Then the image $D_0$ of the branch divisor $D$
on $V_0$ is defined over $F$. Since $V_0$ admits the point $P_0$ over $F$, it
is isomorphic to $\PP^1$ over $F$. Let $V_0 \xrightarrow{\sim} \PP^1$ be any
isomorphism over $F$ that sends $P_0$ to $\infty$, and let $p_0$ be a monic
polynomial cutting out the image of $\Supp (D_0) \backslash \infty$. Let
\begin{equation} \label{eq:X0desc}
  X_0 : y^2 = p_0 (x)
\end{equation}
If $R$ is not a Weierstrass point of $X$, then $\infty \notin \Supp (D_0)$, so
that $p_0$ is of even degree. In this case we let $R_0$ be the point at
infinity corresponding to the image of $(0,1)$ under the change of coordinates
$(x,y) \leftarrow (1/x,y/x^{g+1})$.  On the other hand, if $R$ is a Weierstrass
point of $X$, then $\infty \in \Supp (D_0)$ and $p_0$ is of odd degree, in
which case we let $R_0$ be the point corresponding to $(0,0)$ under the
aforementioned change of coordinates.

The following proposition is then immediate.

\begin{prop}\label{prop:VtoX}
  Let $(X;R)$ be a marked hyperelliptic curve over $\Fbar$ whose reduced
  automorphism group is trivial. Then the pair $(X_0; R_0)$ defined in
  \eqref{eq:X0desc} is a descent of $(X; R)$.
\end{prop}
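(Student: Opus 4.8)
The plan is to verify the two properties characterizing a descent: that $(X_0; R_0)$ is defined over $F$, and that it becomes isomorphic to $(X; R)$ over $\Fbar$. Since $(X; R)$ is assumed to have field of moduli $F$, producing such an $(X_0; R_0)$ proves the statement. The geometric engine is the preceding proposition, which recovers $(X;R)$ up to $\Fbar$-isomorphism from any degree $2$ cover of $(W; Q; D)$ together with a preimage of $Q$. Under the hypothesis that the reduced automorphism group of $(X, R)$ is trivial we have $(W_0; Q_0) = (V_0; P_0)$, and $(V_0; P_0)$ is realized over $F$ as $(\PP^1; \infty)$ equipped with the $F$-rational branch divisor $D_0$; the equation \eqref{eq:X0desc} is exactly this reconstruction written in an affine coordinate.

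First I would check that $(X_0; R_0)$ is defined over $F$. The isomorphism $V_0 \xrightarrow{\sim} \PP^1$ is chosen over $F$ and carries the $F$-rational point $P_0$ to $\infty$; since $D_0$ is $F$-rational (by the argument producing \eqref{eq:PRat}), the divisor $\Supp(D_0) \setminus \{\infty\}$ is $F$-rational and its monic defining polynomial $p_0$ lies in $F[x]$, so $X_0 : y^2 = p_0(x)$ is defined over $F$. The rationality of $R_0$ is then a short coordinate check, with $g = g(X)$. If $R$ is not a Weierstrass point then $\infty \notin \Supp(D_0)$ and $\deg p_0 = 2g+2$; writing $x = 1/u$ and $y = v/u^{g+1}$ yields $v^2 = u^{2g+2} p_0(1/u)$, whose value at $u = 0$ is the leading coefficient $1$ of $p_0$, so both points $v = \pm 1$ over $\infty$ are $F$-rational and $R_0$ is the one with $v = 1$. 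If $R$ is a Weierstrass point then $\infty \in \Supp(D_0)$ and $\deg p_0 = 2g+1$, and the same substitution exhibits the unique $F$-rational point $(u,v) = (0,0)$ over $\infty$. This dichotomy is legitimate because being a Weierstrass point is equivalent to $Q \in \Supp(D)$, a condition reflected over $F$ in whether $\infty \in \Supp(D_0)$.

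It then remains to exhibit the isomorphism over $\Fbar$, which is where I expect the only genuine care is needed. Base changing \eqref{eq:X0desc} to $\Fbar$ presents $X_0$ as a degree $2$ cover of $V_0 = W_0 \simeq \PP^1$ ramified precisely over $D_0$, which base-changes to the branch divisor $D$; and by the previous paragraph $R_0$ lies in the fiber over $\infty = P_0 = Q_0$, so $R_0$ is a preimage of $Q_0$ under the covering map. The preceding proposition therefore supplies an isomorphism $(X_0; R_0) \xrightarrow{\sim} (X; R)$ over $\Fbar$, completing the proof. The subtle point—justifying the word \emph{immediate}—is precisely the matching of the marked point: one must confirm that $R_0$ falls in the correct fiber (over $Q_0$) and is $F$-rational. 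The ambiguity between $v = \pm 1$ in the even-degree case is harmless, since the hyperelliptic involution $\iota$ acts transitively on the fibers of $X_0 \to V_0$, so any preimage of $Q_0$ yields the same marked curve up to $\Fbar$-isomorphism; the construction merely pins down a convenient $F$-rational representative so that $(X_0; R_0)$ is genuinely defined over $F$.
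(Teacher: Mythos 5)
Your proof is correct and takes essentially the same route as the paper, which simply declares the proposition ``immediate'' from the construction of \eqref{eq:X0desc} and the preceding proposition: you supply exactly the details that declaration relies on, namely the $F$-rationality of $p_0$ (hence of $X_0$) and of $R_0$ via the coordinate change $(x,y) \leftarrow (1/x,y/x^{g+1})$, and the $\Fbar$-isomorphism obtained by viewing $X_0$ as a degree $2$ cover of $W \simeq \PP^1$ ramified over $D$ with $R_0$ a preimage of $Q$. Your closing remark that the $v = \pm 1$ ambiguity is harmless because $\iota$ acts transitively on fibers is precisely the content of the paper's preceding proposition (``let $R'$ be \emph{any} preimage of $Q$''), so nothing is missing.
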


\begin{rmk}
  In Proposition \ref{prop:VtoX}, the indicated point $R_0$ on $X_0$ admits a
  rational branch. In the case when $R$ is a not a Weierstrass point of $X$,
  this is because $R_0$ is unramified, whereas in the latter case we get a
  rational branch for the map $(x,y) \mapsto x$. This follows from a local
  power series expansion at infinity; since $y^2 = x + O(x^2)$, we get the
  rational branch corresponding to the root $y = \sqrt{x} + O(x)$.

  The quadratic twist defined by $c_0 y^2 = p_0 (x)$ also gives a descent of
  the marked curve $(X;R)$; but if $c_0 \in F^{\times} \setminus F^{\times 2}$,
  the map $(x,y) \to x$ does not admit a rational branch at the point at
  infinity, rather the morphism $(x,y) \mapsto c_0 x$ gives a rational branch.
  In general, the passage from points to branches gives rise to further
  cohomological considerations, and any branch on a descended curve $(X_0;
  R_0)$ for the quotient map $\Aut (X_0; R_0) \backslash (X_0; R_0)$ becomes
  rational on some uniquely determined twist of $(X_0; R_0)$.
\end{rmk}

Now suppose that the pair $(X;R)$ has nontrivial reduced automorphism group $G
= \Aut (X) / \langle \iota \rangle$. This automorphism group is then cyclic, of
order $n=\# G$. We will now show how to obtain a descent of $(X;R)$ in this
case.

Let $\alpha$ be a generator of $G$, and let $Q = q (R)$ be the image of $R$ on
the quotient $W = \langle \iota \rangle \backslash X$. Since we assumed that
$\alpha$ was not unipotent, we can identify $W$ with a projective line $\PP^1$
with affine coordinate $x$ in such a way that $Q$ corresponds to the point
$\infty \in \PP^1 (\Fbar)$ and such that $\alpha$ is defined by
\begin{equation}
  \alpha(x) = \zeta_n x
\end{equation}
with $\zeta_n$ a primitive $n$th root of unity in $\Fbar$.  Let $p$ be the
polynomial defining $X$ as a cover of $W$. Then because of our normalizations,
we either have
\begin{equation}
  p (x) = \pi (x^n)
\end{equation}
or
\begin{equation}
  p (x) = x \pi (x^n) .
\end{equation}
for some polynomial $\pi$. In the latter case, there is a single point at
infinity that is a Weierstrass point of $X$, whereas in the former there are
two ordinary points at infinity.

We normalize $\pi$ to be monic by applying a scaling in $x$ (which does not
affect the above assumptions on $\alpha$ and $Q$) and denote its degree by $d$.
By our assumption that the reduced automorphism group is of order
$n$, if we write
\begin{equation}
  \pi (x) = \sum_{i=0}^{d} \pi_{i} x^{d-i} = x^d + \pi_1 x^{d-1} + \pi_2
  x^{d-2} + \dots + \pi_{d-1} x + \pi_d
\end{equation}
then the subgroup of $\Z$ generated by $\{ i : \pi_i \neq 0 \}$ equals $\Z$.

Given a monic polynomial $\pi$ as above, we define the hyperelliptic curves
$X_{\pi}$ and $X'_{\pi}$ by
\begin{equation}
  X_{\pi} : y^2 = \pi (x)
\end{equation}
and
\begin{equation}
  X'_{\pi} : y^2 = x \pi (x) .
\end{equation}
The former curve admits a distinguished point $R_{\pi} = (0:1)$ at infinity
after the coordinate change $(x,y) \leftarrow (1/x,y/x^{g+1})$, whereas the
latter admits a Weierstrass point at infinity (which corresponds to $R'_{\pi} =
(0,0)$ under the same coordinate transformation).

Our only freedom left to modify the coefficients of $\pi$ is scaling in the
$x$-coordinate: this induces an action of $\Fbar$ on the coefficients $\pi_i$,
given by
\begin{equation}
  \alpha (\pi_1 , \pi_2, \ldots , \pi_d) = (\alpha^1 \pi_1 , \alpha^2 \pi_2 ,
  \ldots , \alpha^d \pi_d).
\end{equation}
We can interpret the coefficients of $\pi$ as a representative of a point in
the corresponding weighted projective space. Now there is a notion of a
\defi{normalized representative} of such a point \cite{LR}. We call a
polynomial $\pi$ \defi{normalized} if its tuple of coefficients is normalized.

The uniqueness of the normalized representative then shows the following
theorem.
\begin{thm}
  Let $(X;R)$ be a marked hyperelliptic curve over $\Fbar$ whose reduced
  automorphism group is tamely cyclic of order $n$. Then $(X;R)$ admits a
  unique model of the form $(X_{\pi}, R_{\pi})$ or $(X'_{\pi}, R'_{\pi})$, with
  $\pi$ normalized, depending on whether $R$ is a Weierstrass point of $X$ or
  not.
\end{thm}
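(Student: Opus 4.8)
The plan is to deduce the theorem from the correspondence, already implicit in the preceding construction, between $\Fbar$-isomorphism classes of such marked curves and points of a weighted projective space, together with the uniqueness of the normalized representative established in \cite{LR}.

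First I would settle existence. Given $(X;R)$, which by Theorem \ref{thm:BirchCurve} we may assume has field of moduli $F$, I pass to the reduced datum $(W;Q;D)$ and fix the identification $W \cong \PP^1$ sending $Q$ to $\infty$ and sending the distinguished generator $\alpha$ of the reduced automorphism group to $x \mapsto \zeta_n x$, for a once-and-for-all chosen primitive $n$th root of unity $\zeta_n$; among the $\varphi(n)$ generators of $\langle \alpha \rangle$ there is a unique one acting as exactly $\zeta_n$, so this is a canonical choice. Since $D$ is $\langle \alpha \rangle$-invariant, its support away from the two fixed points $0,\infty$ is a union of orbits of size $n$, so the defining polynomial is a polynomial in $x^n$ up to a possible factor of $x$; whether $R$ is a Weierstrass point then dictates the shape $y^2 = \pi(x)$ or $y^2 = x\pi(x)$ recorded in the two models, and scaling $\pi$ to be monic produces a representative of the required form.

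For uniqueness, the crucial point is that the $\Fbar$-isomorphism class of $(X;R)$ depends only on the orbit of the coefficient tuple $(\pi_1,\ldots,\pi_d)$ under the residual coordinate freedom. I would argue that any isomorphism $(X_{\pi};R_{\pi}) \xrightarrow{\sim} (X_{\pi'};R_{\pi'})$ descends to an automorphism of $(W;Q)$ normalizing $\langle \alpha \rangle$ together with its distinguished generator; a direct computation in $\PGL_2$ shows that such an automorphism fixes both $0$ and $\infty$ and is therefore a scaling $x \mapsto a x$. The induced action on coefficients is exactly the weighted $\G_m$-action $\pi_i \mapsto \alpha^i \pi_i$ displayed above, the overall scalar multiplying $\pi$ being absorbed by a rescaling of $y$ (here using that square roots of nonzero elements exist in $\Fbar$, as $\opchar \Fbar \neq 2$). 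Conversely every such scaling yields an isomorphism of marked curves. Hence $(X;R)$ corresponds to a well-defined point of the weighted projective space with weights $(1,\ldots,d)$, and by the uniqueness of its normalized representative \cite{LR} there is exactly one normalized $\pi$ realizing it.

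The main obstacle I anticipate is pinning down this correspondence precisely, rather than the final appeal to \cite{LR}: one must verify that normalizing the generator to $x \mapsto \zeta_n x$ removes the ambiguity coming from the other generators of the cyclic group (distinct choices are generically non-conjugate in $\PGL_2$), that the passage from $(X;R)$ to $(W;Q;D)$ and back is a genuine bijection on isomorphism classes, and that the two model shapes $X_{\pi}$ and $X'_{\pi}$ are mutually exclusive and jointly exhaustive according to the Weierstrass condition on $R$. Once these bookkeeping points are in place, the residual freedom acts as exactly the weighted scaling for which \cite{LR} provides a unique normal form, and uniqueness of the model follows at once.
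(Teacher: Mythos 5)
Your existence step and your justification of uniqueness over $\Fbar$ --- reducing any isomorphism of marked models to a scaling $x \mapsto ax$ of the quotient $\PP^1$, so that the coefficient tuple of $\pi$ is well defined up to the weighted $\G_m$-action, and then invoking the unique normalized representative of \cite{LR} --- are sound, and they usefully flesh out what the paper only asserts (that the only remaining freedom is scaling in the $x$-coordinate). But this proves only the statement over $\Fbar$, and that is not what the paper's proof establishes, nor what the theorem is for. In the paper this theorem is the culmination of the computation of an explicit \emph{descent}: $(X;R)$ is assumed, without loss of generality, to have field of moduli $F$, and the entire content of the paper's proof beyond the (clear) existence step is that the unique normalized $\pi$ automatically has coefficients in $F$. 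Concretely, for $\sigma \in \Gal(\Fbar \ext F)$ the conjugate polynomial $\lexp{\sigma}{\pi}$ is again normalized (the normalization conditions of \cite{LR} are Galois-stable), and since $(\lexp{\sigma}{X};\lexp{\sigma}{R}) \simeq (X;R)$ by the field-of-moduli hypothesis, it represents the same point of the weighted projective space; uniqueness of the normalized representative then forces $\lexp{\sigma}{\pi} = \pi$, so $\pi$ has coefficients in $F$ and $(X_{\pi};R_{\pi})$, resp.\ $(X'_{\pi};R'_{\pi})$, is a model over the field of moduli. Your proposal never performs this Galois argument --- the field of moduli appears only in one passing remark and is never used --- so the rationality of the model, which is the point of the theorem in its context (compare the remark following it in the paper: $M^{\Fbar}_F(X;R)$ is generated by the coordinates of the normalized representative), is left unproved.

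The gap is easily repaired, because your uniqueness statement is precisely the lemma the missing step needs: apply your bijection between $\Fbar$-isomorphism classes and points of the weighted projective space to the isomorphism $(\lexp{\sigma}{X};\lexp{\sigma}{R}) \simeq (X;R)$, note that conjugation by $\sigma$ carries normalized tuples to normalized tuples, and conclude from uniqueness that $\lexp{\sigma}{\pi} = \pi$ for all $\sigma \in \Gal(\Fbar \ext F)$, hence that the normalized model is defined over the field of moduli. With those few lines added, your argument recovers the paper's theorem in full.
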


\begin{proof}
  It is clear from the above that $(X;R)$ admits a model of the indicated form.
  On the other hand, let $\sigma \in G_F$ and consider the polynomial
  $\lexp{\sigma}{\pi}$ obtained by conjugating the coefficients of $\pi$ by
  $\sigma$. If the coefficients of $\pi$ were not stable under Galois, then we
  would find two distinct normalized representatives of the same weighed point,
  a contradiction. Therefore $\lexp{\sigma}{\pi} = \pi$, and since $\sigma$ was
  arbitrary, our theorem is proved.
\end{proof}

\begin{rmk}
  If we do not happen to know the field of moduli of the pair $(X;R)$, then the
  approach above calculates it as well. Indeed, $M^{\Fbar}_F (X;R)$ is
  generated by the coordinates of the normalized representative $c_{I,0}$. A
  similar remark applies to the upcoming Proposition \ref{prop:descg0}.
\end{rmk}


\subsection{The Klein quartic}

We now apply the method of branches to a plane quartic, namely the \defi{Klein
quartic}, defined by the equation
\begin{equation}\label{eq:KleinClassic}
  x^3 y + y^3 z + z^3 x = 0.
\end{equation}
By classical results (see the complete exposition by Elkies \cite{Elkies}),
this curve admits \Belyi\ map of degree $168$ that realizes the quotient map by
its full automorphism group. In what follows, we will determine a model of the
Klein quartic that admits a rational ramification point for this morphism that
is of index $2$. The geometry of the Klein quartic is exceedingly
well-understood due to its moduli interpretation. As such, our result is not
new; it can be found in a different form in work of Poonen--Schaefer--Stoll
\cite{PSS}. However, what makes our method new is the purely algebraic approach
to the problem, which is available in rather greater generality.

Before starting, we modify our model slightly and instead consider the
\defi{rational $S_3$ model} from Elkies \cite{Elkies}, which is given by
\begin{equation}\label{eq:KleinS3}
  X : x^4 + y^4 + z^4 + 6 (x y^3 + y z^3 + z x^3) - 3 (x^2 y^2 + y^2 z^2 + z^2
  x^2) + 3 x y z (x + y + z) = 0 .
\end{equation}
Consider the field $K = \Q (i, \theta)$ where $\theta^4=-7$. Over $K$, the
curve $X$ admits the involution defined by the matrix
\begin{equation}\label{eq:KleinS3invo}
  \left(\begin{array}{ccc}
    2 & -3 & -6 \\
    -3 & -6 & 2 \\
    -6 & 2 & -3
  \end{array}\right) \in \PGL_3(\Q) = \Aut(\PP^2)(\Q)
\end{equation}
This involution has the fixed point
\begin{equation}
  P = ( (i + 1) \theta^3 + 12 i \theta^2 -21 (i-1) \theta + 66 : 3(i + 1)
  \theta^3 + 36i\theta^2 -63 (i-1) \theta - 50 : 124 ) .
\end{equation}
Using branches, we will construct an isomorphism between the curve $(X;P)$ and
its conjugates that satisfies the cocycle relation.

Let $\sigma$ be the automorphism of $K$ such that $\sigma (\theta) = i\theta$
and $\sigma (i)=i$; and similarly let $\tau(\theta) = \theta$ and $\tau (i) =
-i$. Then $\sigma$ and $\tau$ satisfy the standard relations for the dihedral
group of order $8$, the Galois group of the extension $K$.

One surprise here is that it is impossible to construct a Weil cocycle for the
full extension $K \ext \Q$. In fact there exist isomorphisms $\phi_{\sigma} :
(\lexp{\sigma}{X};\lexp{\sigma}{P})$ and $\phi_{\tau} : (\lexp{\tau}{X};
\lexp{\tau}{P})$ but no matter which we take, the automorphism
\begin{equation}\label{eq:aprod}
  \phi_{\sigma}
  \lexp{\sigma}{\phi_{\sigma}}
  \lexp{\sigma^2}{\phi_{\sigma}}
  \lexp{\sigma^3}{\phi_{\sigma}}
\end{equation}
of $X$ that corresponds to the cocycle relation for $\sigma$ is never trivial,
but always of order $2$. This strongly suggests that we should find an
extension $L$ of $\Q$ with Galois group dihedral of order $16$ into which $K$
embeds. Such extensions do indeed exist; one can be found in the ray class
field of $\Q(i)$ of conductor $42$, obtained as the splitting field of the
polynomial $63 t^8 - 70 t^4 - 9$ over $\Q$.

The above extension is exactly the one that shows up when considering the
branches of the \Belyi\ map $q : X \to \Aut (X) \backslash X$, which can be
calculated by using the methods in Poonen--Schaefer--Stoll \cite{PSS}. Note
that it would have sufficed for our purposes to quotient out by the involution
fixing $P$; however, we use the \Belyi\ map to show that it, too, can be used
for purposes of descent.

We use $x = s$ as a local parameter at the point $P$, and modify the coordinate
on $\Aut (X) \backslash X$ such that $P$ is sent to $0$. After dehomogenizing
by putting $z = 1$ and determining $y = y(s)$ as a power series in $s$, we
obtain a power series expression in $s$ for $q$ by composition, that is, by
evaluating the \Belyi\ map $q$, considered as an explicit function $q (x,y)$ of
$x$ and $y$, in $( x(s), y(s) ) = (s, y(s))$. This power series looks like
\begin{equation}
  s \to c_2 s^2 + O(s^3)
\end{equation}
The leading coefficient $c_2$ of this power series has minimal polynomial
\begin{equation}\label{eq:branchpol}
  27889 t^4 - 1869588 t^3 - 18805122 t^2 + 1869795900 t - 25658183943
\end{equation}
over the rationals. Since the branch is an inverse of the power series above
under composition, it will look like
\begin{equation}
  s \to (1/\sqrt{c_2}) s^{1/2} + O(s) .
\end{equation}
The field generated by its leading coefficient can therefore be obtained as the
splitting field of the polynomial (\ref{eq:branchpol}) evaluated at $t^2$. This
is indeed the extension $L$ constructed above: we have
\begin{equation}
  \frac{1}{\sqrt{c_2}} = \frac{1}{16032}(211239\eta^7 - 66339\eta^5 -
  163835\eta^3 + 98343\eta)
\end{equation}
where $\eta$ is a root of $63 t^8 - 70 t^4 - 9$.

We now evaluate all Puiseux series for $x,y,z$ involved at the branch
$(1/\sqrt{c_2}) s^{1/2} + O(s)$ instead of $s$. This allows us to study the
Galois action; by dehomogenizing and comparing the conjugated Puiseux series
$(\lexp{\sigma}{X}(s), \lexp{\sigma}{y}(s))$ with the fractional linear
transformation of $(x,y)$ under the automorphisms of $X$, we can read off which
$\alpha_{\sigma}$ maps the former branch to the latter.

There exist elements $\widetilde{\sigma},\widetilde{\tau}$ satisfying the
relations for the dihedral group of order $16$, generating the Galois group of
$L$, and restricting to $\sigma,\tau$ on $K$. For these elements, the conjugate
branch $(\widetilde{\sigma}(x)(s), \widetilde{\sigma}(y)(s))$, respectively
$(\widetilde{\tau}(x)(s), \widetilde{\tau}(y)(s))$, is sent to $(x(s),y(s))$ by
the ambient transformation
\begin{equation}
  \alpha_{\widetilde{\sigma}} =
  \left(\begin{array}{ccc}
            -6 & -2 \theta^2 + 2 &   \theta^2 + 11 \\
     2 \theta^2 + 2 &       -10  & 3 \theta^2 + 1  \\
     -\theta^2 + 11 & -3 \theta^2 + 1 &          2
  \end{array}\right)  ,
\end{equation}
respectively
\begin{equation}
  \alpha_{\widetilde{\tau}} =
  \left(\begin{array}{ccc}
             -6 & 2 \theta^2 + 2 &  -\theta^2 + 11 \\
     -2 \theta^2 + 2 &       -10 & -3 \theta^2 + 1 \\
       \theta^2 + 11 & 3 \theta^2 + 1 &          2
  \end{array}\right)  .
\end{equation}
In particular $\sigma (P)$, respectively $\tau (P)$, is sent to $P$ by
$\alpha_{\widetilde{\sigma}}$, respectively $\alpha_{\widetilde{\tau}}$. By
uniqueness of $\alpha_{\widetilde{\sigma}}$ and $\alpha_{\widetilde{\tau}}$
when considering its action on the branches, we do get a Weil cocycle this
time. Moreover, the corresponding descent leads to a rational branch of the
\Belyi\ map $q$. After calculating a coboundary and polishing the result, we
get the model
\begin{equation}
  X_0 : x^4 + 2 x^3 y + 3 x^2 y^2 + 2 x y^3 + 18 x y z^2 + 9 y^2 z^2 - 9 z^4 =
  0
\end{equation}
as in Poonen--Schaefer--Stoll \cite{PSS}. The curve $X_0$ admits the point
$(0:1:0)$, which is a rational branch for the quotient map $(x:y:z) \mapsto
(x:y:z^2)$ to the curve
\begin{equation}
  x^4 + 2 x^3 y + 3 x^2 y^2 + 2 x y^3 + 18 x y z + 9 y^2 z - 9 z^2 = 0
\end{equation}
in $\PP(1,1,2)$.

We do not give explicit isomorphisms with the models \eqref{eq:KleinClassic}
and \eqref{eq:KleinS3} here, as they are slightly unwieldy to write down; a
matrix $\alpha_0$ inducing an isomorphism $X \xrightarrow{\sim} X_0$ can be
found by solving the equations
\begin{equation}\label{eq:WeilCobMat}
  \begin{split}
    \alpha_0 \alpha_{\sigma} & = \lexp{\sigma}{\alpha_0} \\
    \alpha_0 \alpha_{\tau}   & = \lexp{\sigma}{\alpha_0}
  \end{split}
\end{equation}
up to a scalar, where $\alpha_\sigma,\alpha_\tau \in \PGL_3(L)=\Aut(\PP^2)(L)$
induce the isomorphisms $\sigma,\tau$, respectively.  The equations
\eqref{eq:WeilCobMat} are none other than \eqref{eq:WeilCob}, and are the
explicit version of Step $5$ in Method \ref{algm:computedescent}. Once we lift
the $\PGL_3 (L)$-cocycle $\sigma \mapsto \alpha_{\sigma}$ to $\GL_3 (L)$, then
we can forget about the scalar factor, so that \eqref{eq:WeilCobMat} can be
solved by expressing the entries of $\alpha_0$ as combinations of a $\Q$-basis
of $L$. This in turn reduces solving \eqref{eq:WeilCobMat} to solving a system
of linear equations; it then remains to find a solution of relatively small
height to find a reasonable descent morphism.

\begin{rmk}
  Lifting the cocycle $\sigma \mapsto \alpha_{\sigma}$ mentioned above to
  $\GL_3 (L)$ is not trivial; for general plane curves it requires modifying an
  arbitrary set-theoretic lift by suitable scalars, which in turn comes down to
  determining the rational points on a certain conic. For plane quartics,
  however, things are slightly easier, since we can consider the morphism on
  the space of differentials that our cocycle induces.
\end{rmk}

The case of a ramification point of index $3$ is considerably easier and yields
the model
\begin{equation}
  7 x^3 z + 3 x^2 y^2 - 3 x y z^2 + y^3 z - z^4 = 0 ,
\end{equation}
on which the points $(1:0:0)$ and $(0:1:0)$ are fixed for the automorphism
$(x:y:z) \mapsto (\omega^2 x: \omega y: z)$ where $\omega$ is a primitive cube
root of unity. In this case, we do not insist on the branch being rational in
order to get a nicer model for the marked curve.

Finally, the original model \eqref{eq:KleinClassic} already admits the rational
point $(0:0:1)$, which is stable under the automorphism $(x:y:z) \to (\zeta_7^4
x : \zeta_7^2 y : \zeta_7 z)$ of order $7$.

\begin{rmk}
  Yet another way to find the descent above is to diagonalize the automorphism
  \ref{eq:KleinS3invo} to the standard diagonal matrix with diagonal
  $(1,1,-1)$, which can be done by a $\Q$-rational transformation. The $4$
  fixed points of the involution are then on the line $z = 0$. By transforming
  the coordinates $x$ and $y$ over $\Qbar$ we can ensure that the involution
  remains given by the diagonal matrix $(1,1,-1)$ and the set $4$ points
  remains still defined over $\Q$, while also putting one of these points at
  $(0:1:0)$. This normalization reduces the automorphism group sufficiently to
  make the rest of the descent straightforward.
\end{rmk}

\subsection{Wild branches and wild descent}

Determining branches in the wild case is less intuitive than in the tame case.
It turns out that branches can still be represented by certain power series,
but this time the denominators of the exponents involved are unbounded---so
although the branch belongs to the separable closure of the power series ring,
it cannot be represented by a Puiseux series. We instead have to take recourse
to \defi{generalized power series} \cite{Kedlaya}, in which the denominators of
the exponents form an unbounded set. We illustrate what this involves by
considering two examples.

\begin{exm}\label{exm:genpow}
We start off with the standard Artin--Schreier extension
\begin{equation}\label{eq:ASClassical}
  y^p - y = x .
\end{equation}
The projection $f : (y, x) \to x$ is then unramified outside $\infty$, and in
fact this is a Galois cover, with automorphism group generated by $(x, y)
\mapsto (x, y + 1)$. Transforming coordinates to interchange $0$ and $\infty$,
the extension \eqref{eq:ASClassical} becomes
\begin{equation}\label{eq:ASClassicalMod}
  y^p - y z^{p - 1} = z^{p - 1} .
\end{equation}
Now the map $f$ has $(y, z) \mapsto z$ is a Galois cover, unramified outside
$0$. Its automorphism group is isomorphic to $\Z / p \Z$, and a generator is
given by $(y,z) \mapsto (y + z, z)$.

We want to find a solution to \eqref{eq:ASClassicalMod} in a generalized power
series (with positive exponents). For this, we first set $y = c z^e$. This
gives the equation
\begin{equation}
  c^p z^{p e} - c z^{p - 1 + e} = z^{p - 1} .
\end{equation}
To get cancellation with $z^{p - 1}$, the smallest among the exponents $p e$
and $p - 1 + e$ has to equal $p - 1$. Now since $e > 0$ we can never have that
the latter exponent does the job. We get that $e = 1 - p^{-1}$ and $c^p = 1$,
so we have our leading monomial $z^{1-p^{-1}}$ of $y$. Writing $y =
z^{1-p^{-1}} + c z^e$ and continuing iteratively, we get rational exponents of
$z$ with arbitrarily large powers of $p$ in the denominator. More precisely, we
obtain the generalized power series
\begin{equation}\label{eq:ASCBranch1}
  y = \sum_{n = 1}^{\infty} z^{1 - p^{-n}};
\end{equation}
to be precise, we think of \eqref{eq:ASCBranch1} as encoding a sequence given
by its partial sums, each of which belongs to a ring $k(t^{1/p^n})$ for $n \in
\Z_{\geq 0}$.  These partial sums ``converge'' to a root of
\eqref{eq:ASClassicalMod}, but only in a formal sense.

The exponents $p e$ and $p - 1 + e$ cancel mutually precisely when $e = 1$,
confirming Proposition \ref{prop:constandfaithful} and recovering exactly the
images of \eqref{eq:ASCBranch1} under the automorphism group, to wit
\begin{equation}\label{ASCBranchw}
  y = c z + \sum_{n = 1}^{\infty} z^{1 - p^{-n}}  ,  \; c \in \F_p .
\end{equation}
We see how the wildness of the ramification leads to the denominators of the
exponents $1 - p^{-n}$ being unbounded.
\end{exm}

\begin{exm}
As a second illustration, we use a family of hyperelliptic curves in
characteristic $2$ considered by Igusa \cite{Igusa}, namely
\begin{equation}\label{eq:IgusaFam}
  y^2 - y = x^3 + a x + b x^{-1} .
\end{equation}
This curve has a distinguished point at infinity. Igusa \cite{Igusa} proved
that the family \eqref{eq:IgusaFam} contains duplicates; more precisely,
modifying $(a , b) \mapsto (\zeta_3 a, \zeta_3^{-1} b)$ does not change the
isomorphism class. The corresponding invariant subfield of $\F_2 (a,b)$ is
$\F_2 (a^3,a b)$, and we will use the marked point at infinity to descend to
this subfield.

Transforming projectively as before, we get the equation
\begin{equation}\label{eq:IgusaFamMod}
  X : x - x z = (b + a x^2 + x^4) z^2 .
\end{equation}
The hyperelliptic quotient map is given by $(x,z) \mapsto x$, and the
automorphism group (of either the curve or the marked curve) is generated by
$(x,z) \mapsto (x,z/(z + 1))$. Determining a branch, we obtain one generalized
power series that starts as
\begin{equation}
  z = b^{-1/2} x^{1/2} + b^{-3/4} x^{3/4} + \ldots
\end{equation}

Now note that from \eqref{eq:IgusaFamMod} it follows that the second branch can
be obtained by replacing $z$ by $z + c$, where
\begin{equation}
  c = \frac{x}{b + a x^2 + x^4} = b^{-1} x + a b^{-2} x^3
  + \ldots
\end{equation}
Regardless, note that if we let $\sigma$ be the automorphism sending $(a,b)$ to
$(\zeta_3 a, \zeta_3^{-1} b)$, there is an obvious isomorphism $\phi_{\sigma} :
X^{\sigma} \to X$ given by $(x,z) \mapsto (\zeta_3 x, z)$. There is a second
isomorphism $(x,z) \to (\zeta_3 x, z / (z + 1))$.  However, the first
isomorphism respects the branches, as can indeed be seen by looking at the
exponents in the corresponding expansions alone; since only the second branch
above contains a linear term in $x$, only $\phi_{\sigma}$ can send it to its
conjugate.

Taking a corresponding coboundary corresponding to the cocycle generated by
$\sigma$ is easy and comes down to substituting $b x$ for $x$ in
\eqref{eq:IgusaFamMod}. We get a descent
\begin{equation}
  x - x z =  (1 + a b x^2 + b^3 x^4) z^2
\end{equation}
in which the rational branch is clearly visible. Transforming back to Igusa's
form, we get the representative family
\begin{equation}
  y^2 - y = b^3 x^3 + a b x + x^{-1} .
\end{equation}

This family can also be obtained directly; our purpose in this example was to
illustrate how branches can be used explicitly as a tool for descent.
Descending other families of hyperelliptic curves in this way is a topic for
future work.
\end{exm}

\section{Counterexamples}\label{sec:counter}

In this final section, we prove Theorem B and consider two (counter)examples
that show that for general (non-singular) marked curves the conclusion of
Theorem \ref{thm:BirchCurve} is false. That is, we exhibit pointed curves (and
associated pointed \Belyi\ maps) that are defined over $\C$ and have field of
moduli $\R$ with respect to the extension $\C \ext \R$, yet do not descend to
$\R$.  In the first example, neither the singular curve nor its normalization
descends to $\R$. In the second example, the curve itself is defined over $\R$
but the marked curve is not, so rigidifying by marking a point creates a
descent problem where previously none existed.

\subsection{First example: the curve does not descend}

We will employ the criterion \eqref{eq:WeilCocR} of Weil descent with respect
to the extension $\C \ext \R$.

\begin{lem} \label{lem:descent_notcurve}
  Let $W$ be a variety over $\R$ and let $\rho \in \Aut(W)(\R)$ have order $4$.
  Suppose that $X \subseteq W$ is a curve over $\C$ such that $\rho
  (\overline{X}) = X$ and $\Aut(X)(\C) = \langle \rho^2|_X \rangle \simeq
  \Z/2\Z$.  Suppose $P \in W(\R) \cap X(\C)$ has $\rho(P) = P$.  Then both $X$
  and $(X;P)$ have field of moduli $\R$ with respect to the extension $\C \mid
  \R$ but neither descends to $\R$.
\end{lem}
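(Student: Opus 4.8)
The plan is to apply the real Weil cocycle criterion \eqref{eq:WeilCocR} throughout: since the base extension is $\C \ext \R$, it suffices to analyze the isomorphisms $\phi : \overline{X} \xrightarrow{\sim} X$ and to test the single relation $\phi\overline{\phi} = 1$. First I would record the structural consequence of the hypotheses that drives everything. Conjugating the relation $\rho(\overline{X}) = X$ and using that $\rho$ is defined over $\R$, so that $\overline{\rho} = \rho$, yields $\rho(X) = \overline{X}$; hence $\rho^2$ preserves $X$, so $\rho^2|_X$ is a genuine automorphism of $X$, which by hypothesis is the nontrivial element of $\Aut(X)(\C) \simeq \Z/2\Z$.

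For the field of moduli, I would note that $\rho$ restricts to an isomorphism $\rho|_{\overline{X}} : \overline{X} \xrightarrow{\sim} X$, so $\overline{X} \simeq X$ and $X$ has field of moduli $\R$ with respect to $\C \ext \R$. Because $P \in W(\R)$ we have $\overline{P} = P$, and the hypothesis $\rho(P) = P$ shows that $\rho|_{\overline{X}}$ sends $\overline{P} = P$ to $P$; thus $(\overline{X};\overline{P}) \simeq (X;P)$ and $(X;P)$ likewise has field of moduli $\R$.

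The core of the argument is the failure of the cocycle relation. Since every isomorphism $\overline{X} \xrightarrow{\sim} X$ is of the form $\alpha \cdot \rho|_{\overline{X}}$ with $\alpha \in \Aut(X)(\C) = \{1, \rho^2|_X\}$, there are exactly two candidates, namely $\phi_1 = \rho|_{\overline{X}}$ and $\phi_2 = \rho^2|_X \circ \rho|_{\overline{X}} = \rho^3|_{\overline{X}}$. Using $\overline{\rho} = \rho$ together with $\rho(X) = \overline{X}$ and $\rho(\overline{X}) = X$, I would compute $\overline{\phi_1} = \rho|_X$ and $\overline{\phi_2} = \rho^3|_X$, whence $\phi_1\overline{\phi_1} = \rho^2|_X$ and $\phi_2\overline{\phi_2} = \rho^6|_X = \rho^2|_X$. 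In both cases the outcome is the nontrivial automorphism $\rho^2|_X \neq 1$, so \eqref{eq:WeilCocR} fails for every isomorphism and $X$ does not descend to $\R$.

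Finally, for $(X;P)$ I would observe that both $\phi_1$ and $\phi_2$ fix $P$: indeed $\rho(P) = P$ forces $\rho^k(P) = P$ for all $k$. Hence the isomorphisms of marked curves $(\overline{X};\overline{P}) \xrightarrow{\sim} (X;P)$ are precisely the same two maps $\phi_1, \phi_2$, and the computation above shows that neither satisfies $\phi\overline{\phi} = 1$; therefore $(X;P)$ does not descend to $\R$ either. The only delicate point in the whole argument is the conjugation bookkeeping in the third paragraph, namely keeping track of the domains and codomains of the $\phi_i$ and $\overline{\phi_i}$ and invoking $\overline{\rho} = \rho$ correctly; everything else follows directly from the hypotheses and from the enumeration of the two available isomorphisms.
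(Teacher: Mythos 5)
Your proof is correct and follows essentially the same route as the paper's: exhibit $\rho|_{\overline{X}}$ as an isomorphism fixing $P$ to get field of moduli $\R$, enumerate the only two isomorphisms $\overline{X} \xrightarrow{\sim} X$ as $\rho|_{\overline{X}}$ and $\rho^3|_{\overline{X}}$ using $\Aut(X)(\C) = \{1,\rho^2|_X\}$, check that each composed with its conjugate equals $\rho^2|_X \neq 1$, and conclude by the necessity part of the Weil cocycle criterion (Theorem \ref{thm:Weil}). Your extra bookkeeping (verifying $\rho(X)=\overline{X}$ and hence that $\rho^2|_X$ is an automorphism, and noting that the marked-curve isomorphisms are among the same two maps) only makes explicit what the paper leaves implicit.
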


\begin{proof}
  By hypothesis, the map $\phi = \rho|_{\overline{X}}$ gives an isomorphism
  $\phi : \overline{X} \xrightarrow{\sim} X$, and $\phi(P) = \rho(\overline{P})
  = \overline{P} = P$ since $P \in W(\R)$ and $\rho$ is defined over $\R$.  We
  have $\overline{\phi} \phi = (\rho^2)|_{\overline{X}} \neq 1$ on both $W$ and
  $X$. Therefore $\phi$ does not give rise to a descent of $X$ to $\R$. Neither
  does $\phi' = \rho^3|_{\overline{X}}$, since again it similarly satisfies
  $\overline{\phi}'\phi' = (\rho^2)|_{\overline{X}} \neq 1$.  The maps
  $\phi,\phi'$ are the only isomorphisms $\overline{X} \to X$ since any two
  would differ by an element of $\Aut(X)(\C)=\{1,\rho^2|_X\}$, so the result
  follows by the necessity part of the Weil coycle criterion (Theorem
  \ref{thm:Weil}).
\end{proof}


To give an explicit example of Lemma \ref{lem:descent_notcurve}, we take
$W=\PP^2$ and the automorphism
\begin{align}
  \begin{split}
    \rho:\PP^2 &\to \PP^2 \\
    (x:y:z) &\to (y:-x:z).
  \end{split}
\end{align}

\begin{lem} \label{lem:rhofixed1}
  We have
  \begin{equation}
    \{P \in \PP^2(\C) : \rho(P)=P \} = \{ (0:0:1) , (\pm i:1:0) \}.
  \end{equation}
\end{lem}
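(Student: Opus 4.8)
The plan is to compute the fixed points of $\rho$ directly by solving the fixed-point equations in $\PP^2$. The map $\rho$ sends $(x:y:z)$ to $(y:-x:z)$, so a point is fixed precisely when $(y:-x:z) = (x:y:z)$ in projective coordinates, i.e.\ when there exists a scalar $\lambda \in \C^{\times}$ with $(y,-x,z) = (\lambda x, \lambda y, \lambda z)$. First I would split into the two cases $z \neq 0$ and $z = 0$, which correspond to affine and line-at-infinity behavior respectively.

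When $z \neq 0$, we may normalize $z = 1$, and the third coordinate equation $z = \lambda z$ forces $\lambda = 1$. The remaining equations $y = x$ and $-x = y$ then give $x = -x$, hence $x = 0$ and $y = 0$, yielding the single fixed point $(0:0:1)$. When $z = 0$, the equations reduce to $y = \lambda x$ and $-x = \lambda y$, so substituting gives $-x = \lambda^2 x$; since $(x:y)$ is a genuine point of $\PP^1$ we cannot have $x = 0$ (that would force $y = 0$ too), so $\lambda^2 = -1$ and $\lambda = \pm i$. Taking $x = 1$ yields $y = \lambda = \pm i$, producing the two points $(1 : i : 0)$ and $(1 : -i : 0)$, which are the same as $(\mp i : 1 : 0)$ up to scaling, matching the stated $(\pm i : 1 : 0)$.

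\begin{proof}
  A point $(x:y:z) \in \PP^2(\C)$ is fixed by $\rho$ if and only if there is a
  scalar $\lambda \in \C^{\times}$ with $(y, -x, z) = (\lambda x, \lambda y,
  \lambda z)$.

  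If $z \neq 0$, the equation $z = \lambda z$ forces $\lambda = 1$; then $y = x$
  and $-x = y$ together give $x = 0$ and hence $y = 0$, so the only such point
  is $(0:0:1)$.

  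If $z = 0$, then $x$ and $y$ cannot both vanish, and the equations $y =
  \lambda x$ and $-x = \lambda y$ yield $-x = \lambda^2 x$. Since a vanishing
  $x$ would force $y = 0$ as well, we must have $x \neq 0$ and therefore
  $\lambda^2 = -1$, so $\lambda = \pm i$. Normalizing $y = 1$ gives $x = \mp i$,
  producing the two points $(\pm i : 1 : 0)$.

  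Combining both cases, the set of fixed points is exactly $\{ (0:0:1), (\pm i :
  1 : 0) \}$.
\end{proof}

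There is essentially no obstacle here: the statement is a routine finite computation, and the only point requiring a moment's care is the bookkeeping of the projective scaling factor $\lambda$, since fixedness in $\PP^2$ means equality up to scalar rather than on the nose. The slight subtlety is the case split on whether $z$ vanishes, because the scalar $\lambda$ is pinned to $1$ on the affine chart but is free to be a square root of $-1$ on the line at infinity, which is precisely what produces the two extra fixed points there.
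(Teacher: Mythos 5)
Your proof is correct and takes essentially the same approach as the paper: a case split between the affine chart $z \neq 0$ (where the projective scalar is forced to equal $1$, giving only $(0:0:1)$) and the line at infinity $z=0$ (where the scalar may be $\pm i$, producing $(\pm i:1:0)$). Your write-up is merely a bit more explicit about tracking the scalar $\lambda$ than the paper's terse two-line argument.
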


\begin{proof}
  In the affine open where $z = 1$ we get the equations $y = x = -x$, whose
  unique solution is given by $x = y = 0$. On the other hand, when $z = 0$ we
  find $(x : y) = (y : -x) \in \PP^1(\C)$. This implies $x = iy$, and the
  result follows.
\end{proof}

By Lemma \ref{lem:rhofixed1}, the only point $P \in \PP^2(\R)$ with $\rho(P)=P$
is $P=(0:0:1)$.  Our curve $X \subseteq \PP^2$ is a projective plane curve,
defined by a homogeneous polynomial $h(x,y,z) \in \R[x,y,z]$ with $h(0:0:1)=0$.
The condition that $\rho(\overline{X}) = X$ is equivalent to the condition that
$h(y:-x:z)$ is a scalar multiple of $\overline{h}(x:y:z)$; for simplicity, we
assume that
\begin{equation*}
  h(y:-x:z)=\overline{h}(x:y:z).
\end{equation*}
The condition that $\rho^2|_X \in \Aut(X)(\C)$ implies that
\begin{equation*}
  h(x:y:-z)=h(x:y:z).
\end{equation*}

We take $\deg h=4$ so that $\Aut(X)(\C)$ is finite.  The above conditions then
imply that
\begin{equation} \label{eqn:hxyzdef}
  h(x,y,z) = ax^4 + \overline{a}y^4 + (bx^2 - \overline{b}y^2) x y + (cx^2 +
  \overline{c}y^2)z^2 + rx^2y^2 + sixyz^2
\end{equation}
with $a,b,c \in \C$ and $r,s \in \R$.  We observe that $P = (0:0:1)$ is a
singular point of $X$ and $P$ is a nodal double point if and only if $s^2 \neq
-4|c|^2$.

To apply Lemma \ref{lem:descent_notcurve}, we need to select coefficients of
$h$ such that $X$ has no automorphisms beyond $\rho^2|_X$; we will show this is
true for a particular choice, which will then in fact imply that the resulting
computations are true for a general choice, i.e., for the curve defined by
equation \eqref{eqn:hxyzdef} over $\C[a,b,c,r,s,\Delta^{-1}]$ where $\Delta$ is
the discriminant of $h$.  We consider the curve with
\begin{equation*}
  (a,b,c,r,s)=(0,i,1+i,1,2)
\end{equation*}
so that $X$ is described by the homogeneous equation
\begin{equation} \label{eqn:hex1_eq}
  X : i (x^2 + y^2) x y + ((1 + i) x^2 + (1 - i) y^2) z^2 + x^2 y^2 + 2 i x y
  z^2 = 0 .
\end{equation}
Standard techniques in computational algebraic geometry (we performed our
computation in \textsc{Magma} \cite{Magma}) show that $P$ is the only singular
point of $X$, so that $X$ has geometric genus $2$. We verify that $\Aut(X)(\C)
\simeq \Z/2\Z$ by computing that the normalization of $X$ over $\R$ is given by
\begin{equation}\label{eqn:xex1genus2cv}
  Y : y^2 = (i+1)x^5 + (-i - 1)x^4 + 4x^3 + (-i + 1)x^2 + (-i + 1)x
\end{equation}
and by a computation using invariant theory (again a computation in
\textsc{Magma} \cite{ANTS}) we see that indeed $Y$ has automorphism group $\Z /
2 \Z$ generated by the hyperelliptic involution. Since every automorphism group
of a singular curve lifts to its normalization, we see that indeed $\Aut (X)
(\C) \simeq \Z / 2 \Z$. Lemma \ref{lem:descent_notcurve} therefore applies to
show that \ref{eqn:hex1_eq} is an explicit counterexample.

Alternatively, one can compute that the Igusa--Clebsch invariants of $Y$ are
defined over $\R$, so that the field of moduli is $\R$, but that there is an
obstruction to the curve $Y$ being defined over $\R$, as described by Mestre
\cite{Mestre}. On the other hand, one shows that the existence of a descent of
$Y$ is equivalent with that for $X$ since $P$ is the unique singular point of
$X$.


\begin{rmk}\label{rmk:ShimEarle}
  With the equation \eqref{eqn:xex1genus2cv} in hand, one sees how to recover
  this class of examples in a different way, as follows.  Let $W_0$ be the
  conic defined by $x^2+y^2+z^2=0$ in $\PP^2$ over $\R$.  Then $W_0(\R) =
  \emptyset$.  Let $S \subseteq W_0(\C)$ be a subset of $6$ distinct points
  over $\C$ forming $3$ complex conjugate pairs.  Then there exists a smooth
  hyperelliptic curve $X'$ over $\C$ branched over the set $S$ that does not
  descend to $\R$: this follows from work of Mestre \cite{Mestre}, but it is
  also a rephrasing of the results by Shimura and Earle reviewed by Huggins
  \cite[Chapter 5]{Huggins}.

  Now let $Q_1,Q_2 \in S$ be such that $Q_2 = \overline{Q_1}$ and let $P_1,P_2
  \in X'(\C)$ be their ramified preimages. By the cocycle condition, the points
  $\overline{P_1}, \overline{P_2} \in \overline{X'} (\C)$ are mapped to
  $P_1,P_2 \in X' (\C)$ under the isomorphism between $\overline{X'}$ and $X'$.
  Now pinch together the points $P_1,P_2$ to obtain a curve $X$ over $\C$ with
  a double point $P$.  Then $\overline{P} \in \overline{X} (\C)$ is mapped to
  its conjugate $P$ on $X$ (``itself'' on $W_0$) by construction, and so
  $(X;P)$ also has field of moduli $\R$.
\end{rmk}

To obtain a map $f:X \to \PP^1$ unramified away from $\{0,1,\infty\}$, thereby
completing the first proof of Theorem
B, we first choose a function $\pi$ on $W$ defined over $\R$ that is invariant
under $\rho$ and that is nonconstant on $X$.  This yields a morphism $\pi|_X:X
\to \PP^1$, and because $\rho$ maps $X$ to $\overline{X}$ and $\pi$ is
invariant under $\rho$, the branch locus of $\pi|_X$ is invariant under complex
conjugation.

In the specific example \eqref{eqn:hex1_eq} above, we take $\pi = x^2 + y^2$
and compute that the branch locus is described by the vanishing of the
homogeneous polynomial
\begin{equation}
  3072u^7v + 4352u^6v^2 + 5840u^5v^3 + 3424u^4v^4 + 920u^3v^5 + 104u^2v^6 +
  5uv^7,
\end{equation}
which shows that it is in fact even stable under $\Gal(\Qbar \ext \Q)$.

We can now apply the following slight strengthening of the result of \Belyi.

\begin{thm}[\Belyi\ \cite{Belyi2}]\label{thm:BelyiStrong}
  Let $g : X \to \PP^1$ be a map of curves over $\Qbar$ whose branch locus in
  $\PP^1(\Qbar)$ is defined over a number field $F \subset \Qbar$. Then there
  exists a morphism $\alpha : \PP^1 \to \PP^1$ defined over $F$ such that $f =
  \alpha \circ g$ is a \Belyi\ map.
\end{thm}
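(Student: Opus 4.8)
The plan is to run the classical two-stage argument of \Belyi, keeping careful track of fields of definition throughout; the crucial observation will be that every auxiliary map one post-composes with $g$ can be taken over $\Q \subseteq F$, so that the resulting $\alpha$ is automatically defined over $F$. Write $S \subset \PP^1(\Qbar)$ for the branch locus of $g$, which by hypothesis is a finite $\Gal(\Qbar \ext F)$-stable set.

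First I would reduce to the case where the branch locus is $\Q$-rational. Let $T = S \setminus \{\infty\}$ and set $d = \max_{t \in T} \deg_\Q t$. If $d \geq 2$, choose $t_0 \in T$ of degree $d$, let $m \in \Q[x]$ be its minimal polynomial, and replace $g$ by $m \circ g$. The branch locus of $m \circ g$ is contained in $m(S) \cup \{\text{critical values of } m\}$. Since $\deg m' = d-1$, the critical values of $m$ have degree at most $d-1$, the Galois orbit of $t_0$ is carried to $0$, and every point $t \in T$ satisfies $\deg_\Q m(t) \leq \deg_\Q t$. Tracking the lexicographic complexity $(d, \#\{\text{degree-}d \text{ Galois orbits in the branch locus}\})$, this step strictly decreases the complexity: the orbit of $t_0$ leaves degree $d$, while every degree-$d$ orbit in the new branch locus is the image of a degree-$d$ orbit already present, so their number cannot increase. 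Iterating, after finitely many steps (each given by a polynomial over $\Q$) the branch locus lies in $\PP^1(\Q)$.

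Next I would reduce a $\Q$-rational branch locus to $\{0,1,\infty\}$. After a M\"obius transformation over $\Q$ I may assume the branch locus is $\{0,1,\infty\} \cup \{\lambda_1,\dots,\lambda_r\}$ with each $\lambda_i \in \Q \setminus \{0,1,\infty\}$. If $r \geq 1$, then after a further $\Q$-rational M\"obius move I may write $\lambda_1 = m/(m+n)$ with $m,n$ coprime positive integers and compose with
\[
  \gamma(x) = \frac{(m+n)^{m+n}}{m^m n^n}\, x^m (1-x)^n ,
\]
a map over $\Q$ whose only critical values are $0,1,\infty$ and which sends each of $0,1,\infty,\lambda_1$ into $\{0,1,\infty\}$. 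This strictly decreases the number of branch points outside $\{0,1,\infty\}$, so after finitely many such steps the branch locus is exactly $\{0,1,\infty\}$.

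Finally, I would assemble $\alpha$ as the composite of all the auxiliary maps above. Each factor is defined over $\Q$, hence $\alpha$ is defined over $\Q \subseteq F$, and by construction the branch locus of $f = \alpha \circ g$ lies in $\{0,1,\infty\}$, so $f$ is a \Belyi\ map. (If one prefers to use the hypothesis on $F$ directly, the first stage can instead be run with minimal polynomials over $F$, landing the branch locus in $\PP^1(F)$; but the $\gamma$-reduction of the second stage requires $\Q$-rational points, so routing the first stage through $\Q$ is the cleaner option and already yields $\alpha$ over $\Q$.) The main obstacle is the termination of the first stage: one must exhibit a complexity invariant that provably drops under composition with a minimal polynomial, verifying that the critical values introduced at each step are of strictly smaller degree and therefore do not reintroduce branch points of maximal degree.
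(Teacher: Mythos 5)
Your proposal is correct, but note that the paper itself contains no proof to compare against: Theorem \ref{thm:BelyiStrong} is stated with a citation to \Belyi's second proof of the three-point theorem \cite{Belyi2} and is never proved internally. What you give is a complete version of the classical two-stage argument, and it holds up: in the first stage, because the minimal polynomial $m$ is $\Q$-rational it carries Galois orbits to Galois orbits, the critical values it introduces have degree at most $d-1$, and the orbit of $t_0$ is collapsed to $0$, so your lexicographic complexity strictly drops and the stage terminates; in the second stage the only implicit point is that your ``further $\Q$-rational M\"obius move'' should be taken among the six transformations permuting $\{0,1,\infty\}$, which indeed always places a rational $\lambda_1 \notin \{0,1,\infty\}$ into $(0,1)$. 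Two comparisons are worth recording. First, your argument proves something stronger than the stated theorem: the hypothesis that the branch locus is $F$-rational is never used, and $\alpha$ comes out defined over $\Q$ for an arbitrary finite branch locus in $\PP^1(\Qbar)$; this strengthening is precisely why the paper can later assert that the theorem ``applies equally well to the extension $\C \ext \R$''. Second, the proof cited by the paper \cite{Belyi2} differs from yours in the second stage: instead of removing one rational branch point at a time, \Belyi\ composes with the single function $\beta(x) = \prod_i (x - \lambda_i)^{n_i}$, where the nonzero integers $n_i$ are chosen proportional to $1/\prod_{j \neq i}(\lambda_i - \lambda_j)$ by Lagrange interpolation, so that $\beta'/\beta = N/\prod_i (x - \lambda_i)$ for a nonzero constant $N$; then every finite critical point of $\beta$ lies among the $\lambda_i$, each $\lambda_i$ maps to $0$ or $\infty$, and $\infty$ maps to $1$, collapsing everything into $\{0,1,\infty\}$ in one step and avoiding the termination bookkeeping that your inductive version requires (and which you carry out correctly).
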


Theorem \ref{thm:BelyiStrong} applies equally well to the extension $\C \ext
\R$, essentially since every \Belyi\ map is already defined over $\Qbar$. We
apply the above for $g = \pi$, whose branch locus is $\R$-rational, and let $f
= \alpha \pi : X \to \PP^1$ be the map thus obtained, unramified away from
$\{0,1,\infty\}$. Now $f$ is $\rho$-invariant since $\pi$ is, and because we
have $\overline{\pi} = \pi \rho = \pi$ on $W$ we also have
\begin{equation}
  \overline{f} \rho = \overline{\alpha \pi} \rho = \alpha \overline{\pi} \rho =
  \alpha \pi \rho^2 = \alpha \pi = f.
\end{equation}

So indeed the field of moduli of the pointed map $(X;f:X \to \PP^1; P;
0,1,\infty)$ with respect to the extension $\C \ext \R$ equals $\R$. However,
because $X$ does not descend to $\R$, neither does $(X;f;P;0,1,\infty)$.

\subsection{Second example: the curve descends}

In this second example, the curve itself descends but the marked curve does
not. The setup is described by the following lemma.

\begin{lem}\label{lem:rigprob}
  Let $X$ be a curve over $\R$ such that
  \begin{equation}
    \Aut (X) (\C) = \Aut (X) (\R) = \langle \rho \rangle \simeq \Z/4\Z  .
  \end{equation}
  Suppose that $P \in X(\C) \setminus X(\R)$ satisfies $\overline{P} =
  \rho(P)$. Then $(X_{\C} ; P)$ has field of moduli $\R$ with respect to the
  extension $\C \!\mid\! \R$ but does not descend to $\R$.
\end{lem}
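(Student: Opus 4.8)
The plan is to apply the Weil cocycle criterion for the extension $\C \ext \R$ in the concrete form \eqref{eq:WeilCocR}, so that everything reduces to understanding a single isomorphism $\phi : \overline{X_{\C}} \xrightarrow{\sim} X_{\C}$ of marked curves and testing whether $\phi\overline{\phi} = 1$ can be arranged. First I would check that the field of moduli of $(X_{\C}; P)$ is $\R$. Since $X$ is already defined over $\R$ we have $\overline{X_{\C}} = X_{\C}$, so the conjugate marked curve is simply $(X_{\C}; \overline{P})$, and an isomorphism to $(X_{\C}; P)$ is an automorphism $\phi \in \Aut(X)(\C) = \langle \rho \rangle$ with $\phi(\overline{P}) = P$. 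Using the hypothesis $\overline{P} = \rho(P)$, the choice $\phi = \rho^{-1} = \rho^3$ gives $\rho^3(\rho(P)) = P$, so such an isomorphism exists and the field of moduli is $\R$.

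The crucial observation I would record next is that $\rho^2$ fixes $P$. Indeed, because $\rho$ is defined over $\R$, complex conjugation commutes with $\rho$, so applying conjugation to $\overline{P} = \rho(P)$ yields
\[
  P = \overline{\overline{P}} = \overline{\rho(P)} = \rho(\overline{P}) = \rho^2(P).
\]
Since $P \notin X(\R)$ forces $\rho(P) = \overline{P} \neq P$, the stabilizer of $P$ in $\langle \rho \rangle$ is exactly $\langle \rho^2 \rangle \simeq \Z/2\Z$; that is, $\Aut(X_{\C}; P)(\C) = \langle \rho^2 \rangle$. Consequently the isomorphisms $\overline{X_{\C}} \xrightarrow{\sim} X_{\C}$ respecting the marking form a torsor under $\langle \rho^2 \rangle$, and a direct check of the four powers of $\rho$ shows that this set is precisely $\{\rho, \rho^3\}$ (both send $\overline{P} = \rho(P)$ to $P$, whereas $1$ and $\rho^2$ do not).

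Finally I would test the cocycle relation on these two candidates. Because $\rho$, hence every power of $\rho$, lies in $\Aut(X)(\R)$, conjugation fixes each candidate, so $\overline{\phi} = \phi$ and therefore $\phi\overline{\phi} = \phi^2$. For both $\phi = \rho$ and $\phi = \rho^3$ this gives $\phi^2 = \rho^2 \neq 1$. By the necessity direction of the Weil cocycle criterion (Theorem \ref{thm:Weil}), no descent of $(X_{\C}; P)$ to $\R$ exists. The heart of the argument, and the step I expect to carry the real content, is the identity $\rho^2(P) = P$: it is exactly this that collapses the available marking-preserving isomorphisms into a single coset of $\langle \rho^2 \rangle$, all of whose elements fail $\phi\overline{\phi} = 1$ precisely because $\rho$ is $\R$-rational. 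Everything else is routine bookkeeping with the order-$4$ group.
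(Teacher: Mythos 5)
Your proposal is correct and follows essentially the same argument as the paper: both hinge on the identity $P = \rho^2(P)$ (obtained by conjugating $\overline{P} = \rho(P)$ using the $\R$-rationality of $\rho$), and both then play off the two constraints on a candidate $\phi \in \langle \rho \rangle$ — preserving the marking versus satisfying $\phi\overline{\phi} = \phi^2 = 1$ — to see that no element of $\Z/4\Z$ meets both. The only cosmetic difference is the order of elimination: the paper first restricts to the involutions $\{1, \rho^2\}$ via the cocycle condition and then shows they move the marked point, while you first restrict to the marking-preserving maps $\{\rho, \rho^3\}$ and then show they fail the cocycle condition.
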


\begin{proof}
  The field of moduli claim follows by taking $\phi = \rho$ to be the identity
  map, since $\overline{X} = X$ and $\rho (\overline{P}) = P$ as $\rho$ is
  defined over $\R$. Moreover, we have
  \begin{equation} \label{eqn:33}
    P = \overline{\overline{P}} = \overline{\rho(P)} = \rho(\overline{P}) =
    \rho^2(P)  .
  \end{equation}
  As in the proof of Lemma \ref{lem:descent_notcurve}, we show that no element
  $\omega \in \Aut(X)(\C) = \langle \rho \rangle$ gives a descent.  The cocycle
  condition implies $\omega \overline{\omega} = \omega^2 = 1$, so $\omega$ is
  an involution.  And yet by \eqref{eqn:33} and the assumption that
  $\overline{P} \neq P$ implies that neither the identity nor $\rho^2$ gives a
  descent of $(X;P)$, since they do not give an isomorphism to
  $(X;\overline{P})$.
  Therefore no descent of $(X;P)$ exists.
\end{proof}

\begin{rmk} \label{rmk:nowtheresanobstruction}
  Admitting for a moment that curves as in Lemma \ref{lem:rigprob} do exist, we
  see that rigidifying by marking a point may very well lead to a descent
  problem where previously none existed: while both $(X_{\C},P)$ and \emph{a
  fortiori} $X_{\C}$ have field of moduli $\R$ with respect to the extension
  $\C \!\mid\! \R$, the obstruction vanishes for the non-marked curve $X_{\C}$,
  whereas it is non-trivial for the pair $(X_{\C},P)$. So while non-marked
  curves $X_{\C}$ may not descend, as was noted by Birch \cite{Birch}, neither
  may a given rigidification of $X_{\C}$, as we saw in the last subsection, and
  in fact it may even complicate descent matters further when the automorphism
  group remains nontrivial.
%
\end{rmk}

To exhibit an explicit curve meeting the requirements of Lemma
\ref{lem:rigprob}, we first construct a smooth curve $Y$ defined over $\R$ with
\begin{equation*}
  \Aut(Y) (\R) = \Aut(Y) (\C) \simeq \Z/4\Z.
\end{equation*}
To do this, we choose distinct $x_1, x_2 \in \C$ and let
\begin{equation}
  p (x) = (x^2 + 1) \prod_{i = 1}^2 (x - x_i) (x - \overline{x}_i) (x + 1/x_i)
  (x + 1/\overline{x}_i) \in \R[x].
\end{equation}
We see that $p(-1/x)=p(x)/x^{10}$. One then expects that generically the
hyperelliptic curve
\begin{equation*}
  Y : y^2 = p(x)
\end{equation*}
will have automorphism group generated by $\rho(x,y)=(-1/x, y/x^5)$. It is
possible to verify \cite{ANTS} that this indeed happens for the choice $x_1 = 1
+ i , x_2 = 2 + i$, which (after scaling) yields the genus $4$ hyperelliptic
curve
\begin{equation}
  Y : y^2 = 10x^{10} - 42x^9 + 67x^8 - 36x^7 + 23x^6 + 23x^4 + 36x^3 + 67x^2 +
  42x + 10.
\end{equation}

Consider the point $Q = (1 + i,0) \in Y(\C)$. To get Lemma \ref{lem:rigprob} to
apply, we need $\overline{Q}=\rho (Q)$. Of course this cannot be the case on
$Y$ in light of Theorem \ref{thm:BirchCurve}, but we can make it true by
construction if we pinch together the points $\overline{Q}$ and $\rho(Q)$.
Moreover, if we take care to pinch $Q$ with $\rho (\overline{Q})$ similarly,
then the resulting contraction morphism $c : Y \to X$ will be defined over
$\R$. If we let $P = c(Q)$, then by construction, the pair $(X;P)$ will satisfy
the conditions of Lemma \ref{lem:rigprob}.  Indeed, any automorphism of a
singular curve lifts to its normalization, and conversely the automorphisms of
$Y$ all transfer to $X$ because of our construction of the contraction, so that
indeed
\begin{equation*}
  \Aut(X)(\R)=\Aut(X)(\C) \simeq \Aut (Y)(\C) \simeq \Z / 4 \Z ;
\end{equation*}
the stabilizer of $P$ again corresponds to the subgroup of $\Z / 4 \Z$ of order
$2$.

A contraction morphism $c$ can be constructed in many different ways, none of
which is particularly pleasing from the point of view of finding an equation.
The first approach is to find a sufficiently ample linear system and extract
the sublinear system of sections whose values at $\overline{Q}$ and $\rho (Q)$
(resp.\ $Q$ and $\rho (\overline{Q})$) coincide. This leads to ambient spaces
that are too large to give rise to attractive equations.

An alternative approach is as follows. Define
\begin{equation}
  \begin{split}
    q(x) & = (x - x_1)(x - \overline{x}_1) (x + 1/x_1)(x + 1/\overline{x}_1) =
    x^4 - x^3 + \frac{1}{2} x^2 + x + 1, \\
    r(x) & = \frac{2}{3} x^3 - 2 x^2 + x.
  \end{split}
\end{equation}
and let $c : \AA^2 \to \AA^4$ be given by
\begin{equation}
  c(x,y) = (q(x), x q(x), r(x), y) = (t,u,v,w).
\end{equation}
Given a point in the image of $Y$ under $c$, we can always read off the
original $y$-coordinate from $w$. We can also recover the $x$-coordinate $u /
t$ as long as $t \neq 0$. The latter only occurs if $x$ is a root of $q$; these
roots all have the property that $t = u = w = 0$. On the other hand, one
verifies that $r$ assumes exactly two distinct values on this set of roots;
moreover, the preimage of one of these values is given by $\{ Q,
\rho(\overline{Q}) \}$, and the other by $\{ \overline{Q}, \rho (Q) \}$. We
see that the morphism $c$ constracts the pairs $\{ Q , \rho(\overline{Q}) \}$
and $\{ \overline{Q}, \rho (Q) \}$ exactly in the way that we wanted.

This method gives only an affine open of $X$, but this open contains all the
singular points that we are interested in, so that completing the corresponding
model smoothly at infinity will give the desired model of $X$. On the patch $t
\neq 0$ we can describe the image of $c$ by the following equations:
\begin{equation}
  \begin{aligned}
    t^5 - t^4 - t^3 u - 1/2 t^2 u^2 + t u^3 - u^4 &= 0, \\
    t^3 v - t^2 u + 2 t u^2 - 2/3 u^3 &= 0, \\
    10t^{10} w^2 - 10t^{10} - 42 t^9 u - 67 t^8 u^2 - 36 t^7 u^3 - 23 t^6
      u^4 &  \\
      - 23 t^4 u^6 + 36 t^3 u^7 - 67 t^2 u^8 + 42 t u^9 - 10u^{10}
      & = 0.
  \end{aligned}
\end{equation}

Adding a few more coordinates, we can also describe the automorphism $\rho$:
using the contraction
\begin{equation}
  \begin{split}
    c (x,y) & = (q(x), x q(x), r(x), y, q(-1/x), -1/x q(-1/x), r(-1/x), y/x^5)
    \\      & = (t,u,v,w,t',u',v',w'),
  \end{split}
\end{equation}
it can be described by
\begin{equation}
  \rho (t,u,v,w,t',u',v',w') = (t',u',v',w',t,u,v,-w).
\end{equation}

In both cases, the full ideal of the image of $c$ can be recovered by using
Gr\"obner bases.

To conclude our argument; by Theorem \ref{thm:BelyiStrong}, we can find a map
$f:X \to \PP^1$ unramified outside $\{0,1,\infty\}$ such that $(X,f;P)$ has the
same automorphism group: one takes any function $\phi$ defined over $\R$ that
is invariant under $\Aut (X,P)$ and postcompose with a function $h$ following
\Belyi\ to get a map $\phi = h \phi_0$ with the same field of moduli and the
same obstruction.

\begin{rmk}
  The curve in the first counterexample had exactly one singular point---in a
  sense, it was ``marked'' anyway.  By contrast, the curve in this subsection
  has two singular points, which makes it important to keep track of the chosen
  marking.
\end{rmk}

\begin{rmk}
  There should be many more ways to construct singular curves over $\R$ for
  whom the field of moduli is no longer a field of definition after
  rigidification by a singular point.

  For example, if we take the automorphism $\rho:\PP^3 \to \PP^3$ given by the
  cyclic permutation of the coordinates $\rho(x:y:z:w)=(y:z:w:x)$ and considers
  the generic complete intersection $X$ of two polynomials invariant under
  $\rho$ containing the point $P=(1:i:1:i)$ with $\rho(P)=\overline{P}$, then
  we expect that the curve $X$ will satisfy the hypotheses above.  For example,
  if we define
  \begin{equation}
    \rho(m)=m+\rho^*(m)+(\rho^{*2})(m)+(\rho^{*3})(m)
  \end{equation}
  for $m \in k[x,y,z,w]$ a monomial, then we believe that the curve $X$ of
  genus $7$ defined by
  \begin{equation}
  \begin{gathered}
      \rho(x^2)=x^2 + y^2 + z^2 + w^2 = 0 \\
      5 \rho(x^4) + \rho(x^3y) + \rho(x^3z) -4\rho(x^2yw) -2 \rho(x^2zw) +
      3\rho(x^2y^2) + 7\rho(xyzw) = 0
  \end{gathered}
  \end{equation}
  has this property, as we can verify that $\Aut(X)(\F_p) \simeq \Z/4\Z$ for a
  number of large primes $p$; however, it is much more involved to provably
  compute the automorphism group for such a curve, which is why we have
  preferred to stick with this admittedly more special case.
\end{rmk}


\appendix 

\section{Descent of marked Galois \Belyi\ maps in genus zero}\label{sec:XPf}

In this appendix, we show how branches can be used to provide explicit descent
of marked Galois \Belyi\ maps in genus zero.  The results are classical, but
the method to derive them is quite pleasing so we provide it here.

In the absence of unipotent automorphisms, a (geometrically generically) Galois
map ${W \to V}$ between curves of genus $0$ over $\Fbar$ is always a quotient
by one of the groups $C_n, D_n, A_4, S_4, A_5$ for $n \geq 2$. Because we want
to use the language of \Belyi\ maps, throughout this appendix we suppose
$\opchar F = 0$; the results hold equally well when $\#G$ is coprime to
$\opchar F$, but we are reluctant to use the term \Belyi\ maps in this case.
The quotients mentioned previously are then all \Belyi\ maps over $\Fbar$, and
in what follows we wish to consider some special descents of these maps. This
will expand on the results in Couveignes--Granboulan
\cite{CouveignesGranboulan}, in which forms of these \Belyi\ maps were already
given.  

\subsection{Descent of lax \Belyi\ maps}

We will slightly broaden our notion of \Belyi\ maps over $F$ in order to allow
their branch divisor to be an arbitrary $F$-rational divisor instead of merely
consisting of multiples of points individually defined over $F$, which is to
say want to allow the situation where the points on $\PP^1$ are not marked.

\begin{defn}\label{def:GenBelyi}
  A \defi{lax \Belyi\ map} over $F$ is a map $f : X \to \PP^1$ of curves
  over $F$ that is ramified above at most three points. A \defi{marked lax
  \Belyi\ map} over $F$ is a pair $(X;f;P)$, where $f:X \to \PP^1$ is a lax
  \Belyi\ map over $F$ and $P \in X(F)$ is a point of $X$ over $F$.
\end{defn}

By Birch's theorem, any marked lax \Belyi\  map descends to its field of
moduli, and the method of branches gives a descent of marked lax \Belyi\  maps. 

However, it turns out that if we consider the case where $X$ has genus $0$,
then things are much easier, essentially because we have an obvious descent
$(\PP^1;\infty)$ of $(X;P)$ that we can exploit. This leads to a useful and
easily memorizable trick that resembles our approach to hyperelliptic curves in
the previous section.

In what follows, we suppose that $f:X \to \PP^1$ is a lax \Belyi\  map with $X$
of genus $0$ such that $f$ is (geometrically generically) Galois with field of
moduli $F$ with respect to the extension $\Fbar \ext F$. We suppose $P$ is a
ramification point of $f$ of \emph{non-trivial} order $n \in \Z_{\geq 2}$,
which we suppose maps to $0 \in \PP^1$ for the sake of simplicity. Our
reasoning will now be very similar to that in section \ref{subsec:hypcase}.

After applying an automorphism if necessary, there exists a model $\PP^1 \simeq
X$ such that:
\begin{itemize}
  \item $P=\infty=(1:0) \in \PP^1$, and
  \item The automorphism group of $(X,f;P)$ is generated by $(x:z) \mapsto
    (\zeta_n x:z)$ with $\zeta_n$ a primitive $n$th root of unity.
\end{itemize}
This means that if we let $t$ be the affine coordinate $z/x$ the function $f$
can be identified with a power series expansion
\begin{equation}
  f (t) = \sum_{i = 1}^{\infty} c_i t^{i n} .
\end{equation}

Since $\Aut(X,f;P)(\Fbar) \simeq \Z/n\Z$, the set $\{ i \geq 1 : c_i \neq 0 \}$
generates $\Z$ as a group. So there exists a finite subset $I$ of this set with
this property as well. Let $c_I = ( c_i )_{i \in I}$ be the corresponding point
in the projective space weighted by its indices, and choose a normalized
representative $c_{I,0}$ of $c_I$ \cite{LR}. There exists a scaling $t \mapsto
\alpha t$ of $t$ that transforms $c_I$ into $c_{I,0}$, which is uniquely
determined up to a power of the automorphism $t \mapsto \zeta_n t$ since the
elements in $I$ generate $\Z$.

\begin{prop}\label{prop:descg0}
  With notations as above, let
  \begin{equation}
    f_0 (t) = f (\alpha t) = \sum_{i = 1}^{\infty} c_{i,0} t^{i n} .
  \end{equation}
  Then $f_0$ has coefficients in $F$ and $(X,f_0; \infty)$ is a descent of
  $(X,f;P)$.
\end{prop}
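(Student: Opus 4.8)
The plan is to prove two things: that the rational function $f_0$ has coefficients in $F$, and that the resulting triple $(\PP^1, f_0; \infty)$ — which is automatically defined over $F$ once $f_0$ is — becomes isomorphic to $(X,f;P)$ over $\Fbar$. The second point is immediate from the construction: the scaling $\psi \colon t \mapsto \alpha t$ is an automorphism of $\PP^1 \simeq X$ fixing the marked point $\infty$ and conjugating the automorphism group $\langle \zeta_n \rangle$ to itself, and since $f_0 = f \circ \psi$ by definition, the pair $(\psi, \mathrm{id})$ exhibits an isomorphism $(\PP^1, f_0; \infty) \xrightarrow{\sim} (X,f;P)$ over $\Fbar$ acting trivially on the target. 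In particular $(\PP^1, f_0; \infty)$ has field of moduli $F$ as well. Everything therefore reduces to showing that $f_0 \in F(t)$; since $f_0$ is determined by its expansion $\sum_i c_{i,0} t^{in}$, this is the same as proving $\lexp{\sigma}{f_0} = f_0$, equivalently $\lexp{\sigma}{c_{i,0}} = c_{i,0}$ for every $i$ and every $\sigma \in \Gamma_F$.

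First I would exploit the uniqueness of the normalized representative, exactly as in the hyperelliptic case of Section~\ref{subsec:hypcase}. Fix $\sigma \in \Gamma_F$. Conjugating coefficients gives $\lexp{\sigma}{f_0}(t) = \sum_i \lexp{\sigma}{c_{i,0}} t^{in}$, and because the normalization procedure of \cite{LR} is defined by Galois-equivariant conditions, the tuple $\lexp{\sigma}{c_{I,0}}$ is again a normalized representative, namely of the weighted point $\lexp{\sigma}{c_I}$. Thus, if I can show that $\lexp{\sigma}{c_I}$ and $c_I$ define the \emph{same} point of weighted projective space, uniqueness forces $\lexp{\sigma}{c_{I,0}} = c_{I,0}$. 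The generating hypothesis on $\{i : c_i \neq 0\}$ then pins down the relevant scaling up to $\langle \zeta_n \rangle$, which promotes equality on the indices in $I$ to the equality $\lexp{\sigma}{c_{i,0}} = c_{i,0}$ for all $i$, i.e. $\lexp{\sigma}{f_0} = f_0$, as desired.

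It remains to produce this equality of weighted points from the field of moduli hypothesis. Since $F$ is the field of moduli of $(X,f;P)$, there is an isomorphism $\lexp{\sigma}{(X,f;P)} \xrightarrow{\sim} (X,f;P)$ of marked lax \Belyi\ maps. I would analyze its source component: it fixes $P$ and conjugates $\Aut(X,f;P) = \langle \zeta_n \rangle$ to itself, hence normalizes the scaling torus while fixing $P$ and is therefore a scaling $t \mapsto \gamma t$ (at least for $n \geq 3$; the case $n = 2$ requires a small separate argument). Such a scaling realizes exactly the weighted $\G_m$-action carrying $\lexp{\sigma}{c}$ to $c$, which is precisely the statement that $\lexp{\sigma}{c_I}$ and $c_I$ represent the same weighted point — \emph{provided} the isomorphism acts trivially on the target $\PP^1$.

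This last proviso is the main obstacle. A priori the chosen isomorphism may post-compose $f$ with a nontrivial automorphism of the target fixing $f(P) = 0$ and permuting the remaining (at most two) branch points; such a target automorphism alters the coefficients $c_i$, and hence the weighted point, so it must be eliminated. The plan is to show that the isomorphism can always be chosen to act trivially on the target: this automorphism fixes the branch point below $P$ and can only interchange branch points of equal branch index, and I would argue that any such interchange is already realized by a self-isomorphism of $(X,f;P)$ coming from the normalizer of the Galois group, so that composing with it trivializes the target action without disturbing the marking. Once this is settled, the equality of weighted points follows, the chain of implications above yields $f_0 \in F(t)$, and $(\PP^1, f_0; \infty)$ is the desired descent of $(X,f;P)$.
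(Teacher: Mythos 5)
Your proposal is correct and follows the same route as the paper's proof: exhibit the $\Fbar$-isomorphism by the scaling itself, reduce everything to $\lexp{\sigma}{f_0}=f_0$, observe that the source component of any conjugating isomorphism fixes the marked point and normalizes $\langle \zeta_n\rangle$ and is therefore a scaling, and conclude from Galois-equivariance and uniqueness of the normalized representative, using that $I$ generates $\Z$ to force the scaling into $\langle \zeta_n\rangle$. (The separate argument you anticipate for $n=2$ is not needed: for every $n\geq 2$ an element normalizing $\langle \zeta_n\rangle$ must permute its fixed locus $\{0,\infty\}$ in $\PP^1$, so if it also fixes the marked point it is a scaling.)

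Where you go beyond the paper is in confronting the target component, and there your instinct is sound: the paper's proof simply asserts that any isomorphism between $\lexp{\sigma}{f_0}$ and $f_0$ ``is given by a scaling,'' tacitly taking the isomorphism to act trivially on the target $\PP^1$, which is precisely what the lax notion does not guarantee. Your repair works and can be completed in two lines: if $e_1=e_\infty$ and $v$ is the unique automorphism of $\PP^1$ fixing $0$ and swapping $1,\infty$, then $v\circ f$ is Galois with the same group and the same branch data as $f$, hence isomorphic to $f$ as a cover of the marked line (this uniqueness is recorded later in the appendix), say $f u_0 = v f$; since $f(u_0(P))=v(0)=0$ and $G$ is transitive on $f^{-1}(0)$, replacing $u_0$ by $gu_0$ for suitable $g\in G$ yields a self-isomorphism $(u,v)$ of $(X,f;P)$ with $u(P)=P$, and composing the conjugating isomorphism with its inverse trivializes the target. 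One caveat you should make explicit: this reduction, and indeed the truth of the proposition, requires the two unmarked branch points to be individually Galois-stable (say at $1$ and $\infty$), which is what licenses your phrase ``permuting the remaining branch points.'' In full lax generality the statement fails: for $f(t)=2t^n/(\sqrt{d}\,t^n-1)^2$ with $P$ the point $t=0$ and $d\in\Q^\times$ a nonsquare, the pair $(t,w)\mapsto (\beta t,-w)$ with $\beta^n=-1$ identifies $\lexp{\sigma}{(X,f;P)}$ with $(X,f;P)$ whenever $\sigma(\sqrt{d})=-\sqrt{d}$, so the lax field of moduli is $\Q$; yet $c_2/c_1^2=\sqrt{d}$ is invariant under every rescaling $t\mapsto \alpha t$, so no $f(\alpha t)$ has coefficients in $\Q$. (This is just the paper's dihedral twist with the marked fiber moved over $0$.) So the field-of-moduli hypothesis must be read, as both your argument and the paper's proof implicitly do, relative to isomorphisms whose target component preserves the set $\{0,1,\infty\}$; with that reading your write-up is, if anything, more complete than the published proof.
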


\begin{proof}
  Any isomorphism between $f_0$ and its conjugate $\lexp{\sigma}{f_0}$
  normalizes the group of automorphisms generated by $(x:z) \mapsto (\zeta_n
  x:z)$ and fixes $(1:0) \in \PP^1$. Therefore it is given by a scaling $t
  \mapsto \alpha_{\sigma} t$.  The subset $c_I$ is defined over $F$, and in
  particular it is fixed by Galois conjugation. By uniqueness of the normalized
  representative, we therefore see that $\alpha_{\sigma}$ is a power of
  $\zeta_n$, and in particular that $f_0$ is stable under conjugation by
  $\sigma$. Since $\sigma$ was arbitrary, we see that indeed $f_0$ is defined
  over $F$.
\end{proof}

\begin{rmk}
  Note that contrary to the descent obtained in the proof of Theorem
  \ref{thm:BirchExp} the descent constructed in Proposition \ref{prop:descg0}
  may not give rise to an $F$-rational branch. If desired, such a branch can be
  found by further scaling the $t$-coordinate.
\end{rmk}

\subsection{Explicit descents}

In what follows, we exhibit descents of all the Galois lax \Belyi\ maps of
genus $0$, ordered by their automorphism group $G$. As mentioned above, $G$ is
then one of the groups $C_n, D_n, A_4, S_4, A_5$. Given such a group $G$, let
$f : X \to \PP^1$ be a \Belyi\ map with Galois group $G$. Then $f$ branches
over $2$ or $3$ points. The former possibility occurs only for $G = C_n$, and
in this case we slightly abusively add one more point to the set of branch
points, over which only trivial ramification (of index $1$) occurs. By
transivity of $\Aut (\PP^1)$ on triples of points of $\PP^1$, we may assume the
branch points in $\PP^1$ to be $0, 1, \infty$. We denote the uniform
ramification indices of $f$ over these points by $e_0, e_1, e_{\infty}$. Any
two \Belyi\ maps with the same Galois group and the same values of $e_0, e_1,
e_{\infty}$ are in fact isomorphic; we choose an ordering of these values,
which therefore amounts to specifying the map $f$ up to isomorphism.

We now want to determine the corresponding descents $f_0$ of marked lax Belyi
maps. Since $G$ acts transitively on the fibers, it suffices to determine the
descents for any choice of a point $P$ over one of $0, 1, \infty$. The proof of
Proposition \ref{prop:descg0} shows that given such a point it is always
possible to find a descent $f_0$ with the same ramification over $0, 1,
\infty$. We give equations for such descents by indicating triples $p_0, p_1,
p_{\infty}$ with the property that
\begin{equation*}
  p_0 - p_{\infty} = p_1
\end{equation*}
with $p_i (t) \in F [t]$ is an $e_i$-th power. These polynomials describe a
rational function $f_0$ via
\begin{equation*}
  f_0 = \frac{p_0}{p_{\infty}} = 1 + \frac{p_1}{p_{\infty}} .
\end{equation*}
The function $f_0$ is a marked \Belyi\ map defined over $F$, hence in
particular a marked lax \Belyi\ map. We give three such triples $p_0, p_1,
p_{\infty}$ in general, corresponding to the choice of $0, 1, \infty$ over
which we mark a point $P$. Occasionally a descent for one of these choices also
works for others.

In general, more descents $f_0$ exist. In fact the branch point on $\PP^1$ that
is the image of the marked point $P$ will always be rational over the field of
moduli; this was also the reason that we did not consider conics as the target
space in our Definition \ref{def:GenBelyi}. However, the definition of a
\emph{lax} \Belyi\ map allows for the possibility that the other two branch
points are conjugate over $F$. This can occur if these points have the same
branch indices.

We can determine corresponding twists as follows. Suppose for simplicity that
the coinciding ramification indices are $e_0$ and $e_{\infty}$, as indeed we
may up to reordering. Then we can consider the rational function $f_0$ over $F$
with branch points $0, 1, \infty$ constructed above, for a choice of $P$ over
$1$. We can then construct the function $\sqrt{d} (f_0 + 1)/(f_0 - 1)$, which
moves the branch point $1$ to $\infty$ and the pair $0,\infty$ to $\pm
\sqrt{d}$. Normalizing as in Proposition \ref{prop:descg0}, we get twists over
$F$ for all non-zero values of $d$; the individual points of their branch
divisor are defined over the quadratic extension of $F$ determined by $d$.
Having additionally determined such equations, all twists of marked lax \Belyi\
maps can then be recovered, if so desired, by scaling in the $t$-coordinate.
Note that such a scaling changes the field of definition of the branch at the
marked point.

\subsubsection*{Case $G \simeq C_n$}
Here we can take $e_0 = e_{\infty} = n$ and $e_1 = 1$ to get the triple
\begin{equation}
  p_0 = t^n, \:
  p_{\infty} = 1, \:
  p_1 = t^n - 1 .
\end{equation}
The corresponding rational function
\begin{equation}
  f_0 = t^n = 1 + (t^n - 1)
\end{equation}
in fact has rational preimages over all three points $0, 1, \infty$, so that
the given triple works for all markings.

Twisting this map to get it to ramify over $\pm \sqrt{d}$ is actually not so
straightforward as the procedure above, essentially because there is no
branching over $1$, so that we cannot apply our normalization argument.
However, a corresponding lax \Belyi\  map was constructed by
Lercier--Ritzenthaler--Sijsling \cite[Proposition 3.16]{LRS}. 

\subsubsection*{Case $G \simeq D_n$}
Taking $e_0 = e_{\infty} = 2$ and $e_1 = n$ we can use the triples
\begin{equation*}
  p_0 = (t^n + 1)^2 , \:
  p_{\infty} = (t^n - 1)^2 , \:
  p_1 = -4 t^n
\end{equation*}
with corresponding rational function
\begin{equation}\label{eq:Dn}
  f_0 = \frac{(t^n + 1)^2}{(t^n - 1)^2} = 1 + \frac{4 t^n}{(t^n - 1)^2}  .
\end{equation}
This gives marked descents over $\infty$ and $1$ since over these points $f$
has the rational preimages $t = 1$ respectively $t = 0, \infty$.  Exchanging
the roles of $0$ and $\infty$ gives a marked descent over $0$. Twisting gives
the functions
\begin{equation}
  f_0 = \frac{d t^{2n} + 1}{2 t^{n}}
\end{equation}
which indeed branch of index $n$ over $\infty$ and moreover of index $2$ over
$\pm \sqrt{d}$ since
\begin{equation}
  (d t^{2n} + 1) \pm 2 \sqrt{d} t^n = (\pm \sqrt{d} t^n + 1)^2 .
\end{equation}

\subsubsection*{Case $G \simeq A_4$}
In this case we take $e_0 = e_{\infty} = 3$ and $e_1 = 2$. We get a triple
\begin{equation}
  p_0 = \bigl( t(t^3 + 8) \bigr)^3 , \:
  p_{\infty} = 2^6 (t^3 - 1)^3 , \:
  p_1 = (t^6 - 20 t^3 - 8)^2,
\end{equation}
which gives rise to the rational points $t = 0$ over $0$ and $t = \infty$ over
$\infty$. A triple that has the rational point $t = \infty$ over $1$ is given
by
\begin{equation}
  p_0 = (3 t^4 + 6 t^2 - 1)^3 , \:
  p_{\infty} = (3 t^4 - 6 t^2 - 1)^3 , \:
  p_1 = 2^2 3^2 \bigl( t (3 t^4 + 1) \bigr)^2 .
\end{equation}
A corresponding family of twists branching over $\infty$ and $\pm \sqrt{d}$ is
given by
\begin{equation}
  f_0
  =
  \frac{d^3 t^{12} + 99 d^2 t^8 - 297 d t^4 - 27}{18 (t (d t^4 + 3))^2}
  =
  \pm \sqrt{d} + \frac{(d t^4 \mp 6 \sqrt{d} t^2 - 3)^3}{18 (t (d t^4 + 3))^2} .
\end{equation}

\subsubsection*{Case $G \simeq S_4$}
For this case, we take $e_{\infty} = 4$, $e_0 = 3$, $e_1 = 2$. Now the branch
indices are distinct, so we do not get extra twists, but only the three
following triples for rational points over $\infty, 0, 1$ respectively:
\begin{align}
  \begin{split}
    p_0 & = (t^8 + 14 t^4 + 1)^3 \\
    p_{\infty} & = 2^2 3^3 \bigl( t(t^4 - 1) \bigr)^4, \\
    p_1 & = (t^{12} - 33 t^8 - 33 t^4 + 1)^2 , \\
  \end{split}
\end{align}
\begin{align}
  \begin{split}
    p_0 & = 2^8 (t^7 - 7 t^4 - 8 t)^3 \\
    p_{\infty} & = (t^6 + 20 t^3 - 8)^4 \\
    p_1 & = - (t^{12} - 88 t^9 - 704 t^3 - 64)^2 ,
  \end{split}
\end{align}
and
\begin{align}
  \begin{split}
    p_0 & = (3 t^8 + 28 t^6 - 14 t^4 + 28 t^2 + 3)^3 \\
    p_{\infty} & = 3^3 (t^6 - 5 t^4 - 5 t^2 + 1)^4 \\
    p_1 & = 2^4 (9 t^{11} + 11 t^9 + 66 t^7 - 66 t^5 - 11 t^3 - 9 t)^2 .
  \end{split}
\end{align}

\subsubsection*{Case $G \simeq A_5$}
We take $e_{\infty} = 5$, $e_0 = 3$, $e_1 = 2$. Once more the branch triples
are distinct, and the triples with rational points over $\infty, 0, 1$ are
given by
\begin{align}
  \begin{split}
    p_0 & = (t^{20} + 228 t^{15} + 494 t^{10} - 228 t^5 + 1)^3 \\
    p_{\infty} & = 2^6 3^3 \bigl( t(t^{10} - 11 t^5 - 1) \bigr)^5 \\
    p_1 & = (t^{30} - 522 t^{25} - 10005 t^{20} - 10005 t^{10} + 522 t^5 + 1)^2 ,
  \end{split}
\end{align}
\begin{align}
  \begin{split}
    p_0 & = 5^3 \bigl( t (5 t^6 + 5 t^3 + 8) (5 t^6 - 40 t^3 - 1) (40 t^6 - 5 t^3 + 1) \bigr)^3 \\
    p_{\infty} & = 2^6 (25 t^{12} + 275 t^9 - 165 t^6 - 55 t^3 + 1)^5 \\
    p_1 & = - \bigl( (5 t^6 + 1) (25 t^{12} - 1750 t^9 - 2190 t^6 + 350 t^3 + 1) \cdot \\
      & \quad\quad\quad (200 t^{12} - 500 t^9 + 2055 t^6 + 100 t^3 + 8) \bigr)^2 ,
  \end{split}
\end{align}
and finally
\begin{align}
  \begin{split}
    p_0 & = \bigl((3 t^4 + 10 t^2 + 15)(t^8 + 70 t^4 + 25)(t^8 + 60 t^6 - 370 t^4 + 300 t^2 + 25) \bigr)^3 \\
    p_{\infty} & = 3^3 \bigl((t^4 - 2 t^2 + 5)(t^8 - 20 t^6 - 210 t^4 - 100 t^2 + 25)\bigr)^5 \\
    p_1 & = 2^2 \bigl(t(t^4 - 5)(t^4 - 10 t^2 + 45)(9 t^4 - 10 t^2 + 5) \cdot \\
      & \quad\quad\quad (t^8 + 20 t^6 + 470 t^4 + 500 t^2 + 625) (5 t^8 + 20 t^6 + 94 t^4 + 20 t^2 + 5)\bigr)^2 .
  \end{split}
\end{align}

The above method extends to curves and \Belyi\ maps of genus $0$ with unipotent
automorphism group, and to higher genus (hyperelliptic or not).  We expect this
will be useful in building databases of \Belyi\ maps with small defining
equations, a topic for future work.

\end{document}